\font\tenmsb=msbm10    \textfont\msbfam=\tenmsb \font\sevenmsb=msbm7
\font\fivemsb=msbm5
\def\Bbb{\fam\msbfam \tenmsb}
\font\tenbig=msbm10 scaled \magstep2   \textfont\bigfam=\tenbig
\font\sevenbig=msbm7 scaled \magstep2 \scriptfont\bigfam=\sevenbig
\font\fivebig=msbm5 scaled \magstep2
\def\beqn{\begin{eqnarray}}
\def\eeqn{\end{eqnarray}}
\def\beqnn{\begin{eqnarray*}}
\def\eeqnn{\end{eqnarray*}}
\numberwithin{equation}{section}
\newtheorem{thm}{\hskip\parindent {Theorem}}[section]
\newtheorem{lem}{\hskip\parindent {Lemma}}[section]
\newtheorem{defn}{\hskip\parindent {Definition}}[section]
\newtheorem{cor}{\hskip\parindent {Corollary}}[section]
\newtheorem{rem}{\hspace\parindent{Remark}}[section]
\newtheorem{pro}{\hskip\parindent{Proposition}}[section]
\begin{document}
\baselineskip=15pt  \parskip=1pt

\begin{center}
\vspace{1.6cm}
{\Large \bf Riesz transforms associated with higher-order \\Schr\"odinger type operators }


\vspace{0.5cm}

{Qingquan\ Deng,\ Yong\ Ding,\ Xiaohua\ Yao }

{\linespread{1.1}
\renewcommand{\thefootnote}{}

\footnote{2000 \emph{Mathematics Subject Classification}: 42B30;
42B25; 42B35; 35J15.}

\footnote{\emph{Key words and phrases}: Higher order Schr\"odinger
operator;  Off-diagonal estimates; $L^p$-$L^q$ estimates; Riesz transform; }

\footnote{The first author is supported by NSFC
(No. 11301203) and the Fundamental Research Funds for the Central Universities
(CCNU-14A05037). The second author is supported by NSFC (No.11371057, No.11471033, No.11571160), SRFDP (No.20130003110003) and the Fundamental Research Funds for the Central Universities (No.2014KJJCA10). The third author is supported by NSFC (No. 11371158)
and the program for Changjiang Scholars and Innovative Research Team in University
(No. IRT13066).}}
\end{center}

\begin{center}{\bf Abstract}\\
\end{center}

In this paper, let $L=L_{0}+V$ be a Schr\"{o}dinger type operator where $L_{0}$ is higher order elliptic operator with complex coefficients in divergence form and $V$ is signed measurable function, under the strongly subcritical assumption on $V$, the authors study the $L^{q}$ boundedness of Riesz transforms $\nabla^{m}L^{-1/2}$ for $q\leq 2$ and obtain a sharp result.
Furthermore, the authors impose extra regularity assumptions  on $V$ to obtain the $L^{q}$ boundedness of Riesz transforms $\nabla^{m}L^{-1/2}$ for $q>2$. As an application, the main results can be applied to the operator $L=(-\Delta)^{m}-\gamma|x|^{-2m}$ for suitable $\gamma$.


 \tableofcontents

 \pagestyle{plain} 
 \renewcommand{\thepage}{\arabic{page}}
 \setcounter{page}{1} 
 \setcounter{equation}{0}

\section{Introduction}
Let $m\ge2,\ N\ge1$  and  $L_{0}$ be the following homogeneous elliptic operator of order $2m$ in divergence form
\begin{eqnarray}\label{divergence}
L_{0}f=\sum_{|\alpha|=|\beta|=m}\partial^{\alpha}(a_{\alpha,\beta}
\partial^{\beta}f)
\end{eqnarray}
which is interpreted via the following sesquilinear form
\begin{eqnarray}\label{Q_0 form}
\mathcal {Q}_{0}(f,g):=\sum_{|\alpha|=|\beta|=m}\int_{\mathbb{R}^{N}}a_{\alpha,\beta}(x)
\partial^{\beta}f(x)\overline{\partial^{\alpha}g(x)}dx, \ \ f,g\in W^{m,2}(\mathbb{R}^{N}),
\end{eqnarray}
where $W^{m,2}(\mathbb{R}^{N})$ is the Sobolev spaces and  for all $\alpha,\beta\in\mathbb{N}^{N}$ with $|\alpha|=|\beta|=m$,  $a_{\alpha,\beta}\in L^{\infty}(\mathbb{R}^{N},\mathbb{C})$ satisfying $a_{\alpha,\beta}=\overline{a_{\beta,\alpha}}$
and the strong  G{\aa}rding inequality
\begin{equation} \label{strong}
\begin{split}
\Lambda\|\nabla^{m}f\|^{2}_{L^{2}}\geq\sum_{|\alpha|=|\beta|=m}\int_{\mathbb{R}^{N}}a_{\alpha,\beta}(x)
\partial^{\beta}f(x)\overline{\partial^{\alpha}f(x)}dx\geq\rho\|\nabla^{m}f\|^{2}_{L^{2}},
\end{split}
\end{equation}
for some $\Lambda>\rho>0$ independent of $f\in
W^{m,2}(\mathbb{R}^{N})$. Assume that $V\in L^{1}_{\rm loc}(\mathbb{R}^{N})$ satisfies the strongly subcritical condition, that is,
 \begin{eqnarray}\label{condition}
\int_{\mathbb{R}^N}V_-|f|^2dx\le \mu\Big{(}\mathcal {Q}_{0}(f,f)+\int_{\mathbb{R}^N}V_+|f|^2dx \Big{)},
\end{eqnarray}
for some $\mu\in [0,1)$ and all $f\in W^{m,2}(\mathbb{R}^{N})$ with $\int_{\mathbb{R}^N}V_+|f|^2dx<\infty$, where
 $V_{\pm}(x):=\max\{0,\pm V(x)\}$ denote the positive and negative parts of $V$, respectively.
It follows from the KLMN theorem  (e.g. see Kato \cite[p.338]{Ka}) that $L$ is well-defined as a nonnegative self-adjoint operator associated to the  sesquilinear form
\begin{eqnarray}\label{form}
\mathcal{Q}(f,g):=\mathcal {Q}_{0}(f,g)+\int_{\mathbb{R}^{N}}V_+f\overline{g}dx-\int_{\mathbb{R}^{N}}V_-f\overline{g}dx
\end{eqnarray}
on the domain
$$D(\mathcal{Q})=\Big{\{}f\in W^{m,2}(\mathbb{R}^{N});\ \ \int_{\mathbb{R}^{N}}V_+|f|^2dx<\infty\Big{\}}.$$
For the sake of simplicity, we denote $\mathscr{A}_{m}(\rho,\Lambda,\mu)$ by the form sum $L=L_{0}+V$ associated to  $\mathcal{Q}$ defined as above and $\mathscr{A}_{m}=\bigcup_{\rho,\Lambda,\mu}\mathscr{A}_{m}(\rho,\Lambda,\mu)$ where $\mu\in[0,1)$ is understood the smallest constant such that (\ref{condition}) holds.

The famous Kato square root conjecture for second and higher elliptic operators $L$ with complex coefficients in divergence form has been proved by Auscher et al. (see \cite{A-H-M-T} \cite{ASLMT}), since then the $L^{q}$ theory for square root which consist in comparing $L^{1/2}f$ and $\nabla^{m} f$ ($m=1,2,\ldots,$) in $L^{p}$ norms have been extensively studied (see \cite{A2004} \cite{A2007} \cite{H-M03} and so on). There are two faces here, the
$L^{q}$ boundedness of Riesz transform, that is an inequality $\|\nabla^{m} f\|_{L^{q}}\leq C\|L^{1/2}f\|_{L^{q}}$, and its reverse $\|L^{1/2} f\|_{L^{q}}\leq C\|\nabla^{m}f\|_{L^{q}}$.

This paper is devoted to the study of $L^{q}$ boundedness of the Riesz transform associated to higher order Schr\"{o}dinger type operators $L=L_{0}+V\in\mathscr{A}_{m}$ where $L_{0}$ is a homogeneous elliptic operator of order $2m$ in divergence form.
In fact, the Riesz transform associated to $L\in \mathscr{A}_{m}$ with $V=0$ has been profoundly studied. Specifically, it follows from the classic Calder\'{o}n-Zygmund theory that the Riesz transform $\nabla^{m}L^{-1/2}_{0}$ ($m\geq1$) is bounded on $L^{q}(\mathbb{R}^{N})$ for all $1<q<\infty$ if $L_{0}$ is a  homogeneous elliptic operator of order $2m$  with smooth coefficients, in particular, $L_{0}=(-\Delta)^{m}$.
When $L_{0}$ is defined by $(\ref{divergence})$ with rough coefficients,
by using the off-diagonal estimate of $e^{-tL_{0}}$ and generalized Calder\'{o}n-Zygmund theory, the Riesz transform $\nabla^{m}L^{-1/2}_{0}$ is bounded on $L^{q}$ for all $q\in ((\frac{2N}{N+2m}-\varepsilon)\vee1, 2+\varepsilon')$ if $N\neq1$ and $q\in (1, \infty)$ if $N=1$ where $a\vee b=\max\{a,\ b\}$ and $\varepsilon,\varepsilon'$ are positive numbers depending on $L_{0}$ (see e.g. \cite{A2007} \cite{B-K04-1}).

When $V\neq0$, the study of Riesz transform $\nabla^{m}L^{-1/2}$ would be more complicated and may depends both $L_{0}$
 and $V$. Nevertheless, for the classic Schr\"{o}dinger operators $L=-\Delta+V$, there exist many interesting results on their associated Riesz transforms, which are essentially related to the properties of $e^{-tL}$. If $V>0$,
 based on the positivity and the Gaussian estimate of the kernel of $e^{-tL}$, the Riesz transform $\nabla L^{-1/2}$ has been extensively studied under some further assumptions on $V$ (see  \cite{A-A} \cite{D-O-Y} \cite{Ou} \cite{Sh95} \cite{Si} \cite{S-T} \cite{U-Z} \cite{Zh}).
 If $V$ is a signed potential satisfying strongly subcritical condition, notice that the kernel of $e^{-tL}$ may no longer positive and the Gaussian estimate may fails (see \cite{L-S-V}), Assaad \cite{As} and Assaad and Ouhabaz \cite{A-O} applied the $L^{p}$ theory and the off-diagonal estimates of $e^{-tL}$ to study the $L^{q}$ boundedness of $\nabla L^{-1/2}$. However, for the higher order Schr\"{o}dinger type operators $L=L_{0}+V$, we are aware that it is quite different  from classic Schr\"{o}dinger operators, even if $L_{0}=P(D)$ is a homogeneous elliptic operator of order $2m$ ($m\geq2$) with real constant coefficients and $V\geq0$. For
instance, the $e^{-tP(D)}(t\neq0)$ is not preserving-positivity
operator and also not a contractive one on $L^{p}(\mathbb{R}^{N})$
$(p\neq2)$ (see e.g.  Langer and Mazya \cite{L-M}), which makes
rather difficult to use famous Trotter formula of semigroup and
connection to probability. Moreover, these differences may also suggest that the study of Riesz transform $\nabla^{m}L^{-1/2}$ would be harder than $\nabla(-\Delta+V)^{-1/2}$.

Let $L=L_{0}+V\in\mathscr{A}_{m}$ be the non-negative self-adjoint operator associated to the  sesquilinear form $\mathcal{Q}$ in (\ref{form}), it is easy to see  $e^{-tL}$ is well-defined on $L^{2}(\mathbb{R}^{N})$ and
\begin{eqnarray}\label{Riesz}
\nabla^{m}L^{-1/2}f=c_{m}\int^{\infty}_{0}\sqrt{t}\nabla^{m}e^{-tL}f\frac{dt}{t},
\end{eqnarray}
which connect the Riesz transform $\nabla^{m}L^{-1/2}$ with semigroup $e^{-tL}$, hence the analysis of semigroup $e^{-tL}$  plays an important role in the study of Riesz transform.
In fact, as we mentioned before, the properties of semigroup $e^{-tL}$ such as the positivity, Gaussian estimates and off-diagonal estimates are all involved.
Here for $L\in \mathscr{A}_{m}$, it is well known that the kernel of $e^{-tL}$ is not likely to be positive and to have the Gaussian upper bound for $N>2m$.
Fortunately, we can obtain the off-diagonal estimates and $L^{p}$ theory for $e^{-tL}$ by observing a  invariant property of  $\mathscr{A}_{m}(\rho,\Lambda,\mu)$ (see Section \ref{sec2} below), which combined the generalized Calder\'{o}n-Zygmund theory to show the $L^{q}$ boundedness of  $\nabla^{m}L^{-1/2}$. We need to emphasize that the methods and results in studying the $L^{q}$ boundedness of $\nabla^{m}L^{-1/2}$ has twofold for $q\leq2$ and $q>2$, respectively, our paper hence will organized accordingly. First of all, for $q\leq2$, the results are addressed as follows:

\begin{thm}\label{thm4}
Assume that $m\geq2$, $\mu\in[0,1)$ and $L=L_{0}+V\in \mathscr{A}_{m}$ where $L_{0}$ is a homogeneous elliptic operator of order 2m in divergence form.

{\rm(i)}\ The Riesz transform $\nabla^{m}L^{-1/2}$ is bounded on $L^{q}(\mathbb{R}^{N})$ for all $q\in (\frac{2N}{N+2m}\vee1,2]$.

{\rm(ii)}\ The Riesz transform $\nabla^{m}L^{-1/2}$ is of weak type $(\frac{2N}{N+2m}\vee1, \frac{2N}{N+2m}\vee1)$ when $N\neq2m$.
\end{thm}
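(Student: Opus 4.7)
The plan is to reduce Theorem~\ref{thm4} to off-diagonal estimates for the semigroup $e^{-tL}$ and its spatial derivatives, and then to invoke a generalized Calderón--Zygmund extrapolation principle in the spirit of Blunck--Kunstmann / Auscher. The $L^{2}$ endpoint is essentially free: since $L$ is the nonnegative self-adjoint operator associated with $\mathcal{Q}$, the spectral theorem gives $\|L^{1/2}f\|_{L^{2}}^{2}=\mathcal{Q}(f,f)$, and combining (\ref{strong}) with (\ref{condition}) and absorbing the negative part yields
$$
\rho(1-\mu)\|\nabla^{m}f\|_{L^{2}}^{2}\le \mathcal{Q}(f,f)=\|L^{1/2}f\|_{L^{2}}^{2},
$$
so setting $f=L^{-1/2}g$ proves the $L^{2}$-boundedness of $\nabla^{m}L^{-1/2}$.

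Next, I would exploit the invariance of the class $\mathscr{A}_{m}(\rho,\Lambda,\mu)$ under a suitable conjugation (the analogue, for signed $V$, of the stability property already used when $V=0$) to obtain $L^{p}$--$L^{2}$ off-diagonal estimates of Davies--Gaffney type for both $e^{-tL}$ and $t^{1/2}\nabla^{m}e^{-tL}$: for disjoint closed sets $E,F$, $t>0$, and $p\in(p_{0},2]$ with $p_{0}:=\frac{2N}{N+2m}\vee 1$,
$$
\|t^{1/2}\nabla^{m}e^{-tL}f\|_{L^{2}(E)}\le C\,t^{-\frac{N}{2m}(\frac{1}{p}-\frac{1}{2})}\exp\!\Bigl(-c\bigl(d(E,F)^{2m}/t\bigr)^{1/(2m-1)}\Bigr)\|f\|_{L^{p}(F)}.
$$
These follow by bootstrapping an $L^{2}$--$L^{2}$ off-diagonal bound (produced via the Davies perturbation trick, combined with the strongly subcritical absorption of $V_{-}$) against the Sobolev / Gagliardo--Nirenberg inequality for the form domain of $L$; the sharpness of $p_{0}$ comes from the Sobolev exponent at order $m$.

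For part~(i), writing $\nabla^{m}L^{-1/2}$ via the representation (\ref{Riesz}) and splitting $t^{1/2}\nabla^{m}e^{-tL}$ at scale $t^{1/(2m)}$ into a local and a global part relative to each ball, I would verify the abstract hypothesis $(H_{p})$ of the generalized Calderón--Zygmund extrapolation theorem adapted to order-$2m$ operators, which produces $L^{q}$-boundedness for every $q\in(p_{0},2]$. For part~(ii) I would run a Calderón--Zygmund decomposition adapted to $L$: at height $\lambda$, write $f=g+\sum_{i}b_{i}$ with $g\in L^{2}$ and each $b_{i}=(I-e^{-r_{i}^{2m}L})^{k}h_{i}$ supported in a Whitney cube of side $r_{i}$, choosing $k$ large enough that all tail sums converge. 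The good part is controlled by the $L^{2}$ bound, while the bad part is handled by iterating the above off-diagonal estimates. The hypothesis $N\neq 2m$ rules out the borderline regime where $p_{0}$ degenerates exactly to $1$ and the Whitney decomposition loses its scaling.

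The main obstacle will be producing the $L^{p_{0}}$--$L^{2}$ off-diagonal estimate for $t^{1/2}\nabla^{m}e^{-tL}$ in the presence of a genuinely signed potential: positivity and Gaussian upper bounds on the kernel of $e^{-tL}$ fail when $N>2m$, so one must commute $\nabla^{m}$ through the Davies exponential weight while simultaneously absorbing $V_{-}$ using the constant $\mu<1$, and the sharp exponent $p_{0}=\frac{2N}{N+2m}\vee 1$ leaves essentially no slack. Once this key off-diagonal estimate is secured, the rest of the argument proceeds along standard templates in the generalized Calderón--Zygmund theory.
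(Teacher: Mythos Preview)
Your overall strategy---$L^{2}$ boundedness from the form inequality, $L^{p}$--$L^{2}$ off-diagonal estimates for $e^{-tL}$ via Davies perturbation plus Sobolev embedding, and then the Blunck--Kunstmann/Auscher extrapolation---is exactly the paper's approach. Two corrections, though.

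First, you do not need a separate Calder\'on--Zygmund argument for part~(ii): the abstract extrapolation lemma you invoke for (i) (the paper's Lemma~\ref{le-1}) already yields the weak type $(p_{0},p_{0})$ conclusion together with the strong type on $(p_{0},2]$, once you verify the two local conditions for $A_{r}=\textbf{I}-(\textbf{I}-e^{-r^{2m}L})^{M}$. Your proposed decomposition $b_{i}=(I-e^{-r_{i}^{2m}L})^{k}h_{i}$ ``supported in a Whitney cube'' is in fact incorrect as stated, since $(I-e^{-r_{i}^{2m}L})^{k}h_{i}$ is not compactly supported; the right maneuver (built into Lemma~\ref{le-1}) keeps the ordinary atoms $h_{i}$ and uses the off-diagonal decay of $T(\textbf{I}-A_{r})$ and of $A_{r}$ instead.

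Second, your explanation of the restriction $N\neq 2m$ is off. The Whitney machinery does not break down; what fails is the availability of the endpoint off-diagonal estimate. The Sobolev/Gagliardo--Nirenberg step produces $L^{p}$--$L^{2}$ off-diagonal bounds for $p$ in $I'_{m,N}$, which equals $[\tfrac{2N}{N+2m},2]$ if $N>2m$, $[1,2]$ if $N<2m$, but only the half-open $(1,2]$ when $N=2m$. Thus precisely at $N=2m$ the exponent $p_{0}=1$ is excluded, and the weak $(1,1)$ conclusion is not reached.
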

\vskip0.3cm

\begin{rem}\label{rem1-2}{\rm
(i)\ When $N\leq2m$, it is easy to see that the lower bound of the interval $(1,2]$ in Theorem \ref{thm4} is optimal. However, it is still unknown to us if the upper bound $2$ is sharp in the following sense: for given $q>2$, there exist  $L\in\mathscr{A}_{m}$ such that the Riesz transforms $\nabla^{m}L^{-1/2}$ is not bounded on $L^{q}(\mathbb{R}^{N})$.

(ii)\ When $N>2m\geq4$, the interval $(\frac{2N}{N+2m},2]$ in Theorem \ref{thm4} exists uniformly in $L\in \mathscr{A}_{m}$. Moreover, we will prove that $(\frac{2N}{N+2m},2]$ is sharp in the following sense: for given $q<\frac{2N}{N+2m}$ or $q>2$, there exist $L\in\mathscr{A}_{m}$ such that the Riesz transforms $\nabla^{m}L^{-1/2}$ is not bounded on $L^{q}(\mathbb{R}^{N})$, see Theorems \ref{sharp-1} below.


}
\end{rem}
\vskip 0.2cm

The $L^{q}$ boundedness of $\nabla^{m}L^{-1/2}$ for $q>2$ is subtle and dramatically different. We notice that if $N>2m$, the interval  $(\frac{2N}{N+2m},2]$ is optimal with respect to the class  $\mathscr{A}_{m}$ in Theorem \ref{thm4}. On the other hand, even for classic Schr\"{o}dinger operator $L=-\Delta+V$ with $V\geq0$, Shen's counterexample in \cite{Sh95} showed that extra regularity conditions on $V$ are necessary if one wants to the $L^q$ boundedness of $\nabla L^{-1/2}$ for $q>2$. In fact,  one can see \cite{A-A} and \cite{Sh95} for $V$ belonging to reverse H\"{o}lder class,  \cite{U-Z} for $V$ satisfying  Fefferman condition and \cite{As} for signed subcritical potential $V$ belonging to Kato class.

It is well known that the weak space $L^{\frac{N}{2},\infty}$ plays an important role in many studies of classic  Schr\"{o}dinger operators with critical potentials (see \cite{M-B,M-G}), where $L^{r,\infty}$ ($1\leq
r<\infty$) denote the weak $L^{r}(\mathbb{R}^{n})$ spaces, i.e.,
$$L^{r,\infty}=\big\{f:\ \|f\|_{L^{r,\infty}}=\sup_{\gamma>0}\gamma |\{x\in\mathbb{R}^{N};
|f(x)|>\gamma\}^{\frac{1}{r}}|<\infty\big\}.$$
A typical potential is the inverse square potential $V(x)=-c|x|^{-2}\in L^{\frac{n}{2},\infty}$, which is  widely studied in modern mathematical physics
and quantum mechanics (see for example \cite{B-P-S-T,RS2,R-S78,Sc,V-Z} and references therein). In remaining part of this paper, we are mainly devoted to the $L^q $ boundedness of  the Riesz transform $\nabla^m
L^{-1/2}$  for  $q>2$ with  potential $V$ belonging to $L^{\frac{N}{2m},\infty}(\mathbb{R}^{N})$.


\begin{thm}\label{thm5}
Let $m\geq2$, $\Lambda>\rho>0$, $\mu\in(0,1)$ and $N>2m$. Assume that $L=L_{0}+V\in \mathscr{A}_{m}(\rho,\Lambda,\mu)$ where $L_{0}$ is a homogeneous elliptic operator of order 2m in divergence form and $V\in L^{\frac{N}{2m},\infty}(\mathbb{R}^{N})$. Then the Riesz transform $\nabla^{m}L^{-1/2}$ is bounded on $L^{q}(\mathbb{R}^{N})$ for $q\in(\frac{(2+\varepsilon)N}{(1+\varepsilon)N+(2+\varepsilon)m},2+\varepsilon)$ where $\varepsilon$ is a positive constant.
\end{thm}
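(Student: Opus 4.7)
The plan is to combine the extrapolation theorem of Auscher \cite{A2007} for $q>2$ with a perturbative treatment of the potential $V\in L^{N/(2m),\infty}(\mathbb{R}^{N})$. Since Theorem \ref{thm4} already supplies the $L^{2}$ boundedness of $\nabla^{m}L^{-1/2}$, the $L^{q}$ boundedness for $q\in(2,2+\varepsilon)$ will follow once one establishes, for some $\varepsilon>0$ depending only on $\rho,\Lambda,\mu$ and $\|V\|_{L^{N/(2m),\infty}}$, an $(L^{2},L^{2+\varepsilon})$ off-diagonal estimate of the form
\begin{eqnarray*}
\|\sqrt{t}\,\nabla^{m}e^{-tL}(\mathbf{1}_{E}f)\|_{L^{2+\varepsilon}(F)}\leq C\,t^{-\frac{N}{2m}(\frac{1}{2}-\frac{1}{2+\varepsilon})}\,e^{-c(d(E,F)^{2m}/t)^{1/(2m-1)}}\,\|f\|_{L^{2}(E)},
\end{eqnarray*}
for all closed sets $E,F\subset\mathbb{R}^{N}$ and all $t>0$. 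The lower endpoint $\tfrac{(2+\varepsilon)N}{(1+\varepsilon)N+(2+\varepsilon)m}$ is then obtained by rerunning the proof of Theorem \ref{thm4}(i) with the improved exponent pair $(2,2+\varepsilon)$ in place of $(\tfrac{2N}{N+2m}\vee 1,\,2)$; the new endpoint is exactly $\bigl(\tfrac{1}{(2+\varepsilon)'}+\tfrac{m}{N}\bigr)^{-1}$ as dictated by the associated Sobolev embedding.

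The off-diagonal estimate itself is produced by perturbation from $L_{0}$ via the Duhamel identity
\begin{eqnarray*}
e^{-tL}=e^{-tL_{0}}-\int_{0}^{t}e^{-(t-s)L_{0}}\,V\,e^{-sL}\,ds,
\end{eqnarray*}
together with the H\"older inequality in Lorentz spaces
\begin{eqnarray*}
\|Vg\|_{L^{r,2}}\leq C\,\|V\|_{L^{N/(2m),\infty}}\,\|g\|_{L^{s,2}},\qquad \tfrac{1}{r}=\tfrac{1}{s}+\tfrac{2m}{N}.
\end{eqnarray*}
Because the Sobolev shift $\tfrac{1}{r}-\tfrac{1}{s}=\tfrac{2m}{N}$ is precisely the gain produced by one factor of $e^{-tL_{0}}$, the convolution in $s$ inherits the same homogeneity as $e^{-tL_{0}}$ itself, and a Neumann series argument in the appropriate $(L^{p},L^{q})$ scale produces off-diagonal bounds for $e^{-tL}$ throughout the range $\bigl((\tfrac{2N}{N+2m}-\varepsilon_{0})\vee 1,\,2+\varepsilon_{0}\bigr)$ available for $L_{0}$ from \cite{A2007,B-K04-1}. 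Differentiating the same identity and invoking the known gradient bounds for $e^{-tL_{0}}$ then yields the desired estimate for $\sqrt{t}\,\nabla^{m}e^{-tL}$.

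The main obstacle will be preserving the Gaussian-type tail $\exp\bigl(-c(d(E,F)^{2m}/t)^{1/(2m-1)}\bigr)$ through the composition with $V$: because $V$ lies only in a weak Lorentz space and admits no pointwise localization, one must split $V=V\mathbf{1}_{A}+V\mathbf{1}_{A^{c}}$ along an intermediate annulus $A$ between $E$ and $F$, control the far part by the off-diagonal decay of both semigroup factors, and absorb the near part by the smallness of $\|V\mathbf{1}_{A}\|_{L^{N/(2m),\infty}}$ on shrinking annuli; here the strong subcriticality hypothesis $\mu<1$ enters essentially, to guarantee that the Neumann series converges uniformly in the localization parameter. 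Once this bookkeeping is carried out, the final assembly via Auscher's extrapolation and the improved Theorem \ref{thm4}-type argument is routine.
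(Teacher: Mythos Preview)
Your approach differs substantially from the paper's, and it contains a genuine gap. The paper does not use Duhamel's formula or a Neumann series for Theorem~\ref{thm5}. Instead, it shows that $L+\mathbf{I}$ is bounded from $W^{m,p}(\mathbb{R}^{N})$ to $W^{-m,p}(\mathbb{R}^{N})$ for all $1<p<\infty$ (using only $V\in L^{N/(2m),\infty}$ and the weak H\"older inequality) and invertible at $p=2$ (using the form coercivity coming from $\mu<1$). Sneiberg's extrapolation lemma then gives invertibility for $|1/p-1/2|<|1/2-1/r_{0}|$ for some $r_{0}<2$; combined with a scaling argument this yields the $L^{2}$--$L^{p}$ estimates for $\sqrt{t}\,\nabla^{m}e^{-tL}$ for $p\in(2,2+\varepsilon)$ (Lemma~\ref{le7}, Lemma~\ref{le8}, Theorem~\ref{pro5-1}). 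The passage to boundedness of the Riesz transform for $q>2$ then goes through Proposition~\ref{le52} and Proposition~\ref{pro31}(ii), and the lower endpoint through Lemma~\ref{le005}, duality, and Proposition~\ref{prop-2}, essentially as you describe.

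The problem with your Neumann-series route is that its convergence requires an operator-norm smallness condition, in effect $\|V\|_{L^{N/(2m),\infty}}<\delta$, which is \emph{not} assumed in Theorem~\ref{thm5}. This is precisely why the paper reserves the series argument for Section~\ref{sec4-3} (Theorem~\ref{ethm2-2-0}), where the hypothesis $\|V\|_{L^{N/(2m),\infty}}\le\delta_{q_{0}}$ is explicitly imposed. Your proposed remedy---localizing $V$ to shrinking annuli and invoking $\mu<1$---does not close the gap. First, functions in the weak space $L^{N/(2m),\infty}$ need not have small norm on sets of small measure: the model potential $|x|^{-2m}$ has the same weak norm on every ball centered at the origin, so $\|V\mathbf{1}_{A}\|_{L^{N/(2m),\infty}}$ cannot be made small by shrinking $A$. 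Second, the subcriticality condition $\mu<1$ controls only the quadratic form $\int V_{-}|f|^{2}$ relative to $\mathcal{Q}_{0}(f,f)+\int V_{+}|f|^{2}$; it says nothing about $\|V\|_{L^{N/(2m),\infty}}$ (which can be arbitrarily large, for instance when $V_{+}$ is large) or about the $L^{p}\to L^{q}$ operator norms that govern the series. The Sneiberg argument sidesteps all of this because it is an openness statement: boundedness on the whole scale together with invertibility at a single point automatically propagates invertibility to a neighborhood, with no smallness of $V$ required---only that $\varepsilon$ may depend on $\|V\|_{L^{N/(2m),\infty}}$.
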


\begin{rem}\label{rem1-1}{\rm
(i)\  When $N>2m$, $V\in L^{\frac{N}{2m},\infty}(\mathbb{R}^{N})$ and $L_{0}$ is defined by (\ref{divergence}). By a suitable choice of $\|V\|_{L^{\frac{N}{2m},\infty}}$, we have that $L=L_{0}+V\in \mathscr{A}_{m}(\rho,\Lambda,\mu)$ for some $\Lambda>\rho>0$ and $\mu\in[0,1)$ automatically (see (ii) of Remark \ref{rem4-2}).

(ii)\ Let $N>2m\geq4$ and $L=(-\Delta)^{m}-\gamma|x|^{-2m}$. It follows from  Davies and Hinz \cite{D-H} that
\begin{eqnarray}\label{H-R}
\||x|^{-m}f\|^{2}_{L^{2}}\leq \kappa(m,N)\|\nabla^{m}f\|^{2}_{L^{2}}.
\end{eqnarray}
Thus if $0<\gamma<\frac{\rho}{\kappa(m,N)}$,  we have that  $L\in\mathscr{A}_{m}(\rho,\Lambda,\mu)$ with $0<\mu<\frac{\gamma \kappa(m,N)}{\rho}$ and $\nabla^{m}L^{-1/2}$ is bounded on $L^{q}(\mathbb{R}^{N})$ for all $q\in(\frac{(2+\varepsilon)N}{(1+\varepsilon)N+(2+\varepsilon)m},2+\varepsilon)$ where $\Lambda, \rho$ are constants appeared in (\ref{strong}) with $L_{0}=(-\Delta)^{m}$.
}
\end{rem}

Notice that by Theorem \ref{thm5}, we only obtain the constant $\varepsilon$ above is small and cannot be expressed explicitly for  $L=L_{0}+V\in \mathscr{A}_{m}(\rho,\Lambda,\mu)$ and
$V\in L^{\frac{N}{2m},\infty}(\mathbb{R}^{N})$, it is also of great interests to know how far we could push the upper bound. On the other hand, inspired by \cite{D-D-Y2} which deals with the classic Schr\"{o}dinger operator, it is possible to
 obtain the $L^q$ boundedness of $\nabla^m L^{-1/2}$ for $q$ lying in a larger and explicit interval if potential $V$ has extra regularity conditions. Therefore, we introduce such conditions for higher order Schr\"{o}dinger operators. Precisely, we assume
there exist a constant $q_{0}>2$ such that
\begin{eqnarray}
(A_{1}):\ \ \ \ \ \ \ \ \ \|\nabla^{m}L^{-1/2}_{0}\|_{L^{q_{0}}-L^{q_{0}}}<\infty,\nonumber
\end{eqnarray}
and
\begin{eqnarray}
(A_{2}):\ \ \ \ \ \ \ \ \ \|Vf\|_{L^{q_{0}}}\leq a\|L_{0}f\|_{L^{q_{0}}}+b\|f\|_{L^{q_{0}}},
\ \ \ f\in D_{q_{0}}(L_{0})\nonumber
\end{eqnarray}
with some constant $a,b>0$, where $L_{0}$ is defined by (\ref{divergence}) and $D_{q_{0}}(L_{0})$ is the domain of $L_{0}$ on $L^{q_{0}}$.
Then we present our result as follows.

\begin{thm} \label{ethm2-2-0}Let $N>2m$ and $L=L_{0}+V\in \mathscr{A}_{m}$ where $L_{0}$ is a homogeneous elliptic operator of order 2m in divergence form. Assume $(A_{1})$ and $(A_{2})$ hold for some $2<q_{0}<\frac{N}{m}$. Then there exist a constant $\delta_{q_{0}}>0$ depending on $q_{0}$ such that when
\begin{eqnarray}\label{eq1-1}
\|V\|_{L^{\frac{N}{2m},\infty}}\leq \delta_{q_{0}},
\end{eqnarray}
the Riesz transform $\nabla^{m}L^{-1/2}$ is bounded on $L^{q}(\mathbb{R}^{N})$ for all $\frac{Nq_{0}'}{N+mq_{0}'}<q<q_{0}$.
\end{thm}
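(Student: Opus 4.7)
The plan is to build the $L^{q}$-boundedness in three stages: a new endpoint estimate at $L^{q_{0}}$ via a perturbative Neumann-series argument, an interpolation with Theorem \ref{thm4}, and a downward extension of the range to the claimed endpoint $q_{\ast}:=\frac{Nq_{0}'}{N+mq_{0}'}$ through duality and a Sobolev step.

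The core new step is to prove that $\nabla^{m}L^{-1/2}$ is bounded on $L^{q_{0}}$. I would use the factorization $\nabla^{m}L^{-1/2}=(\nabla^{m}L_{0}^{-1/2})\circ(L_{0}^{1/2}L^{-1/2})$, so that by $(A_{1})$ it suffices to bound $L_{0}^{1/2}L^{-1/2}$ on $L^{q_{0}}$. For this I would combine the resolvent identity
\[
(L+\lambda)^{-1}=(L_{0}+\lambda)^{-1}-(L_{0}+\lambda)^{-1}V(L+\lambda)^{-1}
\]
with the subordination formula $L^{-1/2}=\tfrac{2}{\pi}\int_{0}^{\infty}(L+\lambda^{2})^{-1}d\lambda$ to obtain
\[
L_{0}^{1/2}L^{-1/2}=I-\frac{2}{\pi}\int_{0}^{\infty}L_{0}^{1/2}(L_{0}+\lambda^{2})^{-1}\,V\,(L+\lambda^{2})^{-1}\,d\lambda,
\]
and then show the perturbation integral has small $L^{q_{0}}$-operator norm. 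Here $(A_{2})$ converts $V$-multiplication into $L_{0}$-relative bounds on $L^{q_{0}}$; the smallness hypothesis $\|V\|_{L^{N/(2m),\infty}}\le\delta_{q_{0}}$ enters via the Lorentz--H\"older estimate $\|Vf\|_{L^{q_{0}}}\lesssim\|V\|_{L^{N/(2m),\infty}}\|f\|_{L^{r,q_{0}}}$ with $\frac{1}{r}=\frac{1}{q_{0}}-\frac{2m}{N}$; and the $L^{q_{0}}$-analyticity of $e^{-tL_{0}}$ and $e^{-tL}$ (consequences of the off-diagonal and $L^{p}$-theory developed earlier in the paper) delivers the resolvent mapping $(L_{0}+\lambda^{2})^{-1}:L^{r,q_{0}}\to L^{q_{0}}$ with the correct $\lambda$-dependence. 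A Neumann-series argument then produces the desired bound on $L_{0}^{1/2}L^{-1/2}$.

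Combining this $L^{q_{0}}$ bound with Theorem \ref{thm4} on $(\tfrac{2N}{N+2m}\vee 1,2]$ and Riesz--Thorin interpolation yields the boundedness of $\nabla^{m}L^{-1/2}$ on $L^{q}$ for $q\in(\tfrac{2N}{N+2m},q_{0})$. To extend down to $q_{\ast}$, I would pass to the adjoint: the claim is equivalent to $L^{-1/2}(\nabla^{m})^{*}$ being bounded on $L^{q'}$ for $q'\in(q_{0}',q_{\ast}')=(q_{0}',q_{0}^{*})$, where $q_{0}^{*}=\tfrac{Nq_{0}}{N-mq_{0}}$ is the Sobolev conjugate of $q_{0}$; the identity $q_{\ast}'=q_{0}^{*}$ is precisely why the exponent $q_{\ast}$ appears. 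For $q'\in(q_{0}',\tfrac{2N}{N-2m})$ this is the dual of the range already established; for $q'\in[\tfrac{2N}{N-2m},q_{0}^{*})$, the order-$m$ smoothing of $L^{-1/2}$ paired with the Sobolev embedding $L^{p}\hookrightarrow L^{p^{*}}$, $\tfrac{1}{p^{*}}=\tfrac{1}{p}-\tfrac{m}{N}$ (which uses $q_{0}<N/m$), upgrades the adjoint bound, and interpolation with the $L^{2}$ Kato estimate closes the remaining gap.

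The principal obstacle is the first step: translating the Lorentz-norm smallness of $V$ into an operator-norm smallness of the perturbation integral on $L^{q_{0}}$ uniformly in $\lambda$. This demands a careful interplay between the Lorentz--H\"older inequality, the $L^{r,q_{0}}\to L^{q_{0}}$ mapping of $(L_{0}+\lambda^{2})^{-1}$ and $(L+\lambda^{2})^{-1}$, and the $L_{0}$-relative bound from $(A_{2})$; in the borderline regime $q_{0}\ge N/(2m)$, where the direct Lorentz--H\"older approach breaks down, one would instead interpolate through the $L^{2}$ Hardy--Rellich-type inequality \eqref{H-R} already implicit in the subcriticality hypothesis $L\in\mathscr{A}_{m}$.
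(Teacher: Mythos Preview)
Your approach is different in spirit from the paper's, and while the core perturbative idea is sound, there are two genuine gaps.

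First, you invoke ``the $L^{q_{0}}$-analyticity of $e^{-tL}$'' as a consequence of the earlier sections. This is not available: Section~\ref{sec2} only yields $e^{-tL}$ bounded on $L^{p}$ for $p\in[\frac{2N}{N+2m},\frac{2N}{N-2m}]$, and $q_{0}$ may well exceed $\frac{2N}{N-2m}$. Extending the semigroup to $L^{q_{0}}$ is precisely part of what this theorem establishes; assuming it is circular. You can repair this by expanding $(L+\lambda^{2})^{-1}$ itself as a full Neumann series in $(L_{0}+\lambda^{2})^{-1}V$, so that only $L_{0}$-resolvents appear---but your write-up does not do this.

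Second, your Lorentz--H\"older step $\|Vf\|_{L^{q_{0}}}\lesssim\|V\|_{L^{N/(2m),\infty}}\|f\|_{L^{r}}$ with $\frac{1}{r}=\frac{1}{q_{0}}-\frac{2m}{N}$ requires $q_{0}<\frac{N}{2m}$. For $\frac{N}{2m}\le q_{0}<\frac{N}{m}$ this collapses, and your proposed fix (interpolation through the $L^{2}$ Hardy--Rellich bound) does not produce an $L^{q_{0}}$ estimate. The paper handles this by the factorization $V=|V|^{1/2}\mathrm{sign}(V)\,|V|^{1/2}$ and estimating $|V|^{1/2}(t^{-1}+L_{0})^{-1/2}$ and its adjoint on $L^{q_{0}}$; since $|V|^{1/2}\in L^{N/m,\infty}$, this only needs $q_{0}<\frac{N}{m}$. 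This same issue infects your lower-endpoint extension: once $q_{0}\ge \frac{N}{2m}$ one has $q_{\ast}\le\frac{N}{N-m}$, and neither your Neumann series nor your duality/Sobolev bootstrap reaches the interval $(q_{\ast},\frac{N}{N-m}]$.

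For comparison, the paper does not attack $\nabla^{m}L^{-1/2}$ directly. It first proves (Theorem~\ref{ethm2}) that $\sqrt{t}\,\nabla^{m}e^{-tL}$ is uniformly bounded on $L^{q_{0}}$, using the $|V|^{1/2}$-factorized Neumann series for $\nabla^{m}(\mathbf{I}+tL)^{-1}$ and an internal bootstrap through Lemma~\ref{le005} to push $e^{-tL}$ up to $L^{q_{0}}$. Then the general machinery---Proposition~\ref{le52}, Proposition~\ref{pro31}(ii) for the upper range, and Lemma~\ref{le005} plus Propositions~\ref{pro1'} and~\ref{prop-2} (via duality of the semigroup) for the lower range down to $(q_{0}^{*})'=q_{\ast}$---delivers the Riesz transform on the full interval. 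The semigroup detour is what lets the paper reach below $\frac{N}{N-m}$ without a direct $L^{q}$ bound on $L_{0}^{1/2}L^{-1/2}$ there.
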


\begin{rem}\label{rem1-3}
{\rm We give  several remarks about
the conditions $(A_{1})$ and $(A_{2})$.

(i)\ The condition $(A_{1})$ implies that $\nabla^{m}L^{-1/2}_{0}$ is bounded on $L^{q}(\mathbb{R}^{N})$ for all $\frac{Nq_{0}'}{N+mq_{0}'}<q<q_{0}$. When $N>2m$ and $2<q_{0}<\frac{2N}{N-2m}$, the condition $(A_{2})$ is equivalent to
\begin{eqnarray}
\Big\|V({\bf I}+L_{0})^{-1}\Big\|_{L^{q_{0}}-L^{q_{0}}}<\infty.\nonumber
\end{eqnarray}

(ii)\ When $L_{0}=P(D)$, the  homogeneous elliptic operator of  order $2m$ with constant real coefficients, the condition $(A_{1})$ holds for all $2<q_{0}<\infty$ and the condition $(A_{2})$ is equivalent to
\begin{eqnarray}
\Big\|V({\bf I}+L_{0}^{\frac{1}{\ell}})^{-\ell}\Big\|_{L^{q_{0}}-L^{q_{0}}}<\infty,\nonumber
\end{eqnarray}
for all $\ell\in \mathbb{N}$ and $2<q_{0}<\infty$. Moreover, if  $N>4m$ and $V\in L^{\frac{N}{2m},\infty}(\mathbb{R}^{N})$, by weak type H\"{o}lder inequality and the Sobolev embedding theorem, we have that
\begin{eqnarray}\label{eq1-3}
\|Vf\|_{L^{q_{0}}}\leq C\|V\|_{L^{\frac{N}{2m},\infty}}\|(-\Delta)^{m}f\|_{L^{q_{0}}}\leq C\|V\|_{L^{\frac{N}{2m},\infty}}\|L_{0}f\|_{L^{q_{0}}},\ \ \   1<q_{0}<\frac{N}{2m},\nonumber
\end{eqnarray}
which means that the condition  $(A_{2})$ holds for $2<q_{0}<\frac{N}{2m}$.}
\end{rem}

The paper is organized as follows: In Section 2, we develop the off-diagonal estimates for families of operators related to the heat semigroup $e^{-tL}$. Section 3 is devoted to the study of $L^{q}$ boundedness of Riesz transform for $q\leq 2$. In Section 4, we impose further regularities on potential $V$ to investigate the $L^{q}$ boundedness of Riesz transform for $q>2$. Finally, we apply our main results to some important Schr\"{o}dinger operators.

\section{Off-diagonal estimates}\label{sec2}
In this section, let $L\in\mathscr{A}_{m}$ $(m\geq2)$  be the nonnegative self-adjoint operator associated to the closed form $\mathcal{Q}$  defined by (\ref{form}). It is well-known that $L$ with domain $D(L)$  generates an analytic semigroup $e^{-tL}$ on $L^{2}(\mathbb{R}^{N})$.  Thus, the following families of operators
$$\{e^{-tL}\}_{t>0},\ \ \{tLe^{-tL}\}_{t>0} \ \  \text{and}\ \ \{\sqrt{t}\nabla^{m} e^{-tL}\}_{t>0}$$
are well-defined and uniformly bounded on $L^2(\mathbb{R}^N)$.
We  will focus on the off-diagonal estimates for those families of  operators, which have abundant applications to the study of $L^p$-extension of semigroup
$e^{-tL}$, $L^p$-spectra theory of $L$, the Hardy spaces and Riesz transform associated to $L$  and other related problems.

In subsection \ref{sec3}, we mainly establish the $L^2$ off-diagonal estimates for family of operators $\{e^{-tL}\}_{t>0}$.
If $V=0$, it is well known that $e^{-tL_{0}}$ satisfies the $L^2$ off-diagonal estimates (see e.g. \cite{A2007} \cite{B-K04-1} \cite{D95-3} \cite{D-D-Y1}). When $V\neq0$, by applying Davies' perturbation method in \cite{D95-3}, it is found that the off-diagonal estimates for $e^{-tL}$ contain a blow up term $e^{ct}$ $(c>0)$. Fortunately, we observe that for each fixed $m\geq2$, $\Lambda>\rho>0$ and $\mu\in[0,1)$, the space $\mathscr{A}_{m}(\rho,\Lambda,\mu)$ is invariance by dilations, which could help us to get rid of the blow up term $e^{ct}$. Consequently, the $L^2$ off-diagonal estimates for $\{tLe^{-tL}\}_{t>0}$ and $\{\sqrt{t}\nabla^{m} e^{-tL}\}_{t>0}$ will also be obtained.

In subsection \ref{sec2-2}, we will study the $L^{p}-L^2$ and $L^{2}-L^{p}$ off-diagonal estimates of  $\{e^{-tL}\}_{t>0}$ for appropriate $p$, which can be concluded by using the analyticity of $e^{-tL}$ and the same perturbation arguments as in subsection \ref{sec3}.

\subsection{The $L^{2}$ off-diagonal estimates for families of operators} \label{sec3}

We begin  with the following definition of $L^p$-$L^q$ off-diagonal estimates for a general family of operators.

\begin{defn}\label{def1}  \rm {{\it ($L^p$-$L^q$  off-diagonal estimates
for a family of operators}) \quad Let $\{S_{t}\}_{t>0}$ be a family
of uniformly bounded operators on $L^2(\mathbb{R}^{N})$. We say that $\{S_{t}\}_{t>0}$ satisfy the {\it $L^p$-$L^q$
off-diagonal estimates} for some $p,q\in [1,\infty)$ with $p\leq q$ if
there exist constants $C,c>0$ such that for all closed sets
$E,F\subset \mathbb{R}^{N}$, $t>0$ and $f\in
L^{2}(\mathbb{R}^{N})\cap L^{p}(\mathbb{R}^{N})$ supported in $F$,
the following estimate holds:
\begin{equation} \label{cond211}
\begin{split}\|S_{t}f\|_{L^{q}(E)}\leq
Ct^{\frac{N}{2m}(\frac{1}{q}-\frac{1}{p})}
e^{-c\big(\frac{d(E,F)}{t^{1/2m}}\big)^{2m/(2m-1)}} \|f\|_{L^{p}(F)},
\end{split}
 \end{equation}
where and in the sequel, $d(E,F)$ denotes the semi-distance induced
on sets by the Euclidean distance. In particular, if (\ref{cond211})
holds for $p=q$, then we say $\{S_{t}\}_{t>0}$ satisfy the {\it $L^{p}$
off-diagonal estimates}.}
\end{defn}

To study the $L^2$ off-diagonal estimates of the families of operators related to semigroup $e^{-tL}$, we first introduce the function space $\mathcal {E}_{m}(\mathbb{R}^N)$ which  consist of all bounded real-valued $C^{\infty}$ functions $\phi$ on $\mathbb{R}^{N}$ satisfying  $\|D^{\alpha}\phi\|_{L^{\infty}}\leq1$ for all $\alpha\in \mathbb{N}^N$ such that $1\leq|\alpha|\leq m$. It is easy to see that for $\lambda\in \mathbb{R}$ and $\phi\in \mathcal {E}_{m}(\mathbb{R}^N)$, the multiplication operators defined by  $e^{\pm\lambda\phi}$ are bounded and invertible on both $L^2(\mathbb{R}^N)$ and $W^{m,2}(\mathbb{R}^N)$. For $\lambda\in \mathbb{R}$ and $\phi\in \mathcal {E}_{m}(\mathbb{R}^N)$, consider the twist form
\begin{eqnarray}\label{perb}
&&\mathcal {Q}_{\lambda\phi}(f,g):=\mathcal {Q}(e^{-\lambda\phi}f,e^{\lambda\phi}g)\nonumber
\end{eqnarray}
with domain  $D(\mathcal {Q}_{\lambda\phi})=D(\mathcal {Q})$. Denote $L_{\lambda\phi}$ by
\begin{eqnarray}\label{perb-oper}
L_{\lambda\phi}f:=e^{\lambda\phi}Le^{-\lambda\phi}f,
\end{eqnarray}
where
$f\in D(L_{\lambda\phi})=\{f\in L^{2}(\mathbb{R}^{N}); \ e^{-\lambda\phi}f\in D(L) \}$. Then it follows that ${\rm Spec}(L_{\lambda\phi})={\rm Spec}(L)$ and $e^{-tL_{\lambda\phi}}f=e^{\lambda\phi}e^{-tL}e^{-\lambda\phi}f$ for $f\in L^2(\mathbb{R}^N)$.
Moreover,  we have that $D(H_{\lambda\phi})\subset D(\mathcal{Q})$ and
\begin{eqnarray}\label{Relation}
\mathcal{Q}_{\lambda\phi}(f,f)=\langle L_{\lambda\phi}f,f\rangle, \ \ \ \ \ \ \ \ \  \ f\in D(H_{\lambda\phi}).
\end{eqnarray}

\begin{lem}\label{le004}
Let $\lambda\in \mathbb{R}$, $\phi\in \mathcal {E}_{m}(\mathbb{R}^N)$, $L\in \mathscr{A}_{m}(\rho,\Lambda,\mu)$ and $L_{\lambda\phi}$ be the operator defined in (\ref{perb-oper}). Then there exist constants $C, c>0$ independent of specific $L$, $\lambda$ and $\phi$ such that
\begin{eqnarray}\label{eq001}
\|e^{-tL_{\lambda\phi}}f\|_{L^{2}(\mathbb{R}^N)}+\|tL_{\lambda\phi}e^{-tL_{\lambda\phi}}f\|_{L^{2}(\mathbb{R}^N)}\leq Ce^{c(1+\lambda^{2m})t}\|f\|_{L^{2}(\mathbb{R}^N)},\ \ \ t>0.
\end{eqnarray}
\end{lem}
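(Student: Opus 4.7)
The plan is to compare the twisted form $\mathcal{Q}_{\lambda\phi}$ with the untwisted form $\mathcal{Q}$ and show that $L_{\lambda\phi}+\omega$ is $m$-sectorial in a sector independent of $\lambda,\phi,L$, with $\omega=c(1+\lambda^{2m})$. Since $\phi$ and $V$ are real, the factor $e^{-\lambda\phi}\,\overline{e^{\lambda\phi}}=1$ leaves the potential part unchanged, so the whole perturbation sits in the principal part. By Leibniz's rule,
$$
\partial^{\beta}(e^{-\lambda\phi}f)=e^{-\lambda\phi}\sum_{\gamma\le\beta}c_{\beta,\gamma}\,Q_{\beta-\gamma}(\lambda,\phi)\,\partial^{\gamma}f,
$$
with $Q_{\beta-\gamma}(\lambda,\phi)$ a polynomial in $\lambda$ of degree $\le|\beta-\gamma|$ whose coefficients are bounded products of derivatives of $\phi$ of order $\le m$, hence uniformly bounded by $1$ for $\phi\in\mathcal{E}_{m}(\mathbb{R}^{N})$. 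Expanding both factors in $\mathcal{Q}_{0}(e^{-\lambda\phi}f,e^{\lambda\phi}f)$, the exponentials cancel and I obtain
$$
\mathcal{Q}_{\lambda\phi}(f,f)=\mathcal{Q}(f,f)+R_{\lambda,\phi}(f,f),
$$
where $R_{\lambda,\phi}(f,f)$ is a finite sum of terms of the form $\lambda^{k}\int a_{\alpha,\beta}\,\varphi(x)\,\partial^{\gamma}f\,\overline{\partial^{\delta}f}\,dx$ with $|\gamma|,|\delta|\le m$, $|\gamma|+|\delta|<2m$, $\|\varphi\|_{\infty}\le1$, and $k\le 2m-|\gamma|-|\delta|$.

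Next I would bound each remainder term. By Cauchy--Schwarz and the Gagliardo--Nirenberg interpolation $\|\partial^{\gamma}f\|_{L^{2}}\le C\|\nabla^{m}f\|_{L^{2}}^{|\gamma|/m}\|f\|_{L^{2}}^{1-|\gamma|/m}$,
$$
\Bigl|\lambda^{k}\int\varphi(x)\,\partial^{\gamma}f\,\overline{\partial^{\delta}f}\,dx\Bigr|\le C|\lambda|^{k}\|\nabla^{m}f\|_{L^{2}}^{2s}\|f\|_{L^{2}}^{2(1-s)},
$$
with $s=(|\gamma|+|\delta|)/(2m)\in[0,1)$. Since $k\le 2m(1-s)$, Young's inequality bounds this by $\eta\|\nabla^{m}f\|_{L^{2}}^{2}+C\eta^{-1}(1+\lambda^{2m})\|f\|_{L^{2}}^{2}$ for any $\eta>0$. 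Summing and choosing $\eta$ small, together with the strong G{\aa}rding inequality (\ref{strong}) and the subcritical condition (\ref{condition}), I obtain
$$
\mathrm{Re}\,\mathcal{Q}_{\lambda\phi}(f,f)+c_{1}(1+\lambda^{2m})\|f\|_{L^{2}}^{2}\ge c_{0}(1-\mu)\|\nabla^{m}f\|_{L^{2}}^{2},
$$
while the same bound applied to the imaginary part (which equals $\mathrm{Im}\,R_{\lambda,\phi}(f,f)$ since $\mathcal{Q}(f,f)\in\mathbb{R}$ by self-adjointness) yields
$$
|\mathrm{Im}\,\mathcal{Q}_{\lambda\phi}(f,f)|\le C_{2}\bigl(\mathrm{Re}\,\mathcal{Q}_{\lambda\phi}(f,f)+c_{1}(1+\lambda^{2m})\|f\|_{L^{2}}^{2}\bigr).
$$

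These two estimates show that the shifted form $\mathcal{Q}_{\lambda\phi}(\cdot,\cdot)+\omega\|\cdot\|_{L^{2}}^{2}$ with $\omega=c_{1}(1+\lambda^{2m})$ is closed, accretive, and sectorial with half-angle independent of $\lambda,\phi$ and of $L\in\mathscr{A}_{m}(\rho,\Lambda,\mu)$. Standard form-and-semigroup theory then yields the quasi-contractive bound $\|e^{-tL_{\lambda\phi}}\|_{L^{2}\to L^{2}}\le e^{\omega t}$, and analyticity in a fixed sector gives $\|tL_{\lambda\phi}e^{-tL_{\lambda\phi}}\|_{L^{2}\to L^{2}}\le Ce^{\omega t}$; together these prove (\ref{eq001}). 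The main obstacle I expect is the exponent bookkeeping in the Leibniz expansion: one must check that each lower-order mixed term $\partial^{\gamma}f\,\overline{\partial^{\delta}f}$ is genuinely paired with $\lambda^{k}$ of degree $k\le 2m-|\gamma|-|\delta|$, so that interpolation generates a cost on $\|f\|_{L^{2}}^{2}$ of order at most $\lambda^{2m}$ rather than a higher power. This exact matching is what forces the $\lambda^{2m}$ exponent appearing in the statement.
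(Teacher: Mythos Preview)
Your proposal is correct and follows essentially the same approach as the paper, which simply cites Lemmas~6 and~7 of Davies \cite{D95-3} and omits the details; indeed, the perturbation estimate $|\mathcal{Q}_{\lambda\phi}(f,f)-\mathcal{Q}(f,f)|\le\varepsilon\mathcal{Q}_{0}(f,f)+b_{\varepsilon}(1+\lambda^{2m})\|f\|_{L^{2}}^{2}$ that you derive via Leibniz expansion and interpolation is exactly the inequality the paper later invokes (see the proof of Theorem~\ref{pro22}). Your degree bookkeeping $k\le 2m-|\gamma|-|\delta|$ and the resulting Young-inequality step are precisely what produce the $\lambda^{2m}$ rate, and the passage from sectoriality of the shifted form to the quasi-contractive and analytic bounds is standard.
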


\begin{proof}
The proof follows from similar procedures as the ones used in the proof of Lemmas 6 and 7 in Davies \cite{D95-3}, we omit the details here.
\end{proof}

\begin{rem}{\rm For fixed  $\Lambda>\rho>0$ and $\mu\in[0,1)$,  by the proof of  Lemma \ref{le004}, we know that the constant $c, C>0$ in (\ref{eq001}) are independent of $L\in \mathscr{A}_{m}(\rho,\Lambda,\mu)$, $\lambda\in\mathbb{R}$ and $\phi\in\mathcal {E}_{m}(\mathbb{R}^N)$, which will play a crucial role in the further study.}
\end{rem}
\vskip0.2cm
In order to deal with the off-diagonal estimates, we need to following  auxiliary functions
\begin{eqnarray}
\widetilde{d}(E,F):=\sup_{\phi\in \mathcal {E}_{m}(\mathbb{R}^{N})}\Big{[}\inf\{\phi(x)-\phi(y); \ x\in E,\ y\in F \}\Big{]}\nonumber
\end{eqnarray}
and
$\mathcal {A}(\phi):=\inf\{\phi(x);\ x\in E\}-\sup\{\phi(y);\ y\in F\},$
where $E$, $F$ are any two  subsets of $\mathbb{R}^{N}$.
It was shown in Davies \cite{D95-3} that
\begin{eqnarray}\label{equi-1}
d(E,F)\thicksim\widetilde{d}(E,F)\thicksim\sup\{\mathcal {A}(\phi);\ \phi\in \mathcal {E}_{m}\},
\end{eqnarray}
 where $E,F$ are disjoint compact convex subsets of $\mathbb{R}^N$, $d(E,F)=\{d(x,y);\ x\in E, y\in F\}$ is the deduced by the Euclidean distance and ``$A\thicksim B$" means that there exists some absolute constant $c$ only depending upon dimension $N$ such that $c^{-1}A\leq B\leq cA$. In fact, it is easy to show that (\ref{equi-1}) still holds for compact subsets of $\mathbb{R}^N$.
\vskip0.3cm

Now we show the $L^{2}$ off-diagonal estimates for the families of operators $\{e^{-tL}\}_{t>0}$ and  $\{tLe^{-tL}\}_{t>0}$ as follows:

\begin{thm}\label{thm1} Let $L=L_{0}+V\in \mathscr{A}_{m}$ where $L_{0}$ is a homogeneous elliptic operator of order 2m in divergence form. Then there exist constants $A,\ a>0$  such that  for arbitrary closed sets $E,F\subset\mathbb{R}^N$, $t>0$ and $f\in L^{2}(\mathbb{R}^N)$ with supp$f\subset F$ we have
\begin{eqnarray}\label{eq18}
\|e^{-tL}f\|_{L^{2}(E)}+\|tLe^{-tL}f\|_{L^{2}(E)}\leq A e^{-a\big{(}\frac{d(E,F)}{t^{1/2m}}\big{)}^{2m/(2m-1)}}\|f\|_{L^{2}(F)}.
\end{eqnarray}
\end{thm}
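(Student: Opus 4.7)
The plan is to adapt Davies' exponential perturbation method and then to kill the resulting time-growth via the scale invariance of $\mathscr{A}_m(\rho,\Lambda,\mu)$. For $\lambda\geq 0$ and $\phi\in\mathcal{E}_m(\rr^N)$, the similarities $e^{-tL}=e^{-\lambda\phi}e^{-tL_{\lambda\phi}}e^{\lambda\phi}$ and $tLe^{-tL}=e^{-\lambda\phi}(tL_{\lambda\phi}e^{-tL_{\lambda\phi}})e^{\lambda\phi}$, combined with the pointwise bounds $e^{-\lambda\phi(x)}\leq e^{-\lambda\inf_E\phi}$ on $E$ and $e^{\lambda\phi(x)}\leq e^{\lambda\sup_F\phi}$ on $F$, reduce the problem to Lemma~\ref{le004}. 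For any $f$ supported in $F$ and any $\lambda\geq 0$,
\begin{equation*}
\|e^{-tL}f\|_{L^2(E)}+\|tLe^{-tL}f\|_{L^2(E)}\leq C\,e^{-\lambda\mathcal{A}(\phi)}\,e^{c(1+\lambda^{2m})t}\,\|f\|_{L^2(F)}.
\end{equation*}

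Next I would optimise in $\lambda\geq 0$. The minimum of $-\lambda\mathcal{A}(\phi)+c\lambda^{2m}t$ is attained at $\lambda\asymp (\mathcal{A}(\phi)/t)^{1/(2m-1)}$ and equals $-a(\mathcal{A}(\phi)^{2m}/t)^{1/(2m-1)}$ for some $a>0$. Taking the supremum over $\phi\in\mathcal{E}_m$ and invoking the equivalence (\ref{equi-1}) yields
\begin{equation*}
\|e^{-tL}f\|_{L^2(E)}+\|tLe^{-tL}f\|_{L^2(E)}\leq C\,e^{ct}\exp\!\Big(-a\bigl(d(E,F)/t^{1/(2m)}\bigr)^{2m/(2m-1)}\Big)\|f\|_{L^2(F)}.
\end{equation*}
This is the target bound (\ref{eq18}) disfigured by the parasitic factor $e^{ct}$, which spoils the estimate for large $t$; removing this factor is the main obstacle.

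The resolution exploits the dilation invariance of the class. For $s>0$, let $U_s f(x)=f(sx)$ and let $L^{(s)}$ be the operator with coefficients $a_{\alpha,\beta}(\cdot/s)$ and potential $s^{-2m}V(\cdot/s)$. A direct change of variables in (\ref{strong}) and (\ref{condition}) shows that $L^{(s)}\in\mathscr{A}_m(\rho,\Lambda,\mu)$ with the \emph{same} three constants, and that $U_s^{-1}LU_s=s^{2m}L^{(s)}$. Choosing $s=t^{-1/(2m)}$ then produces the identities $e^{-tL}=U_s e^{-L^{(s)}}U_s^{-1}$ and $tLe^{-tL}=U_s\bigl(L^{(s)}e^{-L^{(s)}}\bigr)U_s^{-1}$. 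I would apply the two-step estimate above at time $1$ to $L^{(s)}$ instead of at time $t$ to $L$: since the constants $C,c$ in Lemma~\ref{le004} are uniform over $\mathscr{A}_m(\rho,\Lambda,\mu)$, the blow-up factor collapses to the harmless constant $e^{c}$. Conjugation by $U_s$ contributes factors of $s^{\pm N/2}$ on the $L^2$-norms over sets that cancel exactly under the substitution $y=sx$, while the distance transforms as $d(s^{-1}E,s^{-1}F)=d(E,F)/t^{1/(2m)}$, which is precisely the scale appearing in (\ref{eq18}). This completes the plan.
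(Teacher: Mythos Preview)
Your proposal is correct and follows essentially the same route as the paper: Davies' perturbation plus Lemma~\ref{le004} and optimisation in $\lambda$ produce the estimate with the parasitic factor $e^{ct}$, and the dilation invariance of $\mathscr{A}_m(\rho,\Lambda,\mu)$ removes it by reducing to time $1$. The only omissions are minor: the equivalence (\ref{equi-1}) is stated for compact sets, so one first proves (\ref{eq18}) on compact $E,F$ and then passes to arbitrary closed sets by monotone approximation, and in your last line the rescaled sets should read $sE,sF$ rather than $s^{-1}E,s^{-1}F$.
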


\begin{proof}  We only need to prove that
\begin{eqnarray}
\|tLe^{-tL}f\|_{L^{2}(E)}\leq A e^{-a\big{(}\frac{d(E,F)}{t^{1/2m}}\big{)}^{2m/(2m-1)}}\|f\|_{L^{2}(F)},\nonumber
\end{eqnarray}
since the proof for $\{e^{-tL}\}_{t>0}$ shares exactly the same procedure. Notice that for $L\in\mathscr{A}_{m}$, there exist $0<\rho<\Lambda<\infty$ and $\mu\in[0,1)$ such that $L\in \mathscr{A}_{m}(\rho,\Lambda,\mu)$. First, we show that there exist positive constants $C,\ a,\ c>0$ independent of $L\in \mathscr{A}_{m}(\rho,\Lambda,\mu)$ such that
\begin{eqnarray}\label{eq19}
\|tLe^{-tL}f\|_{L^{2}(E)}\leq C e^{ct}e^{-a\big{(}\frac{d(E,F)}{t^{1/2m}}\big{)}^{2m/(2m-1)}}\|f\|_{L^{2}(F)},
\end{eqnarray}
for arbitrary  compact set $E,F\subset\mathbb{R}^N$ and $f\in L^{2}(\mathbb{R}^N)$ with supp$f\subset F$.
The most important fact is that (\ref{eq19}) holds uniformly for any $L\in \mathscr{A}_{m}(\rho,\Lambda,\mu)$.

Denote $P_{E}$ and  $P_{E}$ the projections on $L^{2}(\mathbb{R}^{N})$ obtained by the characteristic function of the compact subset $E$ and $F$, respectively.  Then we have
\begin{eqnarray}\label{eq5}\|P_{E}tLe^{-tL}P_{F}\|_{L^{2}(\mathbb{R}^N)-L^{2}(\mathbb{R}^N)}
&\le&e^{-\lambda\mathcal {A}(\phi)}\|e^{\lambda\phi}tLe^{-tL}e^{-\lambda\phi}\|_{L^{2}(\mathbb{R}^N)-L^{2}(\mathbb{R}^N)}\nonumber\\
&=& e^{-\lambda\mathcal {A}(\phi)}\|tL_{\lambda\phi}e^{-tL_{\lambda\phi}}\|_{L^{2}(\mathbb{R}^N)-L^{2}(\mathbb{R}^N)},\nonumber
\end{eqnarray}
where $\mathcal{A}(\phi)=\inf\{\phi(x);\ x\in E\}-\sup\{\phi(y);\ y\in F\}$.  Thus by using Lemma \ref{le004} and (\ref{equi-1}) there exists constants $C,\ c,\ c'>0$ independent of $L\in \mathscr{A}_{m}(\rho,\Lambda,\mu)$, $\lambda\in\mathbb{R}$ and $\phi\in\mathcal {E}_{m}(\mathbb{R}^N)$ such that
\begin{eqnarray}\label{eq6}
\|P_{E}(tH)^{k}e^{-tH}P_{F}\|_{L^{2}(\mathbb{R}^N)-L^{2}(\mathbb{R}^N)}\leq C e^{-c'\lambda d(E,F)+c(1+\lambda^{2m})t}.\nonumber
\end{eqnarray}
Thus (\ref{eq19}) follows by choosing $\lambda=\big{(}\frac{c'd(E,F)}{2c t}\big{)}^{1/(2m-1)}$ and $a=\big{(}\frac{c'}{2c^{1/2m} }\big{)}^{2m/(2m-1)}$ in above inequality.

Now, we intend to get rid of the term $e^{c t}$ in the (\ref{eq19}) for any $t>1$. Let
$\mathcal {U}_{\delta}f=\delta^{\frac{N}{2}}f(\delta x)$ ($\delta>0$)
and $\mathcal Q$ be the associated form of $L\in\mathscr{A}_{m}(\rho,\Lambda,\mu)$ with domain $D(\mathcal Q)$, we define $$\mathcal {Q}^{\delta}(f,f):=\delta^{-2m}\mathcal Q(\mathcal {U}_{\delta}f,\mathcal {U}_{\delta}f)$$
with domain
$D(\mathcal {Q}^{\delta})=\{f\in W^{m,2}(\mathbb{R}^N):\ \ \mathcal {U}_{\delta}f\in D(\mathcal {Q})\}$. Then
\begin{eqnarray}\label{extra2-1}
\mathcal {Q}^{\delta}(f,f)
&=&\sum_{|\alpha|=|\beta|=m}\int_{\mathbb{R}^{N}}a_{\alpha,\beta}(\delta^{-1}x)
\partial^{\beta}f(x)\overline{\partial^{\alpha}g(x)}dx+\delta^{-2m}\int_{\mathbb{R}^{N}}V(\delta^{-1}x)f^{2}(x)dx\nonumber\\
&:=&\mathcal {Q}^{\delta}_{0}(f,f)+\langle V_\delta f,f\rangle,\nonumber
\end{eqnarray}
where $V_\delta=\delta^{-2m}V(\delta^{-1}\cdot)$. Moreover,
\begin{equation}
\begin{split}
\mathcal {Q}^{\delta}_{0}(f,f)=\sum_{|\alpha|=|\beta|=m}\int_{\mathbb{R}^{N}}a_{\alpha,\beta}(\delta^{-1}x)
\partial^{\beta}f(x)\overline{\partial^{\alpha}f(x)}dx\geq\rho\|\nabla^{m}f\|^{2}_{L^{2}(\mathbb{R}^{N})},\nonumber
\end{split}
\end{equation}
and
\begin{eqnarray}\label{extra2-2}
\int_{\mathbb{R}^{N}}V_{\delta,-}(x)f^{2}(x)dx
&\leq&\mu\Big(\mathcal Q^{\delta}_{0}(f,f)+\int_{\mathbb{R}^{N}}V_{\delta,+}(\delta^{-1}x)f^{2}(x)dx\Big),\nonumber
\end{eqnarray}
where $V_{\delta,\pm}$ be the positive and negative  parts of $V_{\delta}$ respectively. Therefore, we have that
\begin{eqnarray}\label{scaling-operator}
L_{\delta}=\delta^{-2m}\mathcal {U}^{-1}_{\delta}L\mathcal {U}_{\delta},\  \ \ \ \delta>0,
\end{eqnarray}
with domain $D(H_{\delta})=\{f\in L^{2}(\mathbb{R}^{N}):\ \mathcal {U}f\in D(H)\}$ is the nonnegative self-adjoint operator associated with $\mathcal {Q}^{\delta}$ and belongs to $\mathscr{A}_{m}(\rho,\Lambda,\mu)$.

On the other hand, notice that Lemma \ref{le004} holds uniformly for $L\in\mathscr{A}_{m}(\rho,\Lambda,\mu)$, which means that (\ref{eq19}) also holds for $L_{\delta}$.
Therefore, for all $0<\delta\leq1$, arbitrary  compact sets $E,F\subset\mathbb{R}^N$ and $f\in L^{2}(\mathbb{R}^N)$ with supp$f\subset F$, it follows from (\ref{eq19}) that
\begin{eqnarray}
\|tLe^{-tL}f\|_{L^{2}(E)}&=&\|\mathcal {U}_{\delta}(t\delta^{2m}L_{\delta})^{k}\mathcal {U}^{-1}_{\delta}e^{-t\delta^{2m}\mathcal {U}_{\delta}L_{\delta}\mathcal {U}^{-1}_{\delta}}f\|_{L^{2}(E)}\nonumber\\
&=&\|(t\delta^{2m}H_{\delta})^{k}e^{-t\delta^{2m}H_{\delta}}\mathcal {U}^{-1}_{\delta}f\|_{L^{2}(\delta E)}\nonumber\\
&\leq& Ce^{ct\delta^{2m}}e^{-a\big{(}\frac{d(\delta E,\delta F)}{\delta t^{1/2m}}\big{)}^{2m/(2m-1)}}\|f\|_{L^{2}(F)}\nonumber
\end{eqnarray}
where  $\delta E:=\{\delta x; \ x\in E\}$ for any set $E\subset \mathbf{}\mathbb{R}^N$ and the constants $C,\ a, \ c$ are independent of $\delta$ due to the fact that the estimate (\ref{eq19}) holds uniformly in $L_{\delta}\in \mathscr{A}_{m}(\rho,\Lambda,\mu)$.
Note that  $d(\delta E,\delta F)=\delta d(E,F)$, then by choosing $\delta=t^{-1/2m}$ $(t>1)$ we obtain
\begin{eqnarray}\label{off-diagonal}
\|tLe^{-tL}f\|_{L^{2}(E)}
&\leq& C e^{ct\delta^{2m}}e^{-a\big{(}\frac{\delta d(E,F)}{\delta t^{1/2m}}\big{)}^{2m/(2m-1)}}\|f\|_{L^{2}(F)}\nonumber\\
&\leq& Ae^{-a\big{(}\frac{d(E,F)}{t^{1/2m}}\big{)}^{2m/(2m-1)}}\|f\|_{L^{2}(F)}.
\end{eqnarray}

Finally, observe that for arbitrary closed set $E,F\subset \mathbb{R}^{N}$, since $E=\cup^{\infty}_{\ell=1}E_{\ell}$ and $F=\cup^{\infty}_{\ell=1}E_{\ell}$ where both $\{E_{\ell}\}^{\infty}_{\ell=1}$ and $\{F_{\ell}\}^{\infty}_{\ell=1}$ are increasing monotone sets sequences. Then by a limitation procedure, it is easy to see that (\ref{off-diagonal}) holds for arbitrary closed set. Thus we finish the proof of Theorem \ref{thm1}.
\end{proof}
\vskip0.2cm

Now we turn to study the $L^2$ off-diagonal estimates of $\{\sqrt{t}\nabla^{m}e^{-tL}\}_{t>0}$.

\begin{thm}\label{pro20}
Let $L=L_{0}+V\in \mathscr{A}_{m}$ where $L_{0}$ is a homogeneous elliptic operator of order 2m in divergence form. Then there exist constants $B, b>0$ such that for arbitrary closed set $E,F\subset\mathbb{R}^N$, $t>0$ and $f\in L^{2}(\mathbb{R}^N)$ with supp$f\subset F$,
\begin{eqnarray}
\|\sqrt{t}\nabla^{m}e^{-tL}f\|_{L^{2}(E)}\leq B e^{-b\big{(}\frac{d(E,F)}{t^{1/2m}}\big{)}^{2m/(2m-1)}}\|f\|_{L^{2}(F)}.\nonumber
\end{eqnarray}
\end{thm}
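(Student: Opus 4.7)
The plan is to mirror the proof of Theorem~\ref{thm1}, adding one extra a priori ingredient tailored to the gradient. For every $\phi\in\mathcal{E}_{m}(\mathbb{R}^{N})$ and $\lambda\in\mathbb{R}$, uniformly in $L\in\mathscr{A}_{m}(\rho,\Lambda,\mu)$, I first establish the gradient analogue of Lemma~\ref{le004}:
\[
\sqrt{t}\,\|\nabla^{m}e^{-tL_{\lambda\phi}}f\|_{L^{2}}\leq C\,e^{c(1+\lambda^{2m})t}\|f\|_{L^{2}},\qquad t>0.
\]
Once this is in hand, the Davies perturbation argument from Theorem~\ref{thm1} delivers the off-diagonal estimate up to an auxiliary $e^{c''t}$ factor, and the dilation invariance of $\mathscr{A}_{m}(\rho,\Lambda,\mu)$ combined with the scaling identity $\nabla^{m}\mathcal{U}_{\delta}=\delta^{m}\mathcal{U}_{\delta}\nabla^{m}$ removes that factor exactly as before.

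To produce the a priori bound I control the twisted form $\mathcal{Q}_{\lambda\phi}$ from below. The strong G\aa{}rding inequality together with (\ref{condition}) gives $\mathrm{Re}\,\mathcal{Q}(g,g)\geq(1-\mu)\rho\|\nabla^{m}g\|_{L^{2}}^{2}$ on $D(\mathcal{Q})$, and the $V$-contribution to $\mathcal{Q}_{\lambda\phi}$ coincides with that of $\mathcal{Q}$ because $e^{-\lambda\phi}g\cdot\overline{e^{\lambda\phi}g}=|g|^{2}$. Hence
\[
\mathcal{Q}_{\lambda\phi}(g,g)=\mathcal{Q}(g,g)+\bigl[\mathcal{Q}_{0}(e^{-\lambda\phi}g,e^{\lambda\phi}g)-\mathcal{Q}_{0}(g,g)\bigr].
\]
Expanding the bracketed remainder via Leibniz, using $|\partial^{\gamma}(e^{\pm\lambda\phi})|\leq C(1+|\lambda|^{|\gamma|})e^{\pm\lambda\phi}$ for $|\gamma|\leq m$, and converting each cross term $\lambda^{2k}\|\nabla^{m-k}g\|_{L^{2}}^{2}$ by Young plus Gagliardo--Nirenberg into $\eta\|\nabla^{m}g\|_{L^{2}}^{2}+C_{\eta}\lambda^{2m}\|g\|_{L^{2}}^{2}$, one obtains
\[
\mathrm{Re}\,\mathcal{Q}_{\lambda\phi}(g,g)\geq c_{1}\|\nabla^{m}g\|_{L^{2}}^{2}-c_{2}(1+\lambda^{2m})\|g\|_{L^{2}}^{2},
\]
with $c_{1},c_{2}$ depending only on $(\rho,\Lambda,\mu)$. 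Inserting $g=e^{-tL_{\lambda\phi}}f$ into (\ref{Relation}), multiplying by $t$, and invoking Lemma~\ref{le004} on both $\|e^{-tL_{\lambda\phi}}f\|_{L^{2}}$ and $\|tL_{\lambda\phi}e^{-tL_{\lambda\phi}}f\|_{L^{2}}$ yields the advertised bound.

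For compact $E,F\subset\mathbb{R}^{N}$ and $f$ supported in $F$, write $e^{-tL}=e^{-\lambda\phi}e^{-tL_{\lambda\phi}}e^{\lambda\phi}$ and apply Leibniz once more. Pointwise on $E$ one has $|\partial^{\alpha}(e^{-\lambda\phi})|\leq C(1+|\lambda|^{|\alpha|})e^{-\lambda\inf_{E}\phi}$, while $\|e^{\lambda\phi}f\|_{L^{2}(F)}\leq e^{\lambda\sup_{F}\phi}\|f\|_{L^{2}(F)}$. Gagliardo--Nirenberg interpolation between the $k=0$ endpoint (Lemma~\ref{le004}) and the $k=m$ endpoint (the a priori bound) yields $t^{k/2m}\|\nabla^{k}e^{-tL_{\lambda\phi}}h\|_{L^{2}}\leq Ce^{c(1+\lambda^{2m})t}\|h\|_{L^{2}}$ for $0\leq k\leq m$, so summing the Leibniz terms produces
\[
\sqrt{t}\,\|\nabla^{m}e^{-tL}f\|_{L^{2}(E)}\leq P(\lambda,t)\,e^{-\lambda\mathcal{A}(\phi)+c'(1+\lambda^{2m})t}\|f\|_{L^{2}(F)}
\]
with $P$ polynomial in $|\lambda|$ and $t^{1/2m}$. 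Choosing $\phi\in\mathcal{E}_{m}$ so that $\mathcal{A}(\phi)\sim d(E,F)$ via (\ref{equi-1}) and $\lambda=c_{0}(d(E,F)/t^{1/2m})^{1/(2m-1)}$ with $c_{0}$ small absorbs $P$ into the exponential and gives the off-diagonal estimate with a residual $e^{c''t}$ prefactor, uniformly in $L\in\mathscr{A}_{m}(\rho,\Lambda,\mu)$. Since $\sqrt{t}\,\nabla^{m}e^{-tL}=\sqrt{t\delta^{2m}}\,\mathcal{U}_{\delta}\nabla^{m}e^{-t\delta^{2m}L_{\delta}}\mathcal{U}_{\delta}^{-1}$ with $L_{\delta}\in\mathscr{A}_{m}(\rho,\Lambda,\mu)$, choosing $\delta=t^{-1/2m}$ for $t>1$ kills the $e^{c''t}$ factor, and monotone approximation extends the estimate from compact to arbitrary closed $E,F$.

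\textbf{Main obstacle.} The delicate point is the G\aa{}rding-type lower bound for $\mathcal{Q}_{\lambda\phi}$ with loss of order exactly $\lambda^{2m}$ on $\|g\|_{L^{2}}^{2}$ and constants depending only on $(\rho,\Lambda,\mu)$. Writing the remainder against the \emph{original} form $\mathcal{Q}$ rather than against $\mathcal{Q}_{0}$ is what preserves the full range $\mu\in[0,1)$ (a naive split produces a spurious $\mu<1/2$ threshold); matching the exponent $2m$ is critical as well, because it aligns the blow-up rate with Lemma~\ref{le004} and allows the polynomial factors coming from the Leibniz-induced lower-order derivatives to be dominated by the exponential decay after the optimization $\lambda=c_{0}(d/t^{1/2m})^{1/(2m-1)}$.
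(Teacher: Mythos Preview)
Your proof is correct and follows essentially the same strategy as the paper: establish the twisted-gradient a priori bound via the G{\aa}rding-type lower bound for $\mathcal{Q}_{\lambda\phi}$ (this is precisely the content of \cite[Proposition~3.1]{B-K04-1} to which the paper defers, and the form estimate you use reappears verbatim as (\ref{eq25}) in the proof of Theorem~\ref{pro22}), then run Davies' argument and remove the $e^{c''t}$ factor by the dilation invariance of $\mathscr{A}_{m}(\rho,\Lambda,\mu)$. The only organizational difference is that you bound $\nabla^{m}e^{-tL_{\lambda\phi}}$ first and un-twist via Leibniz, picking up a harmless polynomial prefactor, whereas the paper (via \cite{B-K04-1}) targets $e^{\lambda\phi}\nabla^{m}e^{-tL}e^{-\lambda\phi}$ directly; both routes rest on the same lower bound and yield the same conclusion.
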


\begin{proof} By the proof of Theorem \ref{thm1}, we only need to  show that there exist constants $C,c>0$ independent of the potential $L\in \mathscr{A}_{m}(\rho,\Lambda,\mu)$, $\lambda\in \mathbb{R}$ and $\phi\in \mathcal {E}_{m}(\mathbb{R}^N)$ such that
\begin{eqnarray}\label{eq30}
\|e^{\lambda\phi}\sqrt{t}\nabla^{m}e^{-tL}e^{-\lambda\phi}f\|_{L^{2}(\mathbb{R}^N)}\leq C e^{c(1+\lambda^{2m})t}\|f\|_{L^{2}(\mathbb{R}^N)}
\end{eqnarray}
for all $t>0$.

Once (\ref{eq30}) holds, for arbitrary compact set $E,F\subset\mathbb{R}^N$ and $f\in L^{2}(\mathbb{R}^N)$ with supp$f\subset F$, the same procedure as in the proof of Theorem \ref{thm1} can lead to
\begin{eqnarray}\label{eq30'}
\|\sqrt{t}\nabla^{m}e^{-tH}f\|_{L^{2}(E)}\leq Ce^{c t} e^{-b\big{(}\frac{d(E,F)}{t^{1/2m}}\big{)}^{2m/(2m-1)}}\|f\|_{L^{2}(F)},
\end{eqnarray}
where the  constants $C, c, b>0$ are independent of the  $L\in \mathscr{A}_{m}(\rho,\Lambda,\mu)$.

To remove the term $e^{c t}$ for $t>1$. Similarly to the proof of Theorem \ref{thm1}, let  $L_{\delta}$ be the  operators defined by (\ref{scaling-operator}), it is easy to see that (\ref{eq30'}) uniformly holds for $L_{\delta}$ ($\delta>0$).
Therefore,
\begin{eqnarray}
\|\sqrt{t}\nabla^{m}e^{-tL}f\|_{L^{2}(E)}&=&\|\sqrt{t}\nabla^{m}e^{-t\delta^{2m}\mathcal {U}_{\delta}L_{\delta}\mathcal {U}^{-1}_{\delta}}f\|_{L^{2}(E)}\nonumber\\
&=&\|\mathcal {U}_{\delta}\sqrt{(\delta^{2m}t)}\nabla^{m}e^{-t\delta^{2m}L_{\delta}}\mathcal {U}^{-1}_{\delta}f\|_{L^{2}(E)}\nonumber\\
&\leq& Ce^{ct\delta^{2m}}e^{-b\big{(}\frac{d(\delta E,\delta F)}{\delta t^{1/2m}}\big{)}^{2m/(2m-1)}}\|f\|_{L^{2}(F)}.\nonumber
\end{eqnarray}
Then by taking $\delta=t^{-1/2m}$, we can obtain that
\begin{eqnarray}
\|\sqrt{t}\nabla^{m}e^{-tL}f\|_{L^{2}(E)}
&\leq& Ce^{c t\delta^{2m}}e^{-b\big{(}\frac{\delta d(E,F)}{\delta t^{1/2m}}\big{)}^{2m/(2m-1)}}\|f\|_{L^{2}(F)}\nonumber\\
&\leq& Be^{-b\big{(}\frac{d(E,F)}{t^{1/2m}}\big{)}^{2m/(2m-1)}}\|f\|_{L^{2}(F)}.\nonumber
\end{eqnarray}

It remains to prove (\ref{eq30}). In fact, (\ref{eq30}) can be easily obtained by using the same procedure as in the proof of \cite[Proposition 3.1]{B-K04-1}. Hence, we finish the proof.
\end{proof}

\subsection{The $L^{p}-L^{q}$ off-diagonal estimates for families of operators}\label{sec2-2}

This subsection is devoted to establishing $L^{p}$-$L^{2}$ ($L^{2}$-$L^{p}$) off-diagonal estimates for the following families of operators
$$\{e^{-tL}\}_{t>0},\ \ \{tLe^{-tL}\}_{t>0}\ \ \text{and}\ \
  \{\sqrt{t}\nabla^{m} e^{tL}\}_{t>0},$$
which is the bridge between $L^{p}-L^{q}$ estimates and uniformly boundedness.
 We first introduce the  $L^{p}$-$L^{q}$ estimates for a general family operators.

\begin{defn}\label{def2}\rm {({\it $L^p$-$L^q$  estimates for a family of
operators})\quad Let $\{S_{t}\}_{t>0}$ be a family of uniformly bounded operators on $L^{2}(\mathbb{R}^{N})$. We
say that $\{S_{t}\}_{t>0}$ satisfy the \emph{ $L^p$-$L^q$ estimates} for some
$p,q\in [1,\infty)$ with $p\leq q$ if
$$\|S_{t}f\|_{L^{q}(\mathbb{R}^N)}\leq
Ct^{\frac{N}{2m}(\frac{1}{q}-\frac{1}{p})}\|f\|_{L^{p}(\mathbb{R}^N)},$$ where
$C>0$, independent of $t$ and $f\in L^{2}(\mathbb{R}^{N})\cap
L^{p}(\mathbb{R}^{N})$. If $\{S_{t}\}_{t>0}$ satisfy an $L^p$-$L^p$ estimates, then $\{S_{t}\}_{t>0}$ are bounded on $L^p(\mathbb{R}^N)$
uniformly in $t$. In this case, we say $S_{t}$ is bounded on $L^p(\mathbb{R}^N)$.}
\end{defn}

Let us first state a useful result whose proof is easy and skipped here, one can see Hoffman and Martell \cite{H-M09} for a similar proof for special indexes.
\begin{lem}\label{le25}
If $\{T_{t}\}_{t>0}$ satisfy  $L^p$-$L^q$ estimates (resp.
off-diagonal estimates) and $\{S_{t}\}_{t>0}$ satisfy
$L^q$-$L^r$ estimates (resp. off-diagonal estimates), then $\{S_{t}T_{t}\}_{t>0}$ satisfy  $L^{p}$-$L^{r}$
estimates (resp. off-diagonal estimates).
\end{lem}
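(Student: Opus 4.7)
\textbf{Proof plan for Lemma \ref{le25}.}

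For the plain $L^p$-$L^q$ estimates statement, the approach is just direct composition: given $f\in L^{2}\cap L^{p}$, apply the $L^{p}$-$L^{q}$ bound for $T_{t}$ to conclude $\|T_{t}f\|_{L^{q}}\le C_{1} t^{\frac{N}{2m}(1/q-1/p)}\|f\|_{L^{p}}$, and then the $L^{q}$-$L^{r}$ bound for $S_{t}$ to conclude $\|S_{t}T_{t}f\|_{L^{r}}\le C_{2} t^{\frac{N}{2m}(1/r-1/q)}\|T_{t}f\|_{L^{q}}$. Multiplying the exponents gives the claimed $t^{\frac{N}{2m}(1/r-1/p)}$ factor. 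This is a one-line argument.

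For the off-diagonal part, the main issue is that a single composition lets $S_{t}$ and $T_{t}$ each contribute a Gaussian-type decay in distance, but the intermediate function $T_{t}f$ has no \emph{a priori} support control, so we cannot directly plug it as the input of the off-diagonal bound for $S_{t}$. The plan is to insert an intermediate set and split $T_{t}f$ on it. Given closed $E,F\subset\mathbb{R}^{N}$ and $f\in L^{2}\cap L^{p}$ supported in $F$, let $r=d(E,F)$ and set
\begin{equation*}
G:=\{x\in\mathbb{R}^{N}:\ d(x,F)>r/2\},\qquad G^{c}=\{x:\ d(x,F)\le r/2\}.
\end{equation*}
By the triangle inequality, $d(E,G^{c})\ge r/2$ and $d(F,G)\ge r/2$. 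Decompose
\begin{equation*}
S_{t}T_{t}f= S_{t}\bigl(\mathbf{1}_{G}T_{t}f\bigr)+S_{t}\bigl(\mathbf{1}_{G^{c}}T_{t}f\bigr).
\end{equation*}

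For the first term I would use the $L^{q}$-$L^{r}$ off-diagonal estimate for $S_{t}$ applied to the function $\mathbf{1}_{G}T_{t}f$ (supported in $G$) localized on $E$, paying the factor $e^{-c(d(E,G)/t^{1/2m})^{2m/(2m-1)}}$, and then bound the $L^{q}$-norm of $T_{t}f$ by the plain $L^{p}$-$L^{q}$ estimate for $T_{t}$ (which is the $E=F=\mathbb{R}^{N}$ case of its off-diagonal hypothesis). Since $d(E,G)\ge r/2$, this yields the desired exponential decay in $d(E,F)$ together with the right polynomial factor in $t$. For the second term I would reverse roles: use the $L^{q}$-$L^{r}$ estimate (without localization) for $S_{t}$, and then bound $\|\mathbf{1}_{G^{c}}T_{t}f\|_{L^{q}}=\|T_{t}f\|_{L^{q}(G^{c})}$ by the $L^{p}$-$L^{q}$ off-diagonal estimate for $T_{t}$ between $F$ and $G^{c}$, which uses $d(G^{c},F)\ge r/2$. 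Summing the two contributions, adjusting the constants $c$ only by a factor depending on $2m/(2m-1)$, gives the off-diagonal estimate for $\{S_{t}T_{t}\}_{t>0}$ between $F$ and $E$.

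The main (and essentially only) obstacle is the choice of the intermediate set $G$ so that simultaneously $d(E,G)$ and $d(G^{c},F)$ are comparable to $d(E,F)$; the symmetric definition above takes care of that, and the rest is bookkeeping on polynomial/exponential factors. One should also note that the argument works verbatim if $S_{t}$ is only known to satisfy an $L^{q}$-$L^{r}$ off-diagonal estimate (not just the diagonal $L^{q}$-$L^{r}$ bound), since we only use it in its global form in the second term and with a concrete pair of sets in the first; and the same remark applies to $T_{t}$. Thus both the $L^{p}$-$L^{r}$ and off-diagonal conclusions follow from the same decomposition.
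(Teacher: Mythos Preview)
Your overall strategy---compose directly for the plain estimates, and for the off-diagonal case split via an intermediate set at distance $r/2$ from $F$---is exactly the standard argument (the paper omits the proof and simply refers to Hofmann--Martell \cite{H-M09}). However, you have swapped the roles of the two terms. With your definition $G=\{x:d(x,F)>r/2\}$ one has $E\subset G$ and $F\subset G^{c}$, so in fact $d(E,G)=0$ and $d(G^{c},F)=0$; the two distance lower bounds you invoke when estimating each term are false. The correct pairing is the opposite one: for the first term $S_{t}(\mathbf{1}_{G}T_{t}f)$, apply the off-diagonal estimate for $T_{t}$ between $F$ and $G$ (this is where $d(F,G)\ge r/2$ enters) and then the global $L^{q}$--$L^{r}$ bound for $S_{t}$; for the second term $S_{t}(\mathbf{1}_{G^{c}}T_{t}f)$, apply the off-diagonal estimate for $S_{t}$ between $G^{c}$ and $E$ (using $d(E,G^{c})\ge r/2$) and then the global $L^{p}$--$L^{q}$ bound for $T_{t}$. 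With this correction your proof goes through exactly as you describe.
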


We also need another basic lemma related to the Sobolev embedding theorem.   Let $I_{m,N}$ to denote the following interval:
\begin{equation}\label{index}
I_{m,N}=\left\{
\begin{array}{ll}
{[}2,{2N\over N-2m}{]},\  & \hbox{$N>2m;$} \\
{[}2,\infty{)},\ \ &
\hbox{$N=2m;$} \\
{[}2,\infty{]},\ \ & \hbox{$N<2m.$}
\end{array}
\right.
\end{equation}
Denote $I'_{m,N}=\{p\in [1,2]:\ \ \frac{1}{p}+\frac{1}{p'}=1,\ p'\in I_{m,N}\}$.

\begin{lem}\label{embedding1}
Let $f\in W^{m,2}(\mathbb{R}^{N})$, $p\in I_{m,N}$ and $\theta={N\over2m}(1-{2\over p})\in[0,1]$. Then $f\in L^p(\mathbb{R}^{N})$ and there exists a constant $C_{m,N}>0$ such that the following inequality holds:
$$\|f\|_{L^{p}(\mathbb{R}^{N})}\le C_{m,N}\|(-\Delta)^{m/2}f\|^{\theta}_{L^{2}(\mathbb{R}^{N})}\|f\|^{1-\theta}_{L^{2}(\mathbb{R}^{N})}$$
\end{lem}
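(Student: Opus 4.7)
\textbf{Proof proposal for Lemma \ref{embedding1}.} The plan is to reduce the inequality to its two endpoints $p=2$ and $p=p^*$ (where $p^*$ denotes the endpoint determined by the Sobolev embedding in the three regimes of $I_{m,N}$), and then interpolate. The case $p=2$ is trivial with $\theta=0$, so everything reduces to establishing the endpoint and then applying H\"older.

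First I would record the scaling check. With $f_\lambda(x):=f(\lambda x)$ one has $\|f_\lambda\|_{L^p}=\lambda^{-N/p}\|f\|_{L^p}$, $\|f_\lambda\|_{L^2}=\lambda^{-N/2}\|f\|_{L^2}$, and $\|(-\Delta)^{m/2}f_\lambda\|_{L^2}=\lambda^{m-N/2}\|(-\Delta)^{m/2}f\|_{L^2}$. Equating the scaling exponents on both sides of the desired inequality forces $\theta=\tfrac{N}{2m}(1-\tfrac{2}{p})$, which both confirms the statement is dimensionally consistent and shows that it suffices to prove it for a fixed value of, say, $\|f\|_{L^2}$ (this will be useful in the sub-critical regime).

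Next I would handle the endpoints. \emph{Case $N>2m$:} the endpoint is $p^*=\tfrac{2N}{N-2m}$ with $\theta=1$, and the desired bound $\|f\|_{L^{p^*}}\le C\|(-\Delta)^{m/2}f\|_{L^2}$ is the classical Hardy--Littlewood--Sobolev/Sobolev embedding $\dot W^{m,2}\hookrightarrow L^{p^*}$, which follows on the Fourier side from $f=I_m((-\Delta)^{m/2}f)$ together with the $L^2$--$L^{p^*}$ boundedness of the Riesz potential $I_m$ of order $m$. \emph{Case $N=2m$:} there is no $L^\infty$ or $L^{p^*}$ endpoint, so I would instead prove directly, for each fixed $p\in[2,\infty)$, the Gagliardo--Nirenberg inequality $\|f\|_{L^p}\le C\|(-\Delta)^{m/2}f\|_{L^2}^{\theta}\|f\|_{L^2}^{1-\theta}$ with $\theta=1-2/p$ by Littlewood--Paley decomposition: bounding $\|\Delta_k f\|_{L^p}$ via Bernstein $\lesssim 2^{kN(1/2-1/p)}\|\Delta_k f\|_{L^2}$ and summing geometrically after cutting dyadic frequencies at the optimal threshold. \emph{Case $N<2m$:} the upper endpoint is $p=\infty$ with $\theta=N/(2m)$, obtained by writing $f(x)=\int_{\mathbb{R}^N}\hat f(\xi)e^{ix\cdot\xi}\,d\xi/(2\pi)^N$, splitting into $|\xi|\le R$ and $|\xi|>R$, applying Cauchy--Schwarz on each piece to get $|f(x)|\le C(R^{N/2}\|f\|_{L^2}+R^{N/2-m}\|(-\Delta)^{m/2}f\|_{L^2})$, and optimizing in $R$.

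Finally I would interpolate. For any intermediate $p\in(2,p^*)$ write $\tfrac{1}{p}=\tfrac{1-\lambda}{2}+\tfrac{\lambda}{p^*}$ and apply H\"older:
\begin{equation*}
\|f\|_{L^p}\le\|f\|_{L^2}^{1-\lambda}\|f\|_{L^{p^*}}^{\lambda}.
\end{equation*}
Substituting the endpoint bound from the appropriate case above, a direct computation yields the exponent $\lambda\cdot(\text{endpoint }\theta)=\tfrac{N}{2m}(1-\tfrac{2}{p})$ on $\|(-\Delta)^{m/2}f\|_{L^2}$, which matches the claimed $\theta$, and the remaining exponent on $\|f\|_{L^2}$ is $1-\theta$. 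This finishes the three regimes simultaneously. I do not expect a real obstacle here: the only mildly delicate point is the $N=2m$ case where no $L^{p^*}$ sits at the top, forcing the Littlewood--Paley argument rather than a pure H\"older interpolation between two Sobolev endpoints.
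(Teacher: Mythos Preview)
Your argument is correct: this is precisely the classical Gagliardo--Nirenberg interpolation inequality, and your endpoint-plus-H\"older scheme, with the three regimes $N>2m$, $N=2m$, $N<2m$ handled respectively by Sobolev/HLS, a direct Littlewood--Paley summation, and Fourier splitting with optimization in $R$, is a standard and complete derivation.

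The paper, however, does not prove this lemma at all. It is stated as a ``basic lemma related to the Sobolev embedding theorem'' and used as a black box (for instance in the proof of Theorem~\ref{pro22} via inequality~(\ref{equation-2})). So there is no ``paper's own proof'' to compare against; you have simply supplied a proof of a result the authors cite as known. Your write-up would stand on its own as a self-contained justification, and nothing in it conflicts with how the lemma is applied downstream.
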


The following proposition deals with the relations between $L^{p}$ bounded, $L^{p}-L^{2}$ estimates and
$L^{p}-L^{2}$ off-diagonal estimates  for families of operators related to semigroup $e^{-tL}$, which is also useful in some other applications. Here and in the following, for any $1\leq p<\frac{N}{m}$, denote by $p^{*}=\frac{Np}{N-mp}$ the Sobolev exponent of $p$.

\begin{pro}\label{pro1'} Let $p\in [1,2)$ and $L=L_{0}+V\in \mathscr{A}_{m}$ where $L_{0}$ is a homogeneous elliptic operator of order 2m in divergence form. Denote $\{T_{t}\}_{t>0}$ by the following families of operators
$$\{e^{-tL}\}_{t>0},\ \ \{tLe^{-tL}\}_{t>0}\ \ \text{and}\ \
  \{\sqrt{t}\nabla^{m} e^{tL}\}_{t>0}.$$
Then

{\rm(i)}  If $T_{t}$ is bounded on $L^{p}(\mathbb{R}^N)$, then it satisfies the
$L^{p}$-$L^2$ estimates.

{\rm(ii)}  If $T_{t}$ satisfies the $L^{p}$-$L^2$ estimates, then
for all $p<q<2$ it satisfies the $L^{q}$-$L^2$ off-diagonal estimates.

{\rm(iii)}  If $T_{t}$ satisfies the  $L^{p}$-$L^2$ off-diagonal
estimates, then it is bounded on $L^p(\mathbb{R}^N)$.
\end{pro}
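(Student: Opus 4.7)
I would treat the three parts in reverse order. For (iii), the plan is a standard annular decomposition. Tile $\mathbb{R}^N$ by balls $\{B_k\}$ of radius $t^{1/2m}$; for each $B_k$ use local Hölder $\|T_tf\|_{L^p(B_k)}\leq |B_k|^{1/p-1/2}\|T_tf\|_{L^2(B_k)}$, then decompose $f=\sum_{j\geq 0}f\chi_{C_j(B_k)}$ into concentric annuli $C_j(B_k)=2^{j+1}B_k\setminus 2^jB_k$ (with $C_0=2B_k$) and apply the hypothesized $L^p$-$L^2$ off-diagonal bound ring by ring. The Hölder prefactor $t^{\frac{N}{2m}(1/p-1/2)}$ cancels the time-gain $t^{\alpha_p}$ with $\alpha_p=\frac{N}{2m}(\tfrac12-\tfrac1p)$, and the Gaussian-type tail $\exp(-c\,2^{j\cdot 2m/(2m-1)})$ dominates the polynomial overlap count $\sim 2^{jN}$ of the annular covering; summation via Minkowski then yields uniform $L^p$-boundedness.

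For (ii), Riesz-Thorin bilinear interpolation is the tool. The endpoints are the hypothesized $L^p$-$L^2$ estimate (read with $E=F=\mathbb{R}^N$, so no spatial decay factor) and the $L^2$-off-diagonal estimate of Theorem~\ref{thm1} or Theorem~\ref{pro20}. Interpolating with parameter $\theta\in(0,1)$ produces, for each $q\in(p,2)$ with $\tfrac{1}{q}=\tfrac{1-\theta}{p}+\tfrac{\theta}{2}$, an $L^q$-$L^2$ off-diagonal bound with time factor $t^{(1-\theta)\alpha_p}$ and Gaussian-type decay raised to power $\theta$; a short check using $\alpha_q=\tfrac{N}{2m}(\tfrac12-\tfrac1q)$ gives $(1-\theta)\alpha_p=\alpha_q$, as required by Definition~\ref{def1}.

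For (i), my plan is Gagliardo-Nirenberg combined with duality. Theorem~\ref{pro20} with $E=F=\mathbb{R}^N$ yields $\|\nabla^m e^{-tL}\|_{L^2\to L^2}\lesssim t^{-1/2}$, and inserting this with the $L^2$-uniform bound into Lemma~\ref{embedding1} gives $\|e^{-tL}f\|_{L^{p'}}\lesssim t^{-\theta/2}\|f\|_{L^2}$ with $\theta=\tfrac{N}{2m}(1-\tfrac{2}{p'})$; since $-\theta/2=\alpha_p$ and $e^{-tL}$ is self-adjoint, duality delivers the $L^p$-$L^2$ estimate for $e^{-tL}$. The two remaining families then follow from the semigroup factorizations $tLe^{-tL}=2e^{-(t/2)L}\cdot(t/2)Le^{-(t/2)L}$ and $\sqrt{t}\nabla^m e^{-tL}=\sqrt{2}\,(\sqrt{t/2}\nabla^m e^{-(t/2)L})\,e^{-(t/2)L}$, composing the just-proved $L^p$-$L^2$ bound for $e^{-tL}$ with the $L^2$-uniform bound on the remaining factor via Lemma~\ref{le25}. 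This argument covers the range $p\in I'_{m,N}=[\tfrac{2N}{N+2m}\vee 1,2]$ without using the $L^p$-boundedness hypothesis at all. The hypothesis enters only in the regime $N>2m$ and $p<\tfrac{2N}{N+2m}$, where I would iterate via $T_{kt}=(T_t)^k$, at each step interpolating between the given $L^p$-$L^p$ bound and an $L^r$-$L^2$ bound for $r\in I'_{m,N}$ to pick up a Sobolev-improving $L^{q_j}$-$L^{q_{j+1}}$ estimate, and chaining finitely many such steps so the intermediate time-exponents telescope to $\alpha_p$. I expect this telescoping iteration in the small-$p$ regime to be the main technical obstacle: one must choose the intermediate exponents so that the chain reaches $L^2$ in finitely many steps with the correct cumulative time-factor.
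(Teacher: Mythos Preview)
Your treatment of (ii) and (iii) is correct and follows the standard lines the paper defers to references for; likewise your Gagliardo--Nirenberg plus duality argument for (i) in the range $p\in I'_{m,N}$ essentially reproduces the paper's Theorem~\ref{pro22}. The gap is in the small-$p$ regime $N>2m$, $p<\tfrac{2N}{N+2m}$. Your iteration relies on $T_{kt}=(T_t)^k$, but this identity holds \emph{only} for the semigroup $T_t=e^{-tL}$: for $T_t=tLe^{-tL}$ one has $(T_t)^k=t^kL^ke^{-ktL}\neq (kt)Le^{-ktL}$, and for $T_t=\sqrt{t}\nabla^m e^{-tL}$ the composite $(T_t)^2$ is not even well-defined on scalar functions since $\nabla^m$ is tensor-valued. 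So your interpolation chain, while workable for $e^{-tL}$, cannot produce the $L^p$--$L^2$ estimate for the other two families from the hypothesis on those families.

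The paper's fix (written out for $T_t=\sqrt{t}\nabla^m e^{-tL}$) is to route the iteration through the semigroup via Sobolev embedding. The hypothesis, interpolated with the $L^2$ case, gives $\|\sqrt{t}\nabla^m e^{-tL}\|_{L^r\to L^r}\leq C$ for all $r\in[p,2]$; the embedding $W^{m,r}\hookrightarrow L^{r^*}$ then yields the one-step smoothing $\|e^{-tL}\|_{L^r\to L^{r^*}}\leq Ct^{-1/2}$ for each such $r$. Now iterate the \emph{semigroup}---where composition is legitimate---along $r_0=p$, $r_j=r_{j-1}^*$ until reaching some $r_{j_0}\in(\tfrac{2N}{N+2m},2)$, obtaining $e^{-tL}\colon L^p\to L^{r_{j_0}}$ with the correct time-factor; combined with the unconditional $L^{r_{j_0}}$--$L^2$ estimate this gives $e^{-tL}\colon L^p\to L^2$, and only then does the factorization $\sqrt{t}\nabla^m e^{-tL}=\sqrt{2}(\sqrt{t/2}\nabla^m e^{-(t/2)L})e^{-(t/2)L}$ finish the argument. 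The missing idea is that the hypothesis on $T_t$ must first be converted, via Sobolev, into a smoothing estimate for $e^{-tL}$, and the iteration must take place on the semigroup rather than on $T_t$ itself.
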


\begin{proof} Notice that the proofs for
conclusions (ii) and (iii) are essentially similar to the ones in
\cite{As} and \cite{D-D-Y1}. We hence only give the proof of (i).

We first prove that (i) is  true for $\{\sqrt{t}\nabla^m e^{-tL}\}_{t>0}$. Notice that
 $\sqrt{t}\nabla^m e^{-tL}$ is bounded on $L^{2}(\mathbb{R}^N)$ (see Theorem \ref{pro20}) and
$$\sqrt{t}\nabla^m e^{-tL}=\sqrt{2}\sqrt{\frac{t}{2}}\nabla^m e^{-\frac{t}{2}L}e^{-\frac{t}{2}L},$$
thus it suffices to prove that $e^{-tL}$ satisfies the
$L^{p}$-$L^2$ estimates.

If $N\leq 2m$, it follows from Lemma \ref{embedding1} that  for all $r\in [2,\infty)$ and $f\in L^{2}(\mathbb{R}^N)\cap L^{r}(\mathbb{R}^N)$,
\begin{eqnarray}
\|e^{-tL}f\|_{L^{r}(\mathbb{R}^N)}\leq \|\nabla^{m}e^{-tL}f\|^{\theta}_{L^{2}(\mathbb{R}^N)}\|e^{-tL}f\|^{1-\theta}_{L^{2}(\mathbb{R}^N)}\leq Ct^{-\frac{N}{2m}(1/2-1/r)}\|f\|_{L^{2}(\mathbb{R}^N)},\nonumber
\end{eqnarray}
where $\theta={N\over2m}(1-{2\over r})$. Thus, by a standard duality argument, we have that $e^{-tL}$ satisfies the $L^{p}-L^{2}$ estimate for all $p\in (1,2]$.

If $N>2m$, it follows that $p< 2<\frac{N}{m}$. Then by Lemma \ref{pro22} and duality, we obtain that $e^{-tL}$ satisfies  the $L^{p}-L^{2}$ estimate for all $p\in [\frac{2N}{N+2m},2]$.  Hence it suffices to consider the case $p<\frac{2N}{N+2m}$.  To do this,
notice that
$\sqrt{t}\nabla^m e^{-tL}$ is bounded on $L^{r}(\mathbb{R}^N)$ for all $r\in [p,2]$,
then it follows from the Sobolev embedding theorem that
\begin{eqnarray}\label{extra9}
\|e^{-tL}f\|_{L^{r^{\ast}}(\mathbb{R}^N)}\leq Ct^{-\frac{1}{2}}\|f\|_{L^{r}(\mathbb{R}^N)},\ \ \ \ \ r\in [p,2].
\end{eqnarray}
Let $r_{0}=p$ and $r_{j}=r^{*}_{j-1}=\frac{Nr_{0}}{N-mjr_{0}}$ for $j\in \mathbb{N}$ with $j< \frac{N}{r_{0}m}$, we have
\begin{eqnarray}\label{eq-ext-1}
\|e^{-j_{0}tL}f\|_{L^{r_{j}}(\mathbb{R}^N)}
&\leq& \|e^{-tL}\|_{L^{r_{j-1}}-L^{r_{j}}}\cdots\|e^{-tL}\|_{L^{r_{0}}-L^{r_{1}}}\|f\|_{L^{r_{0}}}\nonumber\\
&\leq& Ct^{-\frac{j}{2}}\|f\|_{L^{r_{0}}(\mathbb{R}^N)}.
\end{eqnarray}
We choose $j_{0}\geq 1$ such that
$\frac{2N}{N+2m}<r_{j_{0}}<2$, then it follows from (\ref{eq-ext-1}) that  $e^{-tL}$ satisfies the  $L^{p}-L^{r_{j_{0}}}$  estimates, which combined the fact
that $e^{-tL}$ satisfies the  $L^{r_{j_{0}}}-L^{2}$  estimate can finish the proof.

It remains to show that (i) holds for $\{e^{-tL}\}_{t>0}$ and $\{tLe^{-tL}\}_{t>0}$. In fact, the same argument as in the proof of \cite[Theorem 3.1]{D-D-Y1} can be applied to show that (i) holds for $\{e^{-tL}\}_{t>0}$. Moreover, by the identity $tLe^{-tL}=2\frac{t}{2}Le^{-\frac{t}{2}L}e^{-\frac{t}{2}L}$ and the fact that  $tLe^{-tL}$ is bounded on $L^{2}(\mathbb{R}^N)$, (i) is also true for $\{tLe^{-tL}\}_{t>0}$.  Hence we finish the whole proof.
\end{proof}

\begin{thm}\label{pro22}  Let $L=L_{0}+V\in \mathscr{A}_{m}$ where $L_{0}$ is a homogeneous elliptic operator of order 2m in divergence form. Then for any $p\in I_{m,N}$ (resp. $p\in I'_{m,N}$), the following statements hold:

{\rm (i)}\ $\{e^{-tL}\}_{t>0}$ and $\{tLe^{-tL}\}_{t>0}$ satisfy the  $L^{2}$-$L^{p}$ (resp. $L^{p}$-$L^{2}$) estimates.

{\rm (ii)} $\{e^{-tL}\}_{t>0}$ and $\{tLe^{-tL}\}_{t>0}$ satisfy $L^{2}$-$L^{p}$ (resp. $L^{p}$-$L^{2}$) off-diagonal estimates.

{\rm (iii)} $\{e^{-tL}\}_{t>0}$ and $\{tLe^{-tL}\}_{t>0}$ are bounded on $L^{p}(\mathbb{R}^{N})$.
\end{thm}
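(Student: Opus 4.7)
The plan is to establish (i), (ii), (iii) in that order, handling the range $p\in I_{m,N}$ (i.e.\ $p\geq 2$) first and deducing the dual range $p\in I'_{m,N}$ (i.e.\ $p\leq 2$) by $L^{2}$-duality. Duality is available because every $L\in\mathscr{A}_{m}$ is self-adjoint, so $e^{-tL}$ and $tLe^{-tL}$ are self-adjoint bounded operators on $L^{2}(\mathbb{R}^{N})$; in particular any $L^{p}$-$L^{q}$ (resp.\ off-diagonal) estimate transfers to an $L^{q'}$-$L^{p'}$ (resp.\ off-diagonal) estimate for the same operator, with the same constants.

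For (i), apply the Gagliardo-Nirenberg inequality of Lemma \ref{embedding1} to $u_{t}:=e^{-tL}f$ and $v_{t}:=tLe^{-tL}f$ for each fixed $t>0$. With $\theta=\frac{N}{2m}\bigl(1-\frac{2}{p}\bigr)\in[0,1]$ this gives
\[
\|u_{t}\|_{L^{p}}\leq C\|\nabla^{m}u_{t}\|_{L^{2}}^{\theta}\|u_{t}\|_{L^{2}}^{1-\theta},
\]
and the analogous bound for $v_{t}$. Theorems \ref{thm1} and \ref{pro20} supply the $L^{2}$-inputs $\|u_{t}\|_{L^{2}}\leq C\|f\|_{L^{2}}$ and $\|\sqrt{t}\,\nabla^{m}u_{t}\|_{L^{2}}\leq C\|f\|_{L^{2}}$, which immediately yield the $L^{2}$-$L^{p}$ estimate $\|u_{t}\|_{L^{p}}\leq Ct^{-\frac{N}{2m}(1/2-1/p)}\|f\|_{L^{2}}$. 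For $v_{t}$, use the factorization $tLe^{-tL}=2\,e^{-(t/2)L}\circ(t/2)Le^{-(t/2)L}$, so that $\nabla^{m}v_{t}=2\nabla^{m}e^{-(t/2)L}\bigl[(t/2)Le^{-(t/2)L}f\bigr]$; combining $\|\nabla^{m}e^{-(t/2)L}g\|_{L^{2}}\leq Ct^{-1/2}\|g\|_{L^{2}}$ with $\|(t/2)Le^{-(t/2)L}f\|_{L^{2}}\leq C\|f\|_{L^{2}}$ gives $\|\nabla^{m}v_{t}\|_{L^{2}}\leq Ct^{-1/2}\|f\|_{L^{2}}$, which closes the $L^{2}$-$L^{p}$ estimate for $tLe^{-tL}$. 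Dualizing yields the $L^{p}$-$L^{2}$ version for $p\in I'_{m,N}$.

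For (ii), split each factor in half: $e^{-tL}=e^{-(t/2)L}\circ e^{-(t/2)L}$ and $tLe^{-tL}=2\,e^{-(t/2)L}\circ(t/2)Le^{-(t/2)L}$. The left factor satisfies the $L^{2}$ off-diagonal estimate of Theorem \ref{thm1}, while the right factor satisfies the $L^{2}$-$L^{p}$ estimate just established in (i). Lemma \ref{le25}, applied by viewing a plain estimate as a trivial off-diagonal estimate (or equivalently, by decomposing the source into annuli $F_{k}=\{x:\,2^{k}t^{1/2m}\leq d(x,E)<2^{k+1}t^{1/2m}\}$), then produces the desired $L^{2}$-$L^{p}$ off-diagonal estimate: the $t^{-\frac{N}{2m}(1/2-1/p)}$ scaling comes from the second operator and the exponential decay from the first. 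Duality furnishes the companion $L^{p}$-$L^{2}$ off-diagonal estimates for $p\in I'_{m,N}$.

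For (iii), when $p\in I'_{m,N}$ with $p<2$ the $L^{p}$-$L^{2}$ off-diagonal estimate of (ii) together with Proposition \ref{pro1'}(iii) yields $L^{p}$-boundedness of both families; the case $p=2$ is immediate. For $p\in I_{m,N}$ with $p>2$, apply the previous step at $p'\in I'_{m,N}$ to obtain $L^{p'}$-boundedness and then dualize, using self-adjointness on $L^{2}$, to get $L^{p}$-boundedness (the endpoints $p=\infty$ when $N<2m$ and the Sobolev exponent $p=2N/(N-2m)$ when $N>2m$ are covered by the same argument since Lemma \ref{embedding1} is valid there). The most delicate point is (ii): one must verify that composing an off-diagonal bound with a bare $L^{2}$-$L^{p}$ estimate preserves the exponential weight. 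This is exactly what the annular-decomposition argument behind Lemma \ref{le25} achieves, since the off-diagonal factor $\exp\bigl(-c(2^{k}t^{1/2m}/t^{1/2m})^{2m/(2m-1)}\bigr)$ inherited from the first factor decays geometrically fast in $k$, so that summing over the annuli produces a single off-diagonal weight of the same form.
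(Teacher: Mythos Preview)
Your argument for (i) is correct and in fact slightly cleaner than the paper's: the paper establishes (i) as a byproduct of the perturbation machinery, whereas you apply Lemma~\ref{embedding1} directly to $e^{-tL}f$ and feed in the global $L^{2}$ bounds already provided by Theorems~\ref{thm1} and~\ref{pro20}. Part (iii) is also fine once (ii) is in hand.

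The gap is in (ii). Composing an $L^{2}$--$L^{2}$ off-diagonal estimate with a \emph{bare} $L^{2}$--$L^{p}$ estimate does \emph{not} produce an $L^{2}$--$L^{p}$ off-diagonal estimate, and Lemma~\ref{le25} does not claim this. Concretely, write $e^{-tL}f=S_{t/2}T_{t/2}f$ with $T$ (applied first) enjoying the $L^{2}$ off-diagonal bound and $S$ (applied second) only the plain $L^{2}$--$L^{p}$ bound. For $f$ supported in $F$ one has
\[
\|S_{t/2}T_{t/2}f\|_{L^{p}(E)}\le \|S_{t/2}T_{t/2}f\|_{L^{p}(\mathbb{R}^{N})}\le Ct^{\frac{N}{2m}(\frac{1}{p}-\frac{1}{2})}\|T_{t/2}f\|_{L^{2}(\mathbb{R}^{N})}\le Ct^{\frac{N}{2m}(\frac{1}{p}-\frac{1}{2})}\|f\|_{L^{2}},
\]
with no dependence on $d(E,F)$. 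Your annular decomposition does not rescue this: if you cut $f$ into shells $F_{k}$ around $E$, the only nonempty shells are those with $2^{k}t^{1/2m}\gtrsim d(E,F)$, and for each such $k$ the above chain gives no decay (the off-diagonal of $T$ is used up bounding $\|T_{t/2}(\chi_{F_{k}}f)\|_{L^{2}}$ over \emph{all} of $\mathbb{R}^{N}$, not near $E$). If instead you cut the intermediate function $T_{t/2}f$ into shells around $F$, the shell containing $F$ carries full $L^{2}$ mass and $S$ spreads it everywhere, again killing the decay. Either way, one factor must itself carry an off-diagonal bound at the $L^{p}$ level. (At best, interpolating your (i) with the $L^{2}$ off-diagonal of Theorem~\ref{thm1} gives $L^{2}$--$L^{q}$ off-diagonal only for $2<q<p$, so you lose the endpoint $p=\tfrac{2N}{N-2m}$, which is precisely the exponent needed for the weak-type conclusion in Theorem~\ref{thm4}(ii).)

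The paper avoids this by running the Sobolev step \emph{inside} Davies' perturbation: it applies Lemma~\ref{embedding1} to $g_{t}=e^{-tL_{\lambda\phi}}g$ for the twisted operator $L_{\lambda\phi}=e^{\lambda\phi}Le^{-\lambda\phi}$, controls $\|(-\Delta)^{m/2}g_{t}\|_{L^{2}}$ via the form inequality and Lemma~\ref{le004}, and obtains
\[
\|e^{-tL_{\lambda\phi}}g\|_{L^{p}}\le B\,t^{-\frac{N}{2m}(\frac{1}{2}-\frac{1}{p})}e^{b(1+\lambda^{2m})t}\|g\|_{L^{2}}
\]
uniformly in $\lambda,\phi$. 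Optimising over $\lambda$ then gives the genuine $L^{2}$--$L^{p}$ off-diagonal estimate (with an extra $e^{\omega t}$), and the scaling invariance of $\mathscr{A}_{m}(\rho,\Lambda,\mu)$ removes the $e^{\omega t}$. In short, the Sobolev embedding must be applied to the conjugated semigroup, not to $e^{-tL}$ itself; this is the step your argument is missing.
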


\begin{proof}
Notice that  (iii) can be easily obtained by Proposition \ref{pro1'} and a duality argument if (ii) holds.
On the other hand, since $\{tLe^{-tL}\}_{t>0}$ is bounded on $L^{2}(\mathbb{R}^{N})$ and  satisfies $L^{2}$ off-diagonal estimates, then by the identity $tLe^{-tH}=2(e^{-tL/2})(\frac{t}{2}Le^{-tL/2})$, Lemma \ref{le25} and duality, we only need to prove that the conclusions (i) and (ii) are true for $\{e^{-tL}\}_{t>0}$ and $p\in I_{m,N}$.


We begin with the proof of (ii). For $L\in\mathscr{A}_{m}$, there exist constants $0<\rho<\Lambda<\infty$ and $\mu\in[0,1)$ such that $L\in \mathscr{A}_{m}(\rho,\Lambda,\mu)$. Let $\lambda\in \mathbb{R}$,  $\phi\in \mathcal {E}_{m}(\mathbb{R}^{N})$ and $L_{\lambda\phi}$ be the perturbed operator defined in (\ref{perb-oper}). Write $g_{t}:=e^{-tL_{\lambda\phi}}g $ for each $g\in L^{2}(\mathbb{R}^{N})$, it follows that
$g_{t}\in D(H_{\lambda\phi})\subset W^{m,2}(\mathbb{R}^{N})$. Then by Lemma \ref{embedding1} we have that
\begin{eqnarray}\label{equation-2}
\|g_{t}\|_{L^{p}(\mathbb{R}^{N})}&\leq& C_{m,N}\|(-\Delta)^{m/2}g_{t}\|^{\theta}_{L^{2}(\mathbb{R}^{N})}\|g_t\|^{1-\theta}_{L^{2}(\mathbb{R}^{N})}
\end{eqnarray}
where $p\in I_{m,N}$ and $\theta={N\over2m}(1-{2\over p})\in [0,1]$. Notice that for all  $f\in D(\mathcal {Q})\subset W^{m,2}(\mathbb{R}^N)$, it is easy to show that
\begin{eqnarray}\label{eq25}
|\Re\mathcal {Q}_{\lambda\phi}(f,f)-\mathcal {Q}(f,f)|&\leq&|\mathcal {Q}_{\lambda\phi}(f,f)-\mathcal {Q}(f,f)|\nonumber\\
&\leq&\varepsilon\mathcal {Q}_{0}(f,f)+b_{\varepsilon}(1+\lambda^{2m})\|f\|^{2}_{L^{2}(\mathbb{R}^N)}\nonumber\\
&\leq&\frac{\varepsilon}{1-\mu}\mathcal {Q}(f,f)+b_{\varepsilon}(1+\lambda^{2m})\|f\|^{2}_{L^{2}(\mathbb{R}^N)},\nonumber
\end{eqnarray}
which combined Lemma \ref{le004} imply that
\begin{eqnarray}\label{L^2-L^p}
\|(-\Delta)^{m/2}g_{t}\|_{L^{2}(\mathbb{R}^{N})}\leq  C\big{(}\mathcal {Q}(f_{t}, f_{t})\big{)}^{1/2}
&\leq& C'\big{\{}\Re \mathcal{Q}_{\lambda\phi}(g_t,g_t)+(1+\lambda^{2m})\langle g_t, g_t\rangle\big{\}}^{1/2} \nonumber \\
&\leq& C'\big{\{} |\langle L_{\lambda\phi}g_t,g_t\rangle|+(1+\lambda^{2m})\langle g_t, g_t\rangle\big{\}}^{1/2} \nonumber \\
&\leq& C''\big{\{}t^{-1}+(1+\lambda^{2m})\big{\}}^{1/2}e^{c(1+\lambda^{2m})t}\|g\|_{L^{2}(\mathbb{R}^{N})}\nonumber\\
&\leq& D t^{-1/2}e^{(c+1)(1+\lambda^{2m})t}\|g\|_{L^{2}(\mathbb{R}^{N})},
\end{eqnarray}
 where the constants $C, C', C'',D,c$  are all independent of the choices of $L\in \mathscr{A}_{m}(\rho,\Lambda,\mu)$, $\lambda\in \mathbb{R}$ and  $\phi\in \mathcal {E}_{m}(\mathbb{R}^{N})$.  Therefore it follows from (\ref{equation-2}) and (\ref{L^2-L^p}) that
\begin{eqnarray}\label{L^2-L^pa}
 \|e^{-tL_{\lambda\phi}}g\|_{L^{p}(\mathbb{R}^{N})}&\leq& B t^{-{N\over2m}({1\over 2}-{1\over p})}e^{b(1+\lambda^{2m})t}\|g\|_{L^{2}(\mathbb{R}^{N})}.
\end{eqnarray}
 By (\ref{L^2-L^pa}) and the similar procedure as in the proof of Theorem \ref{thm1},  there exist constants $\omega, c, C>0$ independent of $L\in \mathscr{A}_{m}(\rho,\Lambda,\mu)$ such that
\begin{eqnarray}\label{eq38}
\|e^{-tL}f\|_{L^{p}(E)}\leq Ct^{-{N\over2m}({1\over 2}-{1\over p})}e^{\omega t}e^{-c\big{(}\frac {d(E,F)}{t^{1/2m}}\big{)}^{2m/(2m-1)}}\|f\|_{L^{2}(F)},
\end{eqnarray}
for arbitrary disjoint compact set $E,F\subset\mathbb{R}^N$, $t>0$ and $f\in L^{2}(\mathbb{R}^N)$ with supp$f\subset F$.  Finally, we can again use the the same scaling method as in the proof of Theorem \ref{thm1} to  get rid of the term $e^{\omega t}$ in the $(\ref{eq38})$ for any $p\in I_{m,N}$.
Thus we finish the proof of (ii).

Now we turn to prove (i). In fact, we can obtain (i) by using the same procedure as above. Hence we finish the whole proof.
\end{proof}


\begin{thm}\label{thm003} Let $L=L_{0}+V\in \mathscr{A}_{m}$ where $L_{0}$ is a homogeneous elliptic operator of order 2m in divergence form defined by (\ref{divergence}).
Then we have

{\rm (i)} $\{\sqrt{t}\nabla^m e^{-tL}\}_{t>0}$ satisfy the  $L^{p}-L^{2}$ estimates  for all $p\in I'_{m,N}$.

{\rm (ii)} $\{\sqrt{t}\nabla^m e^{-tL}\}_{t>0}$ satisfy  $L^{p}-L^{2}$ off-diagonal estimates  for all $p\in I'_{m,N}$.

{\rm (iii)} $\{\sqrt{t}\nabla^m e^{-tL}\}_{t>0}$ is bounded on $L^{p}(\mathbb{R}^{N})$ for all $p\in I'_{m,N}$.
\end{thm}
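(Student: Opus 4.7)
The plan is to reduce everything to results already established for $\{e^{-tL}\}_{t>0}$ and for the operator $\sqrt{t}\nabla^{m}e^{-tL}$ on $L^{2}$, via the semigroup factorization
$$\sqrt{t}\,\nabla^{m} e^{-tL} \;=\; \sqrt{2}\,\bigl(\sqrt{t/2}\,\nabla^{m} e^{-(t/2)L}\bigr)\circ e^{-(t/2)L},$$
and then invoke Lemma \ref{le25} to multiply the two factors together. The key observation is that the first factor is uniformly $L^{2}$-bounded (indeed it enjoys $L^{2}$ off-diagonal estimates by Theorem \ref{pro20}), while the second factor carries the $L^{p}\to L^{2}$ mapping information supplied by Theorem \ref{pro22}.

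For part (i), I would combine the $L^{p}$-$L^{2}$ estimate of $e^{-(t/2)L}$ from Theorem \ref{pro22}(i) with the uniform $L^{2}$-boundedness of $\sqrt{t/2}\,\nabla^{m}e^{-(t/2)L}$ (Theorem \ref{pro20}), and apply Lemma \ref{le25}. The exponent in $t$ matches, since the $L^{2}$-factor contributes no extra power of $t$ and the $L^{p}$-$L^{2}$ estimate contributes exactly $t^{-\frac{N}{2m}(\frac{1}{p}-\frac{1}{2})}$. For part (ii), I would apply the same factorization, but this time pair the $L^{p}$-$L^{2}$ off-diagonal estimate of $e^{-(t/2)L}$ (Theorem \ref{pro22}(ii)) with the $L^{2}$ off-diagonal estimate of $\sqrt{t/2}\,\nabla^{m}e^{-(t/2)L}$ (Theorem \ref{pro20}). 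Lemma \ref{le25} then yields the desired $L^{p}$-$L^{2}$ off-diagonal estimate for the composition, with the exponential Gaussian-type decay factor preserved (the two decay factors multiply, and after adjusting the constant $b$ the same form of the exponent survives). For part (iii), once (ii) has been established, the implication in Proposition \ref{pro1'}(iii) applied to the family $\{\sqrt{t}\,\nabla^{m}e^{-tL}\}_{t>0}$ immediately gives the uniform $L^{p}$-boundedness.

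There is no truly hard step once the factorization is in place: the main subtlety I would have to be careful about is the range of $p$. Since $I'_{m,N}\subset[1,2]$, the second factor genuinely lives in the regime where Theorem \ref{pro22} applies, so the chain of estimates closes for every admissible $p$. One should also verify that when composing the two families via Lemma \ref{le25}, the semi-distance $d(E,F)$ between supports is respected; this is a routine triangle-inequality splitting (insert an intermediate annulus around $F$) and is the standard way of combining off-diagonal bounds, so it should not cause trouble. Thus, aside from bookkeeping, the proof reduces to quoting Theorems \ref{pro20}, \ref{pro22}, Lemma \ref{le25}, and Proposition \ref{pro1'}(iii) in the order outlined above.
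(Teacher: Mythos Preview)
Your proposal is correct and follows essentially the same route as the paper: the authors use exactly the factorization $\sqrt{t}\,\nabla^{m}e^{-tL}=\sqrt{2}\,(\sqrt{t/2}\,\nabla^{m}e^{-tL/2})(e^{-tL/2})$, combine the $L^{2}$ (off-diagonal) bound from Theorem~\ref{pro20} with the $L^{p}$--$L^{2}$ information for $e^{-tL}$ from Theorem~\ref{pro22}, and then invoke Proposition~\ref{pro1'} to pass between the three formulations. Your write-up is in fact more explicit than the paper's (you spell out the role of Lemma~\ref{le25} and of Proposition~\ref{pro1'}(iii)), but the underlying argument is the same.
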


\begin{proof}
As done in the proof of Theorem \ref{pro22}, we just prove the case $N>2m$ for simplicity.
Notice that
$$\sqrt{t}\nabla^{m}e^{-tL}=\sqrt{2}(\sqrt{t/2}\nabla^{m}e^{-tL/2})(e^{-tL/2}),$$
then it follows from Proposition \ref{pro1'} and a duality process that all conclusions hold.
Hence the whole proof of theorem could be finished.
\end{proof}

\section{The Riesz transform for $q\leq 2$}\label{sec3'}

Let $L=L_{0}+V\in \mathscr{A}_{m}$ where $L_{0}$ is a homogeneous elliptic operator of order $2m$ in divergence form defined by (\ref{divergence}). Notice that by (\ref{condition}), we have
\begin{equation}\label{L2bound}
\|\nabla^m f\|_{L^2(\mathbb{R}^{N})}\le C\|(-\Delta)^{m/2}f\|_{L^2(\mathbb{R}^{N})}\le C'\|L^{1/2}f\|_{{L^2(\mathbb{R}^{N})}},
\end{equation}
which gives that ${\rm ker}(L^{1/2})=\{0\}$ and $\overline{{\rm Ran}(L^{1/2})}=L^2(\mathbb{R}^{N})$. Hence $\nabla^m L^{-1/2}$ can extend to a bounded operator on $L^2(\mathbb{R}^{N})$. However, it is not obvious if $\nabla^m L^{-1/2}$ defined by (\ref{Riesz}) is bounded on $L^{q}$ for $q\neq2$. In this section, we will apply the off-diagonal estimates to  investigate the $L^{q}$ boundedness of $\nabla^m L^{-1/2}$ for $q<2$.

Let us recall two important lemmas which treat  the boundedness of general  Calder\'{o}n-Zygmund type operators on $L^q$.
For a ball $B\subset \mathbb{R}^{N}$ and $\lambda>0$, we denote by
$\lambda B$ the ball with same center and radius $\lambda$ times
that of $B$. We set
$$S_{1}(B)=4B,\quad  S_j(B)=2^{j+1}B\setminus2^{j}B\  {\rm for}\ \ j\ge2.$$
Denote by $\mathcal {M}$ the Hardy-littlewood maximal operator
$$\mathcal {M}(f)(x)=\sup_{x\in B}\frac{1}{|B|}\int_{B}|f(y)|dy$$
where $B$ ranges over all open balls (or cubes) containing $x$.

\begin{lem}\label{le-1}  Let $p_{0}\in [1,2)$.
Suppose that $T$ is a sublinear operator of strong type $(2,2)$ and
$\{A_{r}\}_{r>0}$ is a family of linear operators acting on
$L^{2}(\mathbb{R}^N)$. If for $j\geq2$
\begin{equation} \label{cond25}
\begin{split}\Big{(}\frac{1}{|2^{j+1}B|}\int_{S_{j}(B)}|T(I-A_{r(B)})
f(x)|^{2}dx\Big{)}^{\frac{1}{2}}\leq
g(j)\Big{(}\frac{1}{|B|}\int_{B}|f(x)|^{p_{0}}dx\Big{)}^{\frac{1}{p_{0}}},
\end{split}
\end{equation}
and for $j\geq1$
\begin{equation} \label{cond26}
\begin{split}\Big{(}\frac{1}{|2^{j+1}B|}\int_{S_{j}(B)}|A_{r(B)}f(x)|^{2}dx\Big{)}^{\frac{1}{2}}\leq
g(j)\Big{(}\frac{1}{|B|}\int_{B}|f(x)|^{p_{0}}dx\Big{)}^{\frac{1}{p_{0}}},
\end{split}
\end{equation}
for all ball B with $r(B)$ the radius of B and all $f$ supported in
B. If $\sum_{j}g(j)2^{jN}<\infty$, then T is of weak type
$(p_{0},p_{0})$, with the bound depending only on the strong type
$(2,2)$ bound of T, $p_{0}$ and the sum $\sum_{j}g(j)2^{jN}$. Hence,
by interpolation T also is bounded on $L^{p}(\mathbb{R}^N)$ for
$p_{0}<p<2$.
\end{lem}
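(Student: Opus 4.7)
The plan is to establish the weak $(p_0,p_0)$ bound via a Calder\'on--Zygmund type argument, and then invoke Marcinkiewicz interpolation with the hypothesized strong $(2,2)$ estimate to obtain the $L^p$ boundedness on the range $p_0<p<2$.

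Fix $f\in L^{p_0}(\mathbb{R}^N)\cap L^2(\mathbb{R}^N)$ (density handles the rest) and a level $\alpha>0$. I would apply the Calder\'on--Zygmund decomposition to $|f|^{p_0}$ at height $\alpha^{p_0}$ to obtain $f=g+b$ with $b=\sum_i b_i$ supported in disjoint cubes $Q_i$ satisfying $\|g\|_\infty\lesssim\alpha$, $\|g\|_{p_0}^{p_0}\lesssim\|f\|_{p_0}^{p_0}$, $\tfrac{1}{|Q_i|}\int_{Q_i}|b_i|^{p_0}\sim\alpha^{p_0}$, and $\sum_i|Q_i|\lesssim\alpha^{-p_0}\|f\|_{p_0}^{p_0}$. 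Splitting $\{|Tf|>\alpha\}$ accordingly and controlling the good part by $|\{|Tg|>\alpha/2\}|\lesssim\alpha^{-2}\|g\|_2^2\lesssim\alpha^{-p_0}\|f\|_{p_0}^{p_0}$ (using $\|g\|_2^2\leq\|g\|_\infty^{2-p_0}\|g\|_{p_0}^{p_0}$ together with the $L^2$ bound on $T$) reduces everything to estimating $Tb$ off the exceptional set $E=\bigcup_i 4Q_i$, whose measure is already $\lesssim\alpha^{-p_0}\|f\|_{p_0}^{p_0}$.

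With $r_i=r(Q_i)$, I would decompose $Tb=\sum_i T(I-A_{r_i})b_i+T\bigl(\sum_i A_{r_i}b_i\bigr)$. For the first piece, Cauchy--Schwarz applied on each annulus $S_j(Q_i)$ with $j\geq2$, together with the hypothesis \eqref{cond25}, gives
\begin{equation*}
\int_{S_j(Q_i)}|T(I-A_{r_i})b_i|\,dx\leq|S_j(Q_i)|^{1/2}\,\|T(I-A_{r_i})b_i\|_{L^2(S_j(Q_i))}\lesssim g(j)\,2^{jN}\,|Q_i|\,\alpha.
\end{equation*}
Summing over $j\geq2$ (finite by $\sum_j g(j)2^{jN}<\infty$) and over $i$ yields an $L^1(E^c)$-bound of order $\alpha\sum_i|Q_i|\lesssim\alpha^{1-p_0}\|f\|_{p_0}^{p_0}$, and Chebyshev then produces the desired $\alpha^{-p_0}\|f\|_{p_0}^{p_0}$ measure estimate.

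For the second piece, I would use the strong $(2,2)$ hypothesis on $T$ to reduce matters to bounding $\|h\|_2$, where $h:=\sum_i A_{r_i}b_i$. By duality, tested against $\varphi$ with $\|\varphi\|_2=1$,
\begin{equation*}
|\langle h,\varphi\rangle|\leq\sum_{j\geq1}\sum_i\|A_{r_i}b_i\|_{L^2(S_j(Q_i))}\,\|\varphi\|_{L^2(S_j(Q_i))},
\end{equation*}
where hypothesis \eqref{cond26} yields $\|A_{r_i}b_i\|_{L^2(S_j(Q_i))}\lesssim g(j)\,2^{jN/2}|Q_i|^{1/2}\alpha$, and Cauchy--Schwarz in $i$ combined with the bounded-overlap bound $\sum_i\mathbf{1}_{2^{j+1}Q_i}(x)\lesssim 2^{jN}$ (available because the $Q_i$ are disjoint) gives $\sum_i\|\varphi\|_{L^2(S_j(Q_i))}^2\lesssim 2^{jN}\|\varphi\|_2^2$. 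These two estimates combine, after summation in $j$ against the finite quantity $\sum_j g(j)2^{jN}$, to produce $\|h\|_2^2\lesssim\alpha^{2-p_0}\|f\|_{p_0}^{p_0}$. The main technical obstacle is precisely this last step: one must carry out the double summation in $i$ and $j$ without any loss beyond $\sum_j g(j)2^{jN}$, and it is the disjointness of the Calder\'on--Zygmund cubes, exploited through the bounded-overlap estimate and Cauchy--Schwarz arranged as above, that allows the $2^{jN}$ arising from the annular volumes to be absorbed by the decay $g(j)$. A final Chebyshev on this piece, combined with the earlier bounds, yields the weak $(p_0,p_0)$ inequality, and Marcinkiewicz interpolation with the strong $(2,2)$ hypothesis completes the argument.
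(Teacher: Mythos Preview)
The paper does not give its own proof of this lemma; it simply records that the result is due to Blunck--Kunstmann \cite{B-K03} and refers the reader to Auscher's memoir \cite{A2007}. Your overall architecture---Calder\'on--Zygmund decomposition at level $\alpha$, control of the good part via the $(2,2)$ bound, and the splitting $Tb=\sum_iT(I-A_{r_i})b_i+T\bigl(\sum_iA_{r_i}b_i\bigr)$ with the first piece handled by \eqref{cond25} on annuli---matches the argument in those references and is correct.

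The gap is in your treatment of $h=\sum_iA_{r_i}b_i$. The claimed bounded-overlap estimate $\sum_i\mathbf{1}_{2^{j+1}Q_i}(x)\lesssim 2^{jN}$ is \emph{false}: disjointness of the $Q_i$ alone does not prevent their dilates from piling up. For instance, in one dimension one can arrange (by taking $f$ supported on alternate dyadic intervals $[2^{-k},2^{-k+1}]$, $k$ odd, with $|f|^{p_0}$ just above $\alpha^{p_0}$ there) that the CZ intervals are exactly those $I_k$, and every $4I_k$ contains the origin, so the overlap at $0$ is $\sim K$, not $O(1)$. The standard fix, and what is done in \cite{A2007}, is to replace this step by the pointwise bound $\|\varphi\|_{L^2(2^{j+1}Q_i)}\le |2^{j+1}Q_i|^{1/2}\inf_{y\in Q_i}\bigl(\mathcal{M}(|\varphi|^2)(y)\bigr)^{1/2}$; inserting this and using the disjointness of the $Q_i$ converts the $i$-sum into $\int_{\cup_iQ_i}(\mathcal{M}|\varphi|^2)^{1/2}$, which Kolmogorov's inequality for the weak-$(1,1)$ maximal operator controls by $|\cup_iQ_i|^{1/2}\|\varphi\|_2$. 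After summing in $j$ against $\sum_jg(j)2^{jN}$ this gives $\|h\|_2\lesssim\alpha\,|\cup_iQ_i|^{1/2}$, and the remainder of your argument then goes through unchanged.
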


\begin{lem}\label{le006}Let $p_{0}\in [2,\infty)$. Suppose that $T$ is a sublinear operator
acting on $L^{2}(\mathbb{R}^n)$ and  $\{A_{r}\}_{r>0}$ is a family
of linear operators acting on $L^{2}(\mathbb{R}^n)$. Also assume
that
\begin{equation} \label{cond27}
\begin{split}\Big{(}\frac{1}{|B|}\int_{B}|T(I-A_{r(B)})f(x)|^{2}dx\Big{)}^{\frac{1}{2}}\leq
C(\mathcal {M}(|f|^{2}))^{\frac{1}{2}}(y)
\end{split}
\end{equation}
and
\begin{equation} \label{cond28}
\begin{split}\Big{(}\frac{1}{|B|}\int_{B}|TA_{r(B)}f(x)|^{p_{0}}dx\Big{)}^{\frac{1}{p_{0}}}\leq
C(\mathcal {M}(|Tf|^{2}))^{\frac{1}{2}}(y)
\end{split}
\end{equation}
for all $f\in L^{2}$, all ball B and all $y\in B$ where $r(B)$ is
the radius of B. If $2<p<p_{0}$ and $Tf\in L^{p}$ as $f\in L^{p}$,
then T is strong type $(p,p)$. That is, for all $f\in
L^{2}(\mathbb{R}^n)\cap L^{p}(\mathbb{R}^n)$,
$$\|Tf\|_{L^{p}(\mathbb{R}^n)}\leq c\|f\|_{L^{p}(\mathbb{R}^n)},$$ where $c$ depends
only on n, p, and $p_{0}$ and C.
\end{lem}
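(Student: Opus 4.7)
The plan is to establish the result via a good-$\lambda$ inequality built on a Calder\'on--Zygmund decomposition performed at the $L^2$ level, which is tailored to the fact that hypotheses \eqref{cond27} and \eqref{cond28} only give averaged control in $L^2$ and $L^{p_0}$, respectively. First I would fix $f \in L^2 \cap L^p$ and, exploiting the standing assumption that $Tf \in L^p$, reduce matters to establishing the distributional bound
$$|\{x : |Tf(x)| > \lambda\}| \leq C \lambda^{-p} \|f\|_{L^p}^p.$$
The natural auxiliary quantities are the Hardy--Littlewood maximal functions $\mathcal{M}(|f|^2)^{1/2}$ and $\mathcal{M}(|Tf|^2)^{1/2}$, since these are precisely what appears on the right-hand sides of the two hypotheses.

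Second, I would pick a threshold $\lambda$ and decompose the level set $E(\lambda) = \{x : \mathcal{M}(|Tf|^2)^{1/2}(x) > \lambda\}$ into maximal dyadic Whitney cubes $\{Q_i\}$. On each $Q_i$ I would split
$$Tf = T(I - A_{r(Q_i^*)}) f + T A_{r(Q_i^*)} f,$$
where $Q_i^*$ is a fixed dilate of $Q_i$. By Chebyshev at exponent $2$ together with \eqref{cond27}, the measure of $\{x \in Q_i : |T(I - A_{r(Q_i^*)}) f(x)| > K\lambda\}$ is controlled by $C K^{-2} \mathcal{M}(|f|^2)(y_i) |Q_i| \lambda^{-2}$ for a representative $y_i \in Q_i$; restricted to cubes intersecting the good set $\{\mathcal{M}(|f|^2)^{1/2} \leq \gamma \lambda\}$, this becomes at most $C(\gamma/K)^2 |Q_i|$. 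By Chebyshev at exponent $p_0$ together with \eqref{cond28}, the contribution of the second piece is bounded by $C K^{-p_0} |Q_i|$, where the decisive gain over $L^2$ comes precisely from having $p_0 > 2$.

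Combining the two bounds produces a good-$\lambda$ inequality of the shape
$$\bigl|\{|Tf| > 2K\lambda,\ \mathcal{M}(|f|^2)^{1/2} \leq \gamma\lambda\}\bigr| \leq C\bigl((\gamma/K)^2 + K^{-p_0}\bigr)\bigl|\{\mathcal{M}(|Tf|^2)^{1/2} > \lambda\}\bigr|.$$
Multiplying by $\lambda^{p-1}$ and integrating in $\lambda$, one recovers $\|Tf\|_{L^p} \leq C\|\mathcal{M}(|f|^2)^{1/2}\|_{L^p} \leq C\|f\|_{L^p}$, provided $K$ is chosen first large enough and $\gamma$ then small enough so that the factor $(\gamma/K)^2 + K^{-p_0}$ can be absorbed; the window $2 < p < p_0$ is exactly where this absorption argument closes.

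The main technical obstacle I expect is ensuring that the $L^p$ norm on the left side of the absorption step is \emph{a priori} finite, which is the precise reason the statement carries the hypothesis ``$Tf \in L^p$ whenever $f \in L^p$''. In practice one either approximates $T$ by bounded truncations $T_N$ for which the bound propagates uniformly in $N$, or restricts $f$ to a dense subclass such as $L^2 \cap L^\infty$ with compact support, and then passes to the limit after establishing a quantitative bound independent of the truncation.
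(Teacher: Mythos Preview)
Your good-$\lambda$ argument via a Whitney decomposition of the level set $\{\mathcal{M}(|Tf|^2)^{1/2}>\lambda\}$ is correct and is exactly the approach of Auscher--Coulhon--Duong--Hofmann \cite[Theorem~2.1]{A-C-D-H} (and its exposition in \cite{A2007}), which is precisely what the paper cites in lieu of a proof. The one point worth making explicit is that the control $\mathcal{M}(|Tf|^2)^{1/2}(y)\lesssim\lambda$ needed to close the $TA_{r}f$ estimate comes from the Whitney property---a bounded dilate of each $Q_i$ meets the complement of the level set---rather than from a point inside $Q_i$ itself.
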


Lemmas \ref{le-1} and \ref{le006} are essentially due to \cite[Theorem 1.1, p.~920]{B-K03}
and \cite[Theorem 2.1, p.~923]{A-C-D-H}, respectively. See \cite{A2007}
for the proofs and nice comments on them.

\vskip 0.3cm

\subsection{The $L^{q}$ boundedness of $\nabla^{m}L^{-1/2}$ for $q\leq2$}

We consider the $L^{q}$ boundedness of $\nabla^{m}L^{-1/2}$ for $q<2$ in this subsection. First of all, let us prove the following proposition which  connects  the Riesz transform $\nabla^{m}L^{-1/2}$ and $e^{-tL}$.
\begin{pro}\label{prop-2}
Let $L=L_{0}+V\in \mathscr{A}_{m}$ where $L_{0}$ is a homogeneous elliptic operator of order 2m in divergence form defined by (\ref{divergence}). Then the following statements hold.

{\rm (i)}\ If $e^{-tL}$ satisfies $L^{p}-L^{2}$ off-diagonal estimates for some $p\in [1,2]$, then the Riesz transform $\nabla^{m}L^{-1/2}$ is of weak type $(p,p)$ and  bounded on $L^{q}$ for all $q\in (p,2]$.

{\rm (ii)}\ If $\nabla^{m}L^{-1/2}$ is  bounded on
$L^{p}$ for some $p\in (1,2)$, then $e^{-tL}$ satisfies  $L^{p}$-$L^{2}$ estimates.
\end{pro}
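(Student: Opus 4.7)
My plan splits the two parts. The $L^{2}$ boundedness of $T=\nabla^{m}L^{-1/2}$ is in hand from \eqref{L2bound}, so for (i) I will extrapolate from $L^{2}$ downwards through the Blunck--Kunstmann criterion (Lemma \ref{le-1}); for (ii) I will combine Riesz--Thorin interpolation with iterated Sobolev embedding along the Sobolev chain.

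For (i), I take $p_{0}=p$ in Lemma \ref{le-1} and the approximating family $A_{r}=I-(I-e^{-r^{2m}L})^{M}$ with an integer $M>\tfrac{N}{2m}(\tfrac{1}{p}-\tfrac{1}{2})$. Expanding $A_{r(B)}f=\sum_{k=1}^{M}\binom{M}{k}(-1)^{k+1}e^{-kr(B)^{2m}L}f$, each term enjoys the hypothesized $L^{p}$-$L^{2}$ off-diagonal estimate between $B$ and $S_{j}(B)$ at scale $t^{1/(2m)}\sim r(B)$, producing decay of type $g(j)\sim e^{-c\cdot 4^{jm/(2m-1)}}$ summable against $2^{jN}$; this delivers \eqref{cond26}. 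For \eqref{cond25} I substitute \eqref{Riesz} to get
\[
T(I-A_{r(B)})f=c_{m}\int_{0}^{\infty}\sqrt{t}\,\nabla^{m}e^{-tL}(I-e^{-r(B)^{2m}L})^{M}f\,\frac{dt}{t},
\]
and split the integral at $t=r(B)^{2m}$. On $0<t<r(B)^{2m}$ I expand the power binomially, write each summand as a composition $(\sqrt{t}\nabla^{m}e^{-\tau L/2})\circ(e^{-\tau L/2})$ with $\tau=t+kr(B)^{2m}$, and combine the $L^{2}$ off-diagonal bound for the first factor (Theorem \ref{pro20}) with the hypothesized $L^{p}$-$L^{2}$ off-diagonal bound for the second via Lemma \ref{le25}. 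On $t\geq r(B)^{2m}$ I use $(I-e^{-r(B)^{2m}L})^{M}=(r(B)^{2m}L)^{M}\varphi_{M}(r(B)^{2m}L)$ with $\varphi_{M}(x)=((1-e^{-x})/x)^{M}$ bounded, absorb the $L^{M}$ factor into $(tL)^{M}\sqrt{t}\nabla^{m}e^{-tL}$ (whose off-diagonal behaviour is inherited from Theorems \ref{thm1} and \ref{pro20} by splitting the semigroup), and pay the decay factor $(r(B)^{2m}/t)^{M}$, which makes the resulting $t$-integral convergent by the choice of $M$. Once both hypotheses of Lemma \ref{le-1} are verified, $T$ is weak $(p,p)$; Marcinkiewicz interpolation with \eqref{L2bound} then yields strong $(q,q)$ for $q\in(p,2]$.

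For (ii), I first invoke Riesz--Thorin between the $L^{p}$ hypothesis and \eqref{L2bound} to obtain $\nabla^{m}L^{-1/2}:L^{q}\to L^{q}$ for every $q\in[p,2]$. Applied to $e^{-tL}f$ through the factorization $\nabla^{m}e^{-tL}=(\nabla^{m}L^{-1/2})(L^{1/2}e^{-tL})$ and combined with the Sobolev inequality $\|h\|_{L^{q^{*}}}\leq C\|\nabla^{m}h\|_{L^{q}}$ (valid when $q<N/m$), this yields $\|e^{-tL}f\|_{L^{q^{*}}}\leq C\|L^{1/2}e^{-tL}f\|_{L^{q}}$. The subordination identity $L^{1/2}e^{-tL}=\tfrac{1}{\sqrt{\pi}}\int_{0}^{\infty}Le^{-(s+t)L}s^{-1/2}\,ds$, together with the uniform $L^{q}$ boundedness of $\{sLe^{-sL}\}_{s>0}$ in the relevant range of $q$, gives $\|L^{1/2}e^{-tL}\|_{L^{q}\to L^{q}}\leq Ct^{-1/2}$, hence $\|e^{-tL}\|_{L^{q}\to L^{q^{*}}}\leq Ct^{-1/2}$. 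Iterating along the Sobolev chain $p_{0}=p$, $p_{k+1}=p_{k}^{*}$ up to the first $K$ with $p_{K}\geq 2$ (so that $\tfrac{1}{p_{k}}-\tfrac{1}{p_{k+1}}=\tfrac{m}{N}$) and decomposing $e^{-tL}=e^{-tL/K}\cdots e^{-tL/K}$ accumulates the correct time-decay factor $t^{-K/2}=t^{-\frac{N}{2m}(1/p-1/2)}$, completing (ii).

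The main obstacle I anticipate is the large-$t$ piece in part (i), where the functional-calculus factor $\varphi_{M}(r(B)^{2m}L)$ must be shown to preserve $L^{2}$ off-diagonal behaviour; I plan to handle this by representing $\varphi_{M}$ as a Laplace-type integral in the heat semigroup and applying Theorem \ref{thm1} under the integral. A secondary obstacle in (ii) is securing the uniform $L^{q}$ boundedness of $\{sLe^{-sL}\}_{s>0}$ for $q$ below the range of Theorem \ref{pro22}; a bootstrap argument using the same Sobolev iteration should close this loop.
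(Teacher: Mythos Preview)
Your treatment of (i) is essentially the paper's proof. The paper uses the same Blunck--Kunstmann scheme with the same $A_{r}=I-(I-e^{-r^{2m}L})^{M}$ and disposes of \eqref{cond25} by quoting \cite[Lemma~4.4]{A2007}, which is precisely the split-and-estimate argument you outline; your anticipated obstacle with $\varphi_{M}$ is the only nontrivial step there and is handled in Auscher's proof by writing $(I-e^{-r^{2m}L})^{M}$ as an integral of $(r^{2m}L)^{M}e^{-sL}$ over $s\in[0,r^{2m}]^{M}$. One quantitative point: your threshold $M>\tfrac{N}{2m}(\tfrac1p-\tfrac12)$ is too weak for the $j$-sum in \eqref{cond25}; the decay one actually obtains is $g_{2}(j)\sim 2^{-j(N/2+2mM)}$, so $\sum_{j}g_{2}(j)2^{jN}<\infty$ forces $M>\tfrac{N}{4m}$ regardless of $p$.

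In (ii) your route diverges from the paper and carries a genuine gap. Your subordination step needs $\{sLe^{-sL}\}_{s>0}$ uniformly bounded on $L^{q}$ for each $q$ along the Sobolev chain $p=p_{0}<p_{1}<\cdots<p_{K}$, but when $N>2m$ the early exponents lie below $\tfrac{2N}{N+2m}$, outside the range supplied by Theorem~\ref{pro22}. The bootstrap you sketch does not close: any extension of the $L^{q}$-boundedness of $tLe^{-tL}$ (or, equivalently, of the semigroup) to smaller $q$ via Proposition~\ref{pro1'} already presupposes the $L^{q}$-$L^{2}$ estimate for $e^{-tL}$ that you are trying to prove, so the argument is circular. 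The paper avoids this entirely by iterating the \emph{static} operator rather than the semigroup: from $\nabla^{m}L^{-1/2}\in\mathcal{L}(L^{r})$ and Sobolev one gets $L^{-1/2}:L^{r}\to L^{r^{\ast}}$ for every $r\in[p,2]$ with no appeal to $L^{r}$-analyticity, hence $L^{-j_{0}/2}:L^{p}\to L^{r_{j_{0}}}$ with $r_{j_{0}}\in I'_{m,N}$. One then writes $e^{-tL}=(e^{-tL/2}L^{j_{0}/2})(e^{-tL/2}L^{-j_{0}/2})$, handles the first factor on $L^{2}$ by spectral calculus with bound $Ct^{-j_{0}/2}$, and the second via $L^{-j_{0}/2}:L^{p}\to L^{r_{j_{0}}}$ followed by the $L^{r_{j_{0}}}$-$L^{2}$ estimate from Theorem~\ref{pro22}. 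This lands the functional calculus in $L^{2}$, where it is free, and produces the correct power $t^{-\frac{N}{2m}(\frac1p-\frac12)}$ without any circularity.
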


\begin{proof}
We prove (i) first. Notice that $\nabla^{m}L^{-1/2}$  is bounded on $L^{2}(\mathbb{R}^{N})$, then by Lemma \ref{le-1}, it suffices
 to verify that (\ref{cond25}) and (\ref{cond26}) hold for $T=\nabla^{m}L^{-1/2}$ and $p_{0}=p$. To this end,
let $B$ be a ball and $r=r(B)$ its radius. Choose $A_{r}={\bf I}-({\bf I}-e^{-r^{2m}L})^{M}$
for $M\in \mathbb{N}$ and $M>\frac N{4m}$.

We first verify (\ref{cond26}).
Let $f$ be supported in
$B$. Notice that $A_{r}=\sum^{M}_{\ell=1}C_{M,\ell}e^{-\ell
r^{2m}L}$ and $e^{-tL}$ satisfies the $L^{p}$-$L^2$ off-diagonal estimates, then for any $j\ge 1$, we obtain that
\begin{eqnarray}\label{cond232'}
\Big{(}\frac{1}{|2^{j+1}B|}\int_{S_{j}(B)}|A_rf(x)|^{2}dx\Big{)}^{\frac{1}{2}}
&\le&\sum^{M}_{\ell=1}C_{M,\ell}\Big{(}\frac{1}{|2^{j+1}B|}\int_{S_{j}(B)}|e^{-\ell r^{2m}L}f(x)|^{2}dx\Big{)}^{\frac{1}{2}}\nonumber\\
&\leq&\sum^{M}_{\ell=1}C_{M,\ell}2^{-\frac{jN}{2}}|B|^{-\frac{1}{2}}\|e^{-\ell r^{2m}L}f\|_{L^{2}(S_{j}(B))}\nonumber\\
&\leq&C_M2^{-\frac{jN}{2}}|B|^{-\frac{1}{2}}r^{-N(\frac{1}{p}-\frac{1}{2})}
e^{-c\big(\frac{d(B,S_{j}(B))^{2m}}{r^{2m}}\big)^{\frac{1}{2m-1}}}\|f\|_{L^{p}(B)}\nonumber\\
&\leq&g_1(j)\Big{(}\frac{1}{|B|}\int_{B}|f(x)|^{p}dx\Big{)}^{\frac{1}{p}},\nonumber
\end{eqnarray}
where  $g_1(j)=C_M2^{-\frac{jN}{2}}e^{-c_M2^{\frac{2jm}{2m-1}}}$ as $j\ge 2$ and $g_1(1)=C_M2^{-{N\over 2}}$.
Thus for $j\geq1$,
\begin{eqnarray}\label{eq54}
\Big{(}\frac{1}{|2^{j+1}B|}\int_{S_{j}(B)}|A_{r(B)}f(x)|^{2}dx\Big{)}^{\frac{1}{2}}\leq
g_1(j)\Big{(}\frac{1}{|B|}\int_{B}|f(x)|^{p}dx\Big{)}^{\frac{1}{p}}
\end{eqnarray}
holds with $\sum_{j=1}^\infty g_1(j)2^{jN}<\infty$.

It remains to establish  (\ref{cond25}). Notice that by the same procedure as in the proof of Auscher \cite[Lemma 4.4]{A2007},
\begin{equation}\label{equation-1}
\begin{split}\|\nabla^{m}L^{-1/2}({\bf I}-e^{-r^{2m}L})^{M}f\|_{L^{2}(S_{j}(B))}
\leq Cr^{(\frac{N}{2}-\frac{N}{p})}2^{-2mMj}\|f\|_{L^{p}(B)}
\end{split}
\end{equation}
holds for given $p\in[1,2]$ as in assumptions. Thus, for $j\geq2$ we get that
\begin{eqnarray}
&&\Big{(}\frac{1}{|2^{j+1}B|}\int_{S_{j}(B)}|\nabla^{m}L^{-1/2}(\textbf{I}-A_{r(B)})f(x)|^{2}dx\Big{)}^{\frac{1}{2}}\nonumber\\
&\leq&C2^{-\frac{jN}{2}}|B|^{-\frac{1}{2}}\|\nabla^{m}L^{-1/2}(\textbf{I}-e^{-r^{2m}L})^{M}f\|_{L^{2}(S_{j}(B))}\nonumber\\
&\leq&C2^{-\frac{jN}{2}}|B|^{-\frac{1}{2}}r^{-N(\frac{1}{p}-\frac{1}{2})}2^{-2mMj}\|f\|_{L^{p}(B)}\nonumber\\
&\leq&g_2(j)\Big{(}\frac{1}{|B|}\int_{B}|f(x)|^{p}dx\Big{)}^{\frac{1}{p}},\nonumber
\end{eqnarray}
where $g_2(j)=C2^{-j(\frac{N}{2}+2mM)}$. Then $\sum_{j=2}^\infty g_2(j)2^{jN}<\infty$ when $M\in
\mathbb{N}$ and $M>\frac{N}{4m}$, which means that
(\ref{cond25}) holds for $T=\nabla^{m}L^{-1/2}$ and $p_{0}=p$. We hence finish the proof of (i).

We now turn to prove the statement (ii). Let $L\in\mathscr{A}_{m}$, it follows from Theorem \ref{pro22} and a duality argument that $e^{-tL}$ satisfies
$L^{p}-L^{2}$ estimates for all $p\in I'_{m,N}$ (see (\ref{index}) its definition).
Thus, it suffices to prove (ii) for $p<\frac{2N}{N+2m}$ when $N>2m$.

Notice that $\nabla^{m}L^{-1/2}$ is bounded on $L^{r}(\mathbb{R}^{N})$ for all $r\in [p,2]$, then
it follows from the Sobolev embedding theorem that
\begin{eqnarray}
L^{-1/2}:\ L^{r}(\mathbb{R}^{N})\rightarrow\ L^{r^{\ast}}(\mathbb{R}^{N}),\ \ \ \ \ r\in [p,2].\nonumber
\end{eqnarray}
Now let $r_{0}=p$ and $r_{j}=r^{\ast}_{j-1}=\frac{Nr_{0}}{N-mjr_{0}}$ for $j\in \mathbb{N}$  with $j< \frac{N}{r_{0}m}$, it is easy to see that
\begin{eqnarray}\label{eq55}
L^{-j/2}:\ L^{p}(\mathbb{R}^{N})\rightarrow\ L^{r_{j}}(\mathbb{R}^{N}).
\end{eqnarray}
By choosing  $j_{0}\geq 1$ such that $\frac{2N}{N+2m}<r_{j_{0}}<2$, it follows from (\ref{eq55}) that $L^{-j_{0}/2}$ satisfies $L^{p}-L^{r_{j_{0}}}$ estimate.
Write
\begin{eqnarray}
e^{-tL}=(e^{-\frac{t}{2}L}L^{j_{0}/2})e^{-\frac{t}{2}L}L^{-j_{0}/2},\nonumber
\end{eqnarray}
we have
 $$\|e^{-\frac{t}{2}L}L^{-j_{0}/2}\|_{L^{p}-L^{2}}\leq \|e^{-\frac{t}{2}L}\|_{L^{r_{j_{0}}}-L^{2}}\|L^{-j_{0}/2}\|_{L^{p}-L^{r_{j_{0}}}}\leq Ct^{-\frac{N}{2m}(1/r_{j_{0}}-1/2)}.$$
On the other hand, by the bounded holomorphic calculus of $L$
on $L^{2}(\mathbb{R}^{N})$,  $e^{-\frac{t}{2}L}L^{j_{0}/2}$ is $L^{2}-L^{2}$ bounded by $Ct^{-j_{0}/2}$. Therefore
\begin{eqnarray}
\|e^{-tL}\|_{L^{p}-L^{2}}&\leq& \|e^{-\frac{t}{2}L}L^{j_{0}/2}\|_{L^{2}-L^{2}}\|e^{-\frac{t}{2}L}L^{-j_{0}/2}\|_{L^{p}-L^{2}}\leq Ct^{-\frac{N}{2m}(1/p-1/2)}.\nonumber
\end{eqnarray}
We finish the proof.
\end{proof}

For $L\in \mathscr{A}_{m}$, by using Proposition \ref{prop-2} and the off-diagonal estimates for $e^{-tL}$, we now show the $L^{q}$ boundedness of $\nabla^{m}L^{-1/2}$ for $q\leq2$.
\vskip0.3cm
{\bf \emph{The proof of Theorem \ref{thm4}}:}
Let $L$ be defined as in Theorem \ref{thm4}. Then it follows from Theorem \ref{pro22}  that $e^{-tL}$ satisfies $L^{q}-L^{2}$ off-diagonal estimates for all $q\in I'_{m,N}$. Thus, we can finish the proof of Theorem \ref{thm4} by using (i) of Proposition \ref{prop-2}.
\vskip0.3cm


\subsection{Sharpness of Theorem \ref{thm4}}

In this subsection, we will discuss the sharpness of the interval $(\frac{2N}{N+2m}\vee1,2]$ in Theorem \ref{thm4}. It has been pointed out in Remark \ref{rem1-2} that when $N\leq 2m$, the lower bound of $(1,2]$ in Theorem \ref{thm4} is optimal with respect to $\mathscr{A}_{m}$ and the sharpness of the upper bound  of $(1,2]$ is still unknown to us.
However, for $N>2m$, we have the following sharpness result.

\begin{thm}\label{sharp-1}
Let $m\geq2$ and $N>2m$. The interval $(\frac{2N}{N+2m},2]$ in Theorem \ref{thm4} is sharp in the following sense: for given $q<\frac{2N}{N+2m}$ or $q>2$, there exists an  operator $L\in \mathscr{A}_{m}$ such that $\nabla^{m}L^{-1/2}$ is not bounded on $L^{q}(\mathbb{R}^{N})$.
\end{thm}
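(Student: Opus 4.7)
The plan is to use the family of operators
\[
L_\gamma := (-\Delta)^m - \gamma |x|^{-2m},\qquad 0<\gamma<1/\kappa(m,N),
\]
as the source of counterexamples; by Remark \ref{rem1-1}(ii) each $L_\gamma\in\mathscr{A}_m$. Their key feature is exact scale-invariance, $\mathcal{U}_\delta^{-1}L_\gamma\mathcal{U}_\delta=\delta^{2m}L_\gamma$, which makes the integral kernel of $\nabla^m L_\gamma^{-1/2}$ homogeneous of degree $-N$ and reduces boundedness questions to Mellin-transform/spherical-harmonic analysis. Decomposing into spherical harmonics, the radial sector of $L_\gamma u=0$ is an Euler-type ODE whose homogeneous solutions are $r^{-\alpha}$ with $P_m(\alpha)=\gamma$, where $P_m$ is the polynomial determined by $(-\Delta)^m r^{-\alpha}=P_m(\alpha)r^{-\alpha-2m}$. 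One obtains two real exponents $\alpha_-(\gamma)<(N-2m)/2<\alpha_+(\gamma)$ satisfying $\alpha_-(\gamma)+\alpha_+(\gamma)=N-2m$, both of which collapse to $(N-2m)/2$ as $\gamma\uparrow 1/\kappa(m,N)$.

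For the lower endpoint $q<\frac{2N}{N+2m}$ I would argue by contrapositive to Proposition \ref{prop-2}(ii): if $\nabla^m L_\gamma^{-1/2}$ were bounded on $L^q$, then $e^{-tL_\gamma}$ would satisfy the $L^q$--$L^2$ estimate. Using the explicit spectral resolution of $L_\gamma$ in each spherical-harmonic sector (in terms of Bessel-type functions whose indices are shifted by $\alpha_-(\gamma)$), one shows that $\|e^{-tL_\gamma}\|_{L^q\to L^2}\lesssim t^{-\frac{N}{2m}(\frac{1}{q}-\frac{1}{2})}$ can hold only when $q>N/(N-\alpha_-(\gamma))$; intuitively, the ground-state-like behaviour $|x|^{-\alpha_-(\gamma)}$ near the origin obstructs the extension to smaller $q$. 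Since $\alpha_-(\gamma)\uparrow(N-2m)/2$ as $\gamma\uparrow 1/\kappa(m,N)$, the threshold $N/(N-\alpha_-(\gamma))$ decreases to $\frac{2N}{N+2m}$; so for any prescribed $q<\frac{2N}{N+2m}$ a sufficiently large $\gamma<1/\kappa(m,N)$ produces an $L_\gamma\in\mathscr{A}_m$ for which $L^q$-boundedness fails.

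For the upper endpoint $q>2$ I would test the equivalent inequality $\|\nabla^m f\|_{L^q}\leq C\|L_\gamma^{1/2}f\|_{L^q}$ against radial profiles adapted to the $r^{-\alpha_+(\gamma)}$ solution. Concretely, take $f_R(x)=\chi(x/R)\,|x|^{-\alpha_+(\gamma)+2m}$ with $\chi$ a smooth cutoff; on the annulus of radius $\sim R$ the pure radial profile $|x|^{-\alpha_+(\gamma)+2m}$ is mapped by $L_\gamma$ to an explicit scalar multiple of $|x|^{-\alpha_+(\gamma)}$ (so, up to cutoff error, $L_\gamma^{1/2}f_R$ is effectively the square root of such a profile), whereas $\nabla^m f_R(x)\sim|x|^{-\alpha_+(\gamma)+m}$. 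Computing both sides and letting $R\to\infty$, the inequality must fail once $q>N/(\alpha_+(\gamma)+m)$. Because $\alpha_+(\gamma)+m\downarrow N/2$ as $\gamma\uparrow 1/\kappa(m,N)$, every $q>2$ is ruled out by a suitable $\gamma$, giving the desired counterexample.

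The main obstacle is to carry out rigorously the spectral/Mellin analysis of $L_\gamma$ in each spherical-harmonic sector and extract the sharp $L^q$--$L^2$ bounds for $e^{-tL_\gamma}$, together with controlling the cutoff errors in the test-function construction for $q>2$. The second-order case $m=1$ is due to Liskevich--Sobol, Milman--Semenov and Assaad; for $m\geq 2$ the radial ODE remains exactly tractable via the Mellin transform thanks to the scale-matching between $(-\Delta)^m$ and $|x|^{-2m}$, but one must track the roots of $P_m(\alpha)=\gamma$ carefully and verify that the relevant exponent in non-radial sectors is dominated by $\alpha_-(\gamma)$ (resp.\ $\alpha_+(\gamma)$), so that the radial sector indeed governs the endpoint behaviour.
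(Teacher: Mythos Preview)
Your approach is genuinely different from the paper's and, if it could be made rigorous, would actually prove something stronger: sharpness within the subclass $\mathscr{B}_m$ of operators with $L_0=(-\Delta)^m$.  The paper instead takes a much shorter route that avoids any new spectral analysis.  For both endpoints it invokes a result of Davies (\cite{D97-1}, recorded here as Lemma~\ref{lemma-21}): there exists a homogeneous elliptic operator $L_0$ of order $2m$ in divergence form with bounded (but nonsmooth) coefficients such that $e^{-tL_0}$ fails to extend to a uniformly bounded operator on $L^p(\mathbb{R}^N)$ for any $p\notin[\tfrac{2N}{N+2m},\tfrac{2N}{N-2m}]$.  One then takes $L=L_0$ (i.e.\ $V=0$) and uses the equivalences already established in the paper: Proposition~\ref{prop-2}(ii) together with Proposition~\ref{pro1'} gives the lower-endpoint failure, and Proposition~\ref{pro31}(i), Proposition~\ref{le52} and Lemma~\ref{le005} give the upper-endpoint failure by contradiction.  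No Mellin analysis, no test functions, no tracking of roots of $P_m(\alpha)=\gamma$.

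As for your plan itself: the outline is coherent but several steps are genuinely hard and not just bookkeeping.  First, a small slip: your threshold $N/(N-\alpha_-(\gamma))$ \emph{increases} to $\tfrac{2N}{N+2m}$ as $\alpha_-(\gamma)\uparrow\tfrac{N-2m}{2}$, not decreases; your conclusion is still correct, but the monotonicity is stated backwards.  More seriously, for the $q>2$ part you want to test $\|\nabla^m f\|_{L^q}\le C\|L_\gamma^{1/2}f\|_{L^q}$ against cutoffs of $|x|^{-\alpha_+(\gamma)+2m}$, but $L_\gamma^{1/2}$ is nonlocal, so the cutoff errors are not localised near $|x|\sim R$ and cannot be controlled by a routine computation; you would need either a pointwise kernel bound for $L_\gamma^{1/2}$ or an explicit Mellin-side identity, neither of which is available off the shelf for $m\ge 2$.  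Similarly, the assertion that the non-radial spherical-harmonic sectors are dominated by the radial one requires verifying a monotonicity of the roots of the shifted polynomials $P_m(\alpha)+\text{(angular eigenvalue)}=\gamma$, which is plausible but not obvious for general $m$.  In short, your programme could perhaps be carried out, but it is substantially more work than what the statement requires, and the paper bypasses all of it by importing Davies' counterexample.
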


Before proving Theorem \ref{sharp-1}, we first see the following lemma which essentially belongs to Davies \cite{D97-1}.

\begin{lem}\label{lemma-21}
Let $m\geq2$ and $N>2m$. For given $q<\frac{2N}{N+2m}$ or $q>\frac{2N}{N-2m}$, there exist a self-adjoint elliptic operator $L_{0}$ of order $2m$ in divergence form with bounded coefficients satisfying (\ref{strong}) such that $e^{-tL_{0}}$ cannot be extended from $L^{2}(\mathbb{R}^{N})\cap L^{q}(\mathbb{R}^{N})$ to a uniformly bounded operator on $L^q(\mathbb{R}^{N})$ for any $t>0$.
\end{lem}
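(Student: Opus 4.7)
The plan is to first reduce to one endpoint by duality, and then apply the constant-coefficient construction of Davies \cite{D97-1}.

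Since $L_{0}$ is self-adjoint on $L^{2}$, one has $\|e^{-tL_{0}}\|_{L^{q}\to L^{q}}=\|e^{-tL_{0}}\|_{L^{q'}\to L^{q'}}$ for every $q\in(1,\infty)$ and every $t>0$, and $q<\tfrac{2N}{N+2m}$ is equivalent to $q'>\tfrac{2N}{N-2m}$. Hence it suffices to exhibit, for each prescribed $q>\tfrac{2N}{N-2m}$, a self-adjoint divergence-form operator of order $2m$ with bounded coefficients satisfying (\ref{strong}) whose heat semigroup is not uniformly bounded on $L^{q}(\mathbb{R}^{N})$.

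The natural candidate is $L_{0}=P(D)$ for a real homogeneous polynomial $P(\xi)=\sum_{|\alpha|=|\beta|=m}a_{\alpha\beta}\xi^{\alpha+\beta}$ of degree $2m$ that is strictly positive on $\mathbb{R}^{N}\setminus\{0\}$. With the constant symmetric coefficients $a_{\alpha\beta}$, Plancherel gives $\langle L_{0}f,f\rangle=\int P(\xi)|\widehat f(\xi)|^{2}\,d\xi\geq c\|\nabla^{m}f\|_{L^{2}}^{2}$, so $L_{0}\in\mathscr{A}_{m}(\rho,\Lambda,0)$ for suitable $\rho,\Lambda$, and the heat semigroup acts by convolution with the self-similar kernel $K_{t}(z)=t^{-N/(2m)}F(zt^{-1/(2m)})$ where $F=\mathcal{F}^{-1}(e^{-P})$. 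By this scaling, uniform $L^{q}$-boundedness of $\{e^{-tL_{0}}\}_{t>0}$ is equivalent to $L^{q}\to L^{q}$ boundedness of convolution with $F$; failure of the latter can be detected by producing a sequence of dilated test functions that exhibit slow pointwise decay of $F$.

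The core input from Davies is, for each $q>\tfrac{2N}{N-2m}$, the construction of such a $P=P_{q}$ whose unit level hypersurface $\{P=1\}$ has a point of vanishing Gaussian curvature of sufficiently high order. A stationary-phase analysis at the associated critical direction then produces the asymptotic $|F(z)|\gtrsim |z|^{-\sigma}$ along a cone, with the decay exponent $\sigma$ controlled by the order of degeneracy; for the chosen $P_{q}$ one has $\sigma q'\leq N$, so $F\notin L^{q'}$, and the anticipated test sequence witnesses that convolution with $F$ is unbounded on $L^{q}$. The main obstacle is precisely this geometric-analytic input: producing, for $N>2m$ with $m\geq 2$ and each admissible $q$, a real homogeneous $P$ of degree $2m$ whose level surface carries Gauss-map degeneracy of exactly the right order and whose stationary-phase asymptotics meet the critical exponent $\tfrac{2N}{N-2m}$. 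Granted Davies' construction, the duality and self-similar reductions above close the argument.
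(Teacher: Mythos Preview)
Your duality reduction is correct, but the constant-coefficient route cannot work. If $P(\xi)=\sum_{|\alpha|=|\beta|=m}a_{\alpha\beta}\xi^{\alpha+\beta}$ is real and strictly positive on $\mathbb{R}^{N}\setminus\{0\}$ (which self-adjointness and the G{\aa}rding inequality force), then $e^{-P(\xi)}$ is a Schwartz function of $\xi$, and so its inverse Fourier transform $F$ is Schwartz as well. In particular $F\in L^{1}(\mathbb{R}^{N})$, and convolution with $F$ is bounded on $L^{q}$ for every $1\le q\le\infty$; by your own scaling observation the same holds uniformly in $t$. There is no stationary-phase mechanism here: the integrand $e^{-P(\xi)}$ has no oscillation, so curvature of the level surface $\{P=1\}$ is irrelevant to the decay of $F$. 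The lower bound $|F(z)|\gtrsim|z|^{-\sigma}$ you invoke is simply false for such $F$.

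Davies' construction in \cite{D97-1}, which the paper cites, is not constant-coefficient. His operator $A$ (Theorem~5 there) has bounded coefficients that are smooth on $\mathbb{R}^{N}\setminus\{0\}$ and homogeneous of degree zero in $x$, i.e.\ $a_{\alpha,\beta}(sx)=a_{\alpha,\beta}(x)$ for all $s>0$. This scaling invariance gives the dilation identity $\mathcal{U}_{\delta}^{-1}A\,\mathcal{U}_{\delta}=\delta^{-2m}A$, which is what replaces your Fourier self-similarity; the failure of $L^{q}$-boundedness for $q\notin[\tfrac{2N}{N+2m},\tfrac{2N}{N-2m}]$ is then extracted from spectral information about $A$ (Theorem~10 of \cite{D97-1}), not from kernel asymptotics. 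To repair your argument you must abandon $P(D)$ and invoke this variable-coefficient example.
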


\begin{proof}
In fact, let $L_{0}$ be the operator $A$ defined in Davies \cite[Theorem 5]{D97-1}, it follows that $L_{0}$ is an uniformly non-negative self-adjoint uniformly operator of order $2m$ with coefficients $a_{\alpha,\beta}$. Moreover, $a_{\alpha,\beta}$ are bounded smooth functions of $x$ on $\mathbb{R}^{N}\backslash\{0\}$ and $a_{\alpha,\beta}(sx)=a_{\alpha,\beta}(x)$ for all $s>0$ and $x\in \mathbb{R}^{N}$. Then by Davies \cite[Theorem 10]{D97-1}, we have that for all $p\overline{\in} [\frac{2N}{N+2m},\frac{2N}{N-2m}]$, the $e^{-tL_{0}}$ cannot be extended from $L^{2}(\mathbb{R}^{N})\cap L^{q}(\mathbb{R}^{N})$ to a uniformly bounded operator on $L^q(\mathbb{R}^{N})$ for any $t>0$.
\end{proof}

{\bf \emph{The Proof of Theorem \ref{sharp-1}:}}
We first prove that the lower bound $\frac{2N}{N+2m}$ is sharp. Actually, by using (ii) of Proposition \ref{prop-2} and Proposition \ref{pro1'}, we only need to prove  that for $N>2m$ and given $q<\frac{2N}{N+2m}$,
there exists an operator $L\in\mathscr{A}_{m}$ such that the semigroup $e^{-tL}$ cannot be extended from $L^{2}(\mathbb{R}^{N})\cap L^{q}(\mathbb{R}^{N})$ to an uniformly bounded operator on $L^q(\mathbb{R}^{N})$.
In fact, let $L=L_{0}+V$ where $L_{0}$ be the operator defined in Lemma \ref{lemma-21} and $V=0$.
It follows from Lemma \ref{lemma-21} that for all $q<\frac{2N}{N+2m}$, the $e^{-tL}$ cannot be extended from $L^{2}(\mathbb{R}^{N})\cap L^{q}(\mathbb{R}^{N})$ to a uniformly bounded operator on $L^q(\mathbb{R}^{N})$ for any $t>0$. Hence, we finishes the proof for $q<\frac{2N}{N+2m}$.

Now we turn to prove that the upper bound $2$ is sharp. The proof would involve some results of next section.
Without loss of generality, we assume that $2<q<\frac{N}{m}$ in the whole proof. Choosing a fixed $\frac{2N}{N-2m}<p<\frac{Nq}{N-qm}$, it follows from Lemma  \ref{lemma-21} that  there exists an operator $L\in\mathscr{A}_{m}$ such that $e^{-tL}$ cannot be extended from $L^{2}(\mathbb{R}^{N})\cap L^{p}(\mathbb{R}^{N})$ to a uniformly bounded operator on $L^p(\mathbb{R}^{N})$,
hence the Riesz transform $\nabla^{m}L^{-1/2}$ is not bounded on $L^{q}(\mathbb{R}^{N})$.
Otherwise, it follows from (i) of Proposition \ref{pro31} and Proposition \ref{le52} that $\sqrt{t}\nabla^{m}e^{-tL}$ is uniformly bounded on $L^{r}$ for all $2<r<q$. Then by Lemma \ref{le005}, we can obtain that $e^{-tL}$ is uniformly bounded on $L^{r}$ for all $2\leq r<\frac{Nq}{N-qm}$, which makes contradiction since $e^{-tL}$ is not uniformly bounded on $L^p(\mathbb{R}^{N})$ for given $\frac{2N}{N-2m}<p<\frac{Nq}{N-qm}$. Hence, we finish the proof.

\section{The Riesz transform for $q>2$}

Let $L=L_{0}+V\in \mathscr{A}_{m}$ where $L_{0}$ is a homogeneous elliptic operator of order $2m$ in divergence form defined by (\ref{divergence}), the $L^q$ boundedness of $\nabla^{m}L^{-1/2}$ for $q>2$ will be investigate  in this section.

We begin  with the $L^p$-regularity of $\sqrt{t}\nabla^{m} e^{-tL}$ for $p>2$, which will help us build a bridge between the boundedness of $\nabla^{m}L^{-1/2}$ on $L^{q}$ for $q>2$ and the off-diagonal estimates of  $\sqrt{t}\nabla^{m} e^{-tL}$, see subsection \ref{sec4-1} below. On the other hand, notice that extra conditions on $L\in \mathscr{A}_{m}$ are necessary due to Shen's counterexample in \cite{Sh95} and the sharpness result Theorem \ref{sharp-1}, thus two types of conditions on both $L_{0}$ and $V$ will be introduced and discussed in subsections \ref{sec4-2} and \ref{sec4-3}, respectively.

\subsection{The $L^p$-regularity of $\sqrt{t}\nabla^{m} e^{-tL}$ for $p>2$}\label{sec4-1}

We recall in Proposition \ref{prop-2} that
the $L^q$ ($q<2$) boundedness of Riesz transform $\nabla^{m}L^{-1/2}$ is essentially equivalent to the $L^{p}-L^{2}$ estimates (off-diagonal estimates) of $e^{-tL}$. In this subsection, we will establish the equivalence between
the $L^p$ $(p>2)$ properties of $\sqrt{t}\nabla^{m}e^{-tL}$ and the $L^q$  boundedness of $\nabla^{m}L^{-1/2}$ for $q>2$. To this end, we first consider the family of operators $\{\nabla^{m}e^{-tL}\}_{t>0}$ on $L^{p}(\mathbb{R}^N)$ for $p>2$.

\begin{lem}\label{le005}
Let $N> 2m\geq4$ and  $L=L_{0}+V\in \mathscr{A}_{m}$ where $L_{0}$ is a homogeneous elliptic operator of order 2m in divergence form defined by (\ref{divergence}).
Assume that $\{\sqrt{t}\nabla^{m} e^{-tL}\}_{t>0}$ is uniformly  bounded  operators on $L^{q}(\mathbb{R}^{N})$ for some $2\leq q<\frac{N}{m}$, then the operators $\{e^{-tL}\}_{t>0}$ is bounded on $L^{p}(\mathbb{R}^{N})$ for all $2\leq p<q^{\ast}=\frac{Nq}{N-mq}$.
\end{lem}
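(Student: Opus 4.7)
The strategy is to turn the hypothesis on $\sqrt{t}\,\nabla^{m}e^{-tL}$ into an improving bound for $e^{-tL}$ via Sobolev embedding, and then to bootstrap it into the claimed uniform $L^{p}$-boundedness.

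First, since $q<N/m$, the homogeneous Sobolev embedding gives $\|g\|_{L^{q^{\ast}}}\le C\|\nabla^{m}g\|_{L^{q}}$ for $g\in W^{m,q}(\mathbb{R}^{N})$. Applying this to $g=e^{-tL}f$ and using the hypothesis yields
$$\|e^{-tL}f\|_{L^{q^{\ast}}}\le C\|\nabla^{m}e^{-tL}f\|_{L^{q}}\le C\,t^{-1/2}\|f\|_{L^{q}}.$$
Interpolating (Riesz--Thorin) this with the $L^{2}\to L^{2^{\ast}}$ estimate carrying the same $t^{-1/2}$ decay from Theorem \ref{pro22}(i), and noting that the interpolant stays on the Sobolev line $1/s^{\ast}=1/s-m/N$ precisely because both endpoints do, produces the family of improving estimates
$$\|e^{-tL}f\|_{L^{r^{\ast}}}\le C_{r}\,t^{-1/2}\|f\|_{L^{r}},\qquad r\in[2,q].$$

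For the conclusion, when $p\in[2,2^{\ast}]$ the required uniform $L^{p}$-boundedness is already provided by Theorem \ref{pro22}(iii). For $p\in(2^{\ast},q^{\ast})$ I would write $p=r^{\ast}$ with $r=Np/(N+mp)\in(2,q)$, and exploit the self-adjointness of $L$: the improving bound $L^{r}\to L^{p}$ dualises to a smoothing bound $L^{p'}\to L^{r'}$ with the same $t^{-1/2}$ decay. Composing $e^{-tL}=e^{-(t/2)L}\circ e^{-(t/2)L}$ through an intermediate Lebesgue space and absorbing the two $t^{-1/2}$ singularities against the $L^{2}$ uniform contribution by a Gagliardo--Nirenberg-type interpolation should yield $\|e^{-tL}f\|_{L^{p}}\le C_{p}\|f\|_{L^{p}}$ with $C_{p}$ independent of $t$.

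The principal difficulty is this last step. Pure Riesz--Thorin interpolation between $L^{2}\to L^{2}$ (uniform) and $L^{q}\to L^{q^{\ast}}$ (with $t^{-1/2}$ blow-up) only delivers off-diagonal bounds $L^{p_{\theta}}\to L^{r_{\theta}}$ with $p_{\theta}<r_{\theta}$, never the on-diagonal $L^{p}\to L^{p}$ estimate we need. One must therefore exploit the analyticity of $e^{-tL}$ on $L^{2}$ to control the singularity as $t\to 0^{+}$, couple the forward improving step with its dual smoothing counterpart (this is where self-adjointness of $L$ enters crucially), and glue together the small-$t$ and large-$t$ regimes by an absorption argument. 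This is the higher-order analogue of the standard passage from an ultracontractive improving bound to uniform $L^{p}$-boundedness for a self-adjoint semigroup, adapted to the present setting where positivity of the kernel is unavailable.
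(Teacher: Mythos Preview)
Your first two steps match the paper: deriving the improving estimates $\|e^{-tL}\|_{L^{r}\to L^{r^{\ast}}}\le C\,t^{-1/2}$ for every $r\in[2,q]$ via Sobolev embedding and interpolation is exactly how the argument begins.

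The gap is precisely where you locate it, and it is genuine. Your composition-through-duality idea does not close: dualising the improving map $L^{r}\to L^{r^{\ast}}=L^{p}$ yields $L^{p'}\to L^{r'}$, but $p'\ne p$ for $p>2$, so splitting $e^{-tL}=e^{-(t/2)L}\circ e^{-(t/2)L}$ and routing through any intermediate space never returns you to the diagonal $L^{p}\to L^{p}$. All the bounds at your disposal in the range $p>2^{\ast}$ are strictly off the diagonal, and no Gagliardo--Nirenberg interpolation with the uniform $L^{2}$ bound repairs this.

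The paper's resolution bypasses the diagonal entirely. Rather than aiming for $L^{p}\to L^{p}$ directly, one \emph{iterates} the improving estimates along a chain $r_{0}\to r_{1}\to\cdots\to r_{j_{0}}$ with $r_{j}=r_{j-1}^{\ast}$, starting from some $r_{0}\in[2,2^{\ast})$ chosen so that $r_{j_{0}}=q^{\ast}$, and prefaced by the known $L^{2}\to L^{r_{0}}$ estimate from Theorem~\ref{pro22}. Composing $j_{0}+1$ copies of $e^{-tL}$ along this chain gives
\[
\|e^{-(j_{0}+1)tL}\|_{L^{2}\to L^{q^{\ast}}}\le C\,t^{-\frac{N}{2m}\bigl(\frac{1}{2}-\frac{1}{q^{\ast}}\bigr)},
\]
the powers of $t$ telescoping because $\tfrac{1}{2}=\tfrac{N}{2m}\bigl(\tfrac{1}{r}-\tfrac{1}{r^{\ast}}\bigr)$. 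Now one invokes the equivalences of Proposition~\ref{pro1'} together with the self-adjointness of $L$: an $L^{2}\to L^{q^{\ast}}$ estimate dualises to an $L^{(q^{\ast})'}\to L^{2}$ estimate, which by part~(ii) upgrades to $L^{p'}\!-\!L^{2}$ off-diagonal estimates for all $p'\in\bigl((q^{\ast})',2\bigr)$, and by part~(iii) to uniform $L^{p'}$-boundedness; duality again gives uniform $L^{p}$-boundedness for $p\in(2,q^{\ast})$. This machinery, built in Section~\ref{sec2} from the $L^{2}$ off-diagonal estimates, is exactly the positivity-free substitute for the ultracontractivity-to-$L^{p}$ passage you were reaching for.
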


\begin{proof}It follows from Theorem \ref{pro22}  that $e^{-tL}$ satisfies $L^{2}-L^{q}$ for all
$q\in [2,\frac{2N}{N-2m}]$ if $N>2m$. Thus by Proposition \ref{pro1'} and duality, we only need to prove this lemma for $\frac{2N}{N-2m}<q<\frac{N}{m}$.

For fixed $q\in[2,\frac{N}{m})$, interpolating by Riesz-Thorin theorem the $L^q$ and $L^2$ boundedness of $\sqrt{t}\nabla e^{-tL}$, we have that $\sqrt{t}\nabla e^{-tL}$
is bounded on $L^{r}$ for all
$2\leq r\leq q$.
 Then it follows from the Sobolev embedding theorem that
\begin{eqnarray}\label{eq26'}
\|e^{-tL}\|_{L^{r}-L^{r^{*}}}\leq Ct^{-\frac{1}{2}}, \ \ \ \  r\in [2,q].
\end{eqnarray}
Notice that there exists constants $j_{0}\in \mathbb{N}$ and $2\leq r_{0}<\frac{2N}{N-2m}$ such that
$q^{\ast}=\frac{Nr_{0}}{N-mj_{0}r_{0}}<\infty$ and $e^{-tL}$ satisfies $L^{2}$-$L^{r_{0}}$ estimates (see Theorem \ref{pro22}), then let $r_{j}=(r_{j-1})^{\ast}$ $(j=1,2,\ldots, j_{0})$, it follows from (\ref{eq26'}) that
\begin{eqnarray}
\|e^{-(j_{0}+1)tL}\|_{L^{2}-L^{q^{\ast}}}\leq \|e^{-tL}\|_{L^{2}-L^{r_{0}}}\|e^{-tL}\|_{L^{r_{0}}-L^{r_{1}}}\cdots\|e^{-tL}\|_{L^{r_{j_{0}-1}}-L^{r_{j_{0}}}}\leq Ct^{-\frac{N}{2m}(\frac{1}{2}-\frac{1}{q^\ast})},\nonumber
\end{eqnarray}
 which combined Proposition \ref{pro1'} and duality can finish the proof.
\end{proof}

The following lemma studies the relations between $L^{p}$ bounded, $L^{2}-L^{p}$ estimates and $L^{2}-L^{p}$ off-diagonal estimates for operator $\sqrt{t}\nabla^{m}e^{-tL}$ with $p>2$.

\begin{pro}\label{le52} Let $p\in (2,\infty]$, $N>2m$ and $L=L_{0}+V\in \mathscr{A}_{m}$ where $L_{0}$ is a homogeneous elliptic operator of order 2m in divergence form defined by (\ref{divergence}).  Then the following statements hold.

{\rm (i)} If $\{\sqrt{t}\nabla^{m}e^{-tL}\}_{t>0}$ is bounded on  $L^{p}(\mathbb{R}^{N})$, then it satisfies the $L^{2}-L^{p}$ estimates.

{\rm (ii)} If $\{\sqrt{t}\nabla^{m}e^{-tL}\}_{t>0}$ satisfies the $L^{2}-L^{p}$ estimates, then it satisfies the $L^{2}-L^{q}$ off-diagonal estimates for $2<q<p$.

{\rm (iii)} If $\{\sqrt{t}\nabla^{m}e^{-tL}\}_{t>0}$ satisfies the $L^{2}-L^{p}$ off-diagonal estimates, then it is  bounded  on  $L^{p}(\mathbb{R}^{N})$.
\end{pro}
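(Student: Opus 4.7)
The plan is to establish the three implications separately, with an algebraic semigroup splitting plus bootstrap for (i), a Riesz--Thorin interpolation between the two available bounds for (ii), and a cube/annular decomposition for (iii).

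For (i), I would start from the factorisation $\sqrt{t}\,\nabla^{m}e^{-tL}=\sqrt{2}\,(\sqrt{t/2}\,\nabla^{m}e^{-(t/2)L})\,e^{-(t/2)L}$. The first factor is uniformly bounded on $L^{p}$ by hypothesis, so it suffices to prove the $L^{2}$--$L^{p}$ estimate for $e^{-tL}$ itself. For $p\in[2,\frac{2N}{N-2m}]$ this is already contained in Theorem~\ref{pro22}, and for larger $p$ a bootstrap parallel to Proposition~\ref{prop-2}(ii) and Lemma~\ref{le005} applies: Riesz--Thorin interpolation between the assumed $L^{p}$ bound of $\sqrt{t}\nabla^{m}e^{-tL}$ and its $L^{2}$ bound gives uniform $L^{r}$ bounds for all $r\in[2,p]$, and Sobolev embedding then promotes these to $e^{-tL}:L^{r}\to L^{r^{*}}$ with norm $O(t^{-1/2})$ whenever $r^{*}=Nr/(N-mr)$ is finite. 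Composing the estimate $e^{-tL}:L^{2}\to L^{r_{0}}$ from Theorem~\ref{pro22} with finitely many Sobolev steps $e^{-tL}:L^{r_{j-1}}\to L^{r_{j}}$, $r_{j}=r_{j-1}^{*}$, and distributing the time variable among the factors yields $e^{-tL}:L^{2}\to L^{p}$ with the required decay $t^{-\frac{N}{2m}(1/2-1/p)}$; recomposing with the first factor closes (i).

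For (ii), I would interpolate the two available bounds at the level of truncated operators. Given disjoint closed sets $E,F\subset\mathbb{R}^{N}$ with projections $P_{E},P_{F}$, Theorem~\ref{pro20} provides
\[
\|P_{E}\sqrt{t}\nabla^{m}e^{-tL}P_{F}\|_{L^{2}\to L^{2}}\leq B\,e^{-b(d(E,F)/t^{1/2m})^{2m/(2m-1)}},
\]
while the hypothesis yields
\[
\|P_{E}\sqrt{t}\nabla^{m}e^{-tL}P_{F}\|_{L^{2}\to L^{p}}\leq C\,t^{-\frac{N}{2m}(\frac{1}{2}-\frac{1}{p})}.
\]
Riesz--Thorin with $\theta\in(0,1)$ determined by $\frac{1}{q}=\frac{1-\theta}{2}+\frac{\theta}{p}$ then produces the $L^{2}$--$L^{q}$ bound $C\,t^{-\theta\frac{N}{2m}(\frac{1}{2}-\frac{1}{p})}e^{-(1-\theta)b(d(E,F)/t^{1/2m})^{2m/(2m-1)}}$. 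The identity $\theta(\frac{1}{2}-\frac{1}{p})=\frac{1}{2}-\frac{1}{q}$ makes the power of $t$ exactly what Definition~\ref{def1} demands, and $(1-\theta)b>0$ preserves the exponential decay, giving the desired $L^{2}$--$L^{q}$ off-diagonal estimate.

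For (iii), I would run a standard cube/annulus decomposition. Fix $t>0$, cover $\mathbb{R}^{N}$ by an essentially disjoint lattice of cubes $\{Q_{k}\}$ of side length comparable to $t^{1/2m}$, and on each $Q_{k}$ split any $f\in L^{p}$ as $f=\sum_{j\geq 0}f\chi_{C_{j}(Q_{k})}$, where $C_{0}(Q_{k})=4Q_{k}$ and $C_{j}(Q_{k})=2^{j+1}Q_{k}\setminus 2^{j}Q_{k}$ for $j\geq 1$. Applying the $L^{2}$--$L^{p}$ off-diagonal bound annulus by annulus and then converting the $L^{2}$ norm on $C_{j}(Q_{k})$ to an $L^{p}$ norm via H\"older (costing $|C_{j}(Q_{k})|^{1/2-1/p}\sim(2^{j}t^{1/2m})^{N(1/2-1/p)}$) cancels the explicit power of $t$ and leaves the $j$-sum weighted by $2^{jN(1/2-1/p)}e^{-c\,2^{2jm/(2m-1)}}$, which is convergent. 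Summing by Minkowski in $j$ and then in $k$ via bounded overlap of the dilated cubes $\{2^{j+1}Q_{k}\}_{k}$ recovers $\|f\|_{L^{p}(\mathbb{R}^{N})}$ and proves the $L^{p}$-boundedness. The main technical obstacle I anticipate is controlling the bootstrap in (i) when $p$ is close to or exceeds $N/m$: the Sobolev iteration $r\mapsto r^{*}$ only approaches $N/m$ from below, so reaching $p\geq N/m$ requires a careful final interpolation step, whereas (ii) and (iii) are fairly mechanical consequences of the off-diagonal framework already set up in Section~\ref{sec2}.
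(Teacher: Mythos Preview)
Your proposal is essentially correct and closely parallels the paper's argument. For (ii) you interpolate between the $L^{2}$ off-diagonal estimate of Theorem~\ref{pro20} and the assumed $L^{2}$--$L^{p}$ estimate, exactly as the paper does. For (iii) you unfold the cube/annulus decomposition directly, while the paper instead dualises and invokes \cite[Lemma~3.3]{A2007} applied to $e^{-tL^{\ast}}(\nabla^{m})^{\ast}$; these are the same argument viewed from opposite sides of the pairing. For (i) with finite $p$ your bootstrap is precisely the content of Lemma~\ref{le005}: one picks $q\in(2,N/m)$ with $q<p<q^{\ast}$, uses the interpolated $L^{q}$ bound of $\sqrt{t}\nabla^{m}e^{-tL}$ plus Sobolev to land in $L^{q^{\ast}}\supset L^{p}$.

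The one genuine gap is the endpoint $p=\infty$ in (i). Your diagnosis locates the obstacle at ``$p$ close to or exceeds $N/m$'', to be fixed by ``a careful final interpolation step'', but for finite $p\ge N/m$ the bootstrap already works as written (choose $q$ just below $N/m$ so that $q^{\ast}>p$), and for $p=\infty$ interpolation alone cannot close the argument since there is no endpoint bound to interpolate against. The paper handles $p=\infty$ by a Morrey-type step: having established the $L^{2}$--$L^{q}$ estimate for $\sqrt{t}\nabla^{m}e^{-tL}$ for some $N/m<q<N/(m-1)$, the embedding $W^{m,q}\hookrightarrow C^{0,\gamma}$ with $\gamma=m-N/q$ controls the oscillation of $e^{-tL}f$ over balls of radius $t^{1/2m}$, and averaging the resulting pointwise inequality recovers $\|e^{-tL}f\|_{L^{\infty}}\le Ct^{-N/4m}\|f\|_{L^{2}}$.
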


\begin{proof}
We prove (i) first. When $N>2m$, by the identity
$\sqrt{t}\nabla^{m}e^{-tL}=\sqrt{2}\sqrt{\frac{t}{2}}\nabla^{m}e^{-\frac{t}{2}L}e^{-\frac{t}{2}L}$, the fact that $\sqrt{t}\nabla^{m}e^{-tL}$ is bounded on  $L^{p}(\mathbb{R}^{N})$ and Theorem \ref{pro22}, it suffices to prove that $e^{-tL}$  satisfies  $L^{2}-L^{p}$ estimates for $p>\frac{2N}{N-2m}$.
We will prove it by splitting $p$ into two parts.

Assume that $\frac{2N}{N-2m}<p<\infty$, then there exists a constant $2<q<\frac{N}{m}$ such that $q<p<q^{\ast}=\frac{Nq}{N-mq}$. On the other hand, since $2<q< p$, it follows that  $\sqrt{t}\nabla^{m}e^{-tL}$ is bounded on  $L^{q}(\mathbb{R}^{N})$. Thus, by the proof of  Lemma \ref{le005}, we can obtain that $e^{-tL}$  satisfies  the $L^{2}-L^{q^{\ast}}$ estimates, which implies the result as desired.

Assume that $p=\infty$, we have to prove that $e^{-tL}$ satisfies the $L^{2}-L^{\infty}$ estimates. Notice that
$\sqrt{t}\nabla^{m}e^{-tL}$ is bounded on $L^{q}(\mathbb{R}^N)$ for all $2\leq q\leq \infty$, then
by choosing $\frac{N}{m}<q<\frac{N}{m-1}$ and the Sobolev embedding theorem, we have $W^{m,q}(\mathbb{R}^{N})\hookrightarrow C^{0,\gamma}(\mathbb{R}^{N})$ with $\gamma=m-\frac{N}{q}$. Hence, for $t>0$ and $x,y\in \mathbb{R}^{N}$
\begin{eqnarray}
|e^{-tL}f(x)-e^{-tL}f(y)|\leq C\|\nabla^{m}e^{-tL}\|_{L^{q}(\mathbb{R}^{N})}|x-y|^{\gamma}\leq Ct^{-\frac{1}{2}-\frac{N}{2m}(\frac{1}{2}-\frac{1}{q})}\|f\|_{L^{2}(\mathbb{R}^{N})}|x-y|^{\gamma}.\nonumber
\end{eqnarray}
where we use the fact that $\sqrt{t}\nabla^{m}e^{-tL}$ satisfies $L^{2}-L^{q}$ estimates for all $\frac{N}{m}<q<\frac{N}{m-1}$ as we just proved it. Now fix $x\in \mathbb{R}^{N}$, let $B$ be the ball with center $x$ and radius $r=t^{\frac{1}{2m}}$, we average the square of the above inequality to get
\begin{eqnarray}
|e^{-tL}f(x)|\leq C|B|^{-\frac{1}{2}}\|e^{-tL}f\|_{L^{2}(B)}+Cr^{\gamma}t^{-\frac{1}{2}-\frac{N}{2m}(\frac{1}{2}-\frac{1}{q})}\|f\|_{L^{2}(\mathbb{R}^{N})}
\leq Ct^{-\frac{N}{4m}}\|f\|_{L^{2}(\mathbb{R}^{N})},\nonumber
\end{eqnarray}
which implies that $e^{-tL}$ is bounded from  $L^{2}$ to $L^{\infty}$.
 Therefore, we finish the proof of (i).

Since we have proved $\sqrt{t}\nabla^{m}e^{-tL}$ $(t>0)$ satisfies $L^{2}$ off-diagonal estimate. Thus the statement (ii) follows easily by interpolation.

The statement (iii) can be obtained by applying \cite[Lemma 3.3]{A2007} to $T=e^{-tH^{\ast}}(\nabla^{m})^{\ast}$ and the duality argument.
\end{proof}

Now we turn to show the main result of this subsection which connects $\nabla^{m}L^{-\frac{1}{2}}$ with  $\sqrt{t}\nabla^{m}e^{-tL}$ on $L^{q}$ for $q>2$.

\begin{pro}\label{pro31}
Let $L=L_{0}+V\in \mathscr{A}_{m}$ where $L_{0}$ is a homogeneous elliptic operator of order 2m in divergence form defined by (\ref{divergence}). Then:

{\rm (i)} If $\nabla^{m}L^{-\frac{1}{2}}$ is bounded on $L^{p}$ for $2<p<\infty$, then $\{\sqrt{t}\nabla^{m}e^{-tL}\}_{t>0}$
satisfies $L^{2}-L^{q}$ off-diagonal estimates for all $2<q<p$.

{\rm (ii)} If $N>2m$ and $\{\sqrt{t}\nabla^{m}e^{-tL}\}_{t>0}$ satisfies $L^{2}-L^{p}$  off-diagonal  estimates for $2<p<\infty$, then $\nabla^{m}L^{-\frac{1}{2}}$ is bounded on $L^{q}$ with $2<q<p$.
\end{pro}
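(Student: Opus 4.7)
The key identities driving the proof are the integral representation
$$\nabla^m L^{-1/2}f=c_m\int_0^\infty \sqrt{t}\,\nabla^m e^{-tL}f\,\frac{dt}{t}$$
and the factorisation
$$\sqrt{t}\,\nabla^m e^{-tL}=(\nabla^m L^{-1/2})\cdot\sqrt{t}\,L^{1/2}e^{-tL}.$$
The auxiliary operator $\sqrt{t}L^{1/2}e^{-tL}$ is treated via the standard formula $L^{1/2}e^{-tL}=\pi^{-1/2}\int_0^\infty L e^{-(s+t)L}s^{-1/2}ds$; after the substitution $s=tu$ it appears as an average of $\tau L e^{-\tau L}$ against the integrable weight $du/((1+u)\sqrt{u})$, so that Theorems \ref{thm1} and \ref{pro22} transfer $L^2$ off-diagonal estimates and $L^p$ boundedness on the Sobolev interval $I_{m,N}$ from $\tau Le^{-\tau L}$ to $\sqrt{t}L^{1/2}e^{-tL}$.

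For (i), the factorisation combined with the hypothesis that $\nabla^m L^{-1/2}$ is bounded on $L^p$ reduces the task to proving uniform $L^p$ boundedness of $\sqrt{t}\nabla^m e^{-tL}$; once this is known, Proposition \ref{le52} directly yields the desired $L^2$-$L^q$ off-diagonal estimates for every $2<q<p$. When $p\in I_{m,N}$ the $L^p$ boundedness of $\sqrt{t}L^{1/2}e^{-tL}$ established above is enough. For $p$ outside $I_{m,N}$ a bootstrap is needed: from $\sqrt{t}\nabla^m e^{-tL}$ bounded on some $L^{p_0}$, Lemma \ref{le005} promotes $e^{-tL}$ to an operator bounded on $L^{p_0^*}$, analyticity of the semigroup on $L^{p_0^*}$ yields uniform $L^{p_0^*}$ boundedness of $\tau L e^{-\tau L}$, and the factorisation together with the hypothesis on $\nabla^m L^{-1/2}$ propagates $L^{p_0^*}$ boundedness to $\sqrt{t}\nabla^m e^{-tL}$. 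Iterating through Sobolev exponents reaches every index up to $p$; keeping track of uniform constants in this induction is the main technical subtlety of part (i).

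For (ii), we apply Lemma \ref{le006} to $T=\nabla^m L^{-1/2}$ with $A_r=\mathbf{I}-(\mathbf{I}-e^{-r^{2m}L})^M$, choosing $M\in\mathbb{N}$ with $M>N/(4m)$. To verify $(\ref{cond27})$ one expands $(\mathbf{I}-e^{-r^{2m}L})^M$ binomially, inserts the integral representation of $\nabla^m L^{-1/2}$, and decomposes $f$ over the annuli $S_j(B)$; the $L^2$ off-diagonal estimates of $\sqrt{u}\nabla^m e^{-uL}$ furnished by Theorem \ref{pro20}, together with the polynomial decay gained from the large choice of $M$, bound the resulting series by $\mathcal{M}(|f|^2)^{1/2}(y)$. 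For $(\ref{cond28})$ we rely on the identity
$$\nabla^m L^{-1/2}e^{-tL}g=\pi^{-1/2}\int_t^\infty\frac{1}{\sqrt{u-t}}\,\nabla^m e^{-uL}g\,du,$$
expand $A_r$ binomially, and decompose $\nabla^m L^{-1/2}f$ over $\{S_j(B)\}$; the $L^2$-$L^p$ off-diagonal hypothesis then provides the pointwise domination by $\mathcal{M}(|\nabla^m L^{-1/2}f|^2)^{1/2}(y)$. The delicate point in (ii) is the precise balance between the $L^2$-$L^p$ off-diagonal decay and the polynomial growth of the annular volumes, which forces the choice $M>N/(4m)$.
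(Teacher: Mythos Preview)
Your approach to part (i) works but is more roundabout than the paper's. The paper avoids the bootstrap entirely: from the $L^p$ boundedness of $\nabla^m L^{-1/2}$ and Sobolev embedding one gets $L^{-1/2}:L^r\to L^{r^*}$ for all $r\in[2,p]$, hence $L^{-j_0/2}:L^{r_0}\to L^p$ for suitable $r_0\in[2,\tfrac{2N}{N-2m}]$ and $j_0\in\mathbb{N}$. Writing $e^{-tL}=L^{-j_0/2}e^{-tL/2}(L^{j_0/2}e^{-tL/2})$ and using the $L^2$-$L^{r_0}$ estimate for $e^{-tL/2}$ (Theorem \ref{pro22}) together with the $L^2$ bound $\|L^{j_0/2}e^{-tL/2}\|_{L^2-L^2}\le Ct^{-j_0/2}$ from functional calculus gives directly that $e^{-tL}$ satisfies $L^2$-$L^p$ estimates. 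Then $\sqrt{t}\nabla^m e^{-tL}=(\nabla^m L^{-1/2})(e^{-tL/2})(\sqrt{t}L^{1/2}e^{-tL/2})$ yields the $L^2$-$L^p$ estimate for $\sqrt{t}\nabla^m e^{-tL}$ in one stroke, and Proposition \ref{le52}(ii) finishes. Your iteration through $L^{p_0}$-boundedness and analyticity reaches the same place but carries more technical baggage (in particular, you only get $e^{-tL}$ bounded on $L^r$ for $r<p_0^*$ from Lemma \ref{le005}, so each step lands strictly below the next Sobolev exponent and you must manage this shrinkage).

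For part (ii) there is a genuine gap in your verification of \eqref{cond28}. Your integral identity
$$\nabla^m L^{-1/2}e^{-tL}f=\pi^{-1/2}\int_t^\infty\frac{1}{\sqrt{u-t}}\,\nabla^m e^{-uL}f\,du$$
expresses $TA_r f$ through $\nabla^m e^{-uL}$ acting on $f$ itself, not on $Tf=\nabla^m L^{-1/2}f$. Applying the $L^2$-$L^p$ off-diagonal hypothesis to $\chi_{S_j(B)}f$ then yields control by $\mathcal{M}(|f|^2)^{1/2}(y)$, which is \eqref{cond27} again, not \eqref{cond28}. There is no mechanism in your formula by which ``decomposing $\nabla^m L^{-1/2}f$ over $\{S_j(B)\}$'' arises.

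The paper's argument for \eqref{cond28} is different and is precisely where the restriction $N>2m$ enters. One writes $\nabla^m L^{-1/2}e^{-\ell r^{2m}L}f=\nabla^m e^{-\ell r^{2m}L}(L^{-1/2}f)$, decomposes the function $h=L^{-1/2}f$ over annuli, and applies the $L^2$-$L^{q_0}$ off-diagonal estimate for $\sqrt{t}\nabla^m e^{-tL}$ to $\chi_{S_j(B)}h$. This produces $\|h\|_{L^2(S_j(B))}$, which is then converted to $\|\nabla^m h\|_{L^2}=\|Tf\|_{L^2}$ via the Rellich inequality $\|\,|x|^{-m}h\|_{L^2}\le C\|\nabla^m h\|_{L^2}$ (valid for $N>2m$), using that $|x|\sim 2^j r$ on $S_j(B)$. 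This Rellich step is the missing idea in your sketch; without it one cannot pass from localized $L^2$ norms of $L^{-1/2}f$ to those of $\nabla^m L^{-1/2}f$.
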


\begin{proof}
We first prove (i). By assumptions, it is easy to see that $\nabla^{m}L^{-\frac{1}{2}}$ is bounded on $L^{r}$ for $2\leq r\leq p$, which combined the Sobolev embedding theorem imply that
\begin{eqnarray}\label{equation-3}
\|L^{-\frac{1}{2}}f\|_{L^{r^{\ast}}}\leq C\|\nabla^{m}L^{-\frac{1}{2}}f\|_{L^{r}}\leq C\|f\|_{L^{r}},\ \ 2\leq r\leq p,\ r<\frac{N}{m},  \ r^{\ast}=\frac{Nr}{N-rm}.
\end{eqnarray}
We choose constants $j_{0}\in \mathbb{N}$ and $2\leq r_{0}\leq \frac{2N}{N-2m}$ such that $p=\frac{Nr_{0}}{N-mj_{0}r_{0}}<\infty$ and let  $r_{j}=(r_{j-1})^{\ast}$ for  $j=1,2,\cdots, j_0$. It follows from (\ref{equation-3}) that
\begin{eqnarray}\label{eq56}
L^{-j_{0}/2}:\ \ \ L^{r_{0}}(\mathbb{R}^{N})\rightarrow\ L^{r_{j_{0}}}(\mathbb{R}^{N}).
\end{eqnarray}
Now write
\begin{eqnarray}\label{eq4-2}
e^{-tL}f=L^{-j_{0}/2}e^{-\frac{t}{2}L}(L^{j_{0}/2}e^{-\frac{t}{2}L})f.
\end{eqnarray}
Notice that $L^{j_{0}/2}e^{-\frac{t}{2}L}$ is bounded on $L^{2}$ with
bound $Ct^{-j_{0}/2}$ and  $e^{-\frac{t}{2}L}$
satisfies the $L^{2}$-$L^{r_{0}}$ estimates (see Theorem \ref{pro22}), then it follows from (\ref{eq56}) and (\ref{eq4-2}) that $e^{-tL}$ satisfies $L^{2}-L^{p}$ estimates.
 Thus, we have
\begin{eqnarray}
\|\nabla^{m}e^{-tL}\|_{L^2-L^p}&=&\|\nabla^{m}L^{-\frac{1}{2}}e^{-\frac{t}{2}L}(L^{\frac{1}{2}}e^{-\frac{t}{2}L})\|_{L^2-L^p}\nonumber\\
&\leq&\|\nabla^{m}L^{-\frac{1}{2}}\|_{L^{p}-L^{p}}
\|e^{-\frac{t}{2}L}\|_{L^{2}-L^{p}}\|L^{\frac{1}{2}}e^{-\frac{t}{2}L}\|_{L^2-L^2}\leq Ct^{-1/2},\nonumber
\end{eqnarray}
which combined Proposition \ref{le52} finish the proof.

Let us turn to the proof of (ii). For given $2<p<\infty$ and  every $2<q<p$, it suffice to apply Lemma \ref{le006} to the operator $T=\nabla^{m}L^{-\frac{1}{2}}$ with $q_{0}$, where $2<q<q_{0}<p$. For every ball open $B$ with radius $r=r(B)$
and $A_{r}={\bf I}-({\bf I}-e^{-r^{2m}L})^{M}$
where $M\in \mathbb{N}$ would be chosen later, we will finish the proof of (ii) in the following two steps.

Step 1. We show that for $f\in L^{q_{0}}(\mathbb{R}^{N})\cap L^{2}(\mathbb{R}^{N})$
\begin{eqnarray}\label{eq57}
\|\nabla^{m}L^{-\frac{1}{2}}({\bf I}-e^{-r^{2m}L})^{M}f\|_{L^{2}(B)}\leq C|B|^{1/2}\sum_{j\geq1}g(j)
\Big(\frac{1}{|2^{j+1}B|}\int_{2^{j+1}B}|f|^{2}dx\Big)^{\frac{1}{2}},
\end{eqnarray}
with $g(j)=2^{j(\frac{N}{2}-2mM)}$. Let $S_{j}(B)$ $(j\geq1)$ be defined as in Section \ref{sec3'}. Then by Minkowski inequality we have
\begin{eqnarray}
\|\nabla^{m}L^{-\frac{1}{2}}({\bf I}-e^{-r^{2m}L})^{M}f\|_{L^{2}(B)}\leq \sum_{j\geq1}
\|\nabla^{m}L^{-\frac{1}{2}}({\bf I}-e^{-r^{2m}L})^{M}(\chi_{S_{j}(B)}f)\|_{L^{2}(B)}.\nonumber
\end{eqnarray}
For $j=1$, by the $L^{2}$ boundedness of $\nabla^{m}L^{-\frac{1}{2}}$ and $e^{-tL}$, we have
\begin{eqnarray}
\|\nabla^{m}L^{-\frac{1}{2}}({\bf I}-e^{-r^{2m}L})^{M}(\chi_{S_{1}(B)}f)\|_{L^{2}(B)}\leq C\|f\|_{L^{2}(B)}
\leq C|4B|^{1/2}\Big(\frac{1}{|4B|}\int_{4B}|f|^{2}dx\Big)^{\frac{1}{2}}.\nonumber
\end{eqnarray}
When $j\geq2$, it follows from the same idea as in the proof of (\ref{equation-1}) that
\begin{eqnarray}
\|\nabla^{m}L^{-\frac{1}{2}}({\bf I}-e^{-r^{2m}L})^{M}(\chi_{S_{j}(B)}f)\|_{L^{2}(B)}\leq C2^{-2mMj}\|f\|_{L^{2}(S_{j}(B))},\nonumber
\end{eqnarray}
which implies (\ref{eq57}) immediately. By choosing $M>\frac{N}{4m}$ in (\ref{eq57}), we can establish (\ref{cond27}).

Step 2. We show that (\ref{cond28}) holds. In fact, Notice that $A_{r}=\sum^{M}_{\ell=1}C_{M,\ell}e^{-\ell
r^{2m}L}$, we first prove that
\begin{eqnarray}\label{eq58}
\Big(\frac{1}{|B|}\int_{B}\Big|\nabla^{m}e^{-\ell
r^{2m}L}f(x)\Big|^{q_{0}}dx\Big)^{\frac{1}{q_{0}}}\leq C\sum_{j\geq1}g(j)\Big(\frac{1}{|2^{j+1}B|}\int_{2^{j+1}B}|\nabla^{m}f(x)|^{2}dx\Big)^{\frac{1}{2}},
\end{eqnarray}
for all $\ell=1,\ldots,M$ with $\sum_{j\geq1}g(j)<\infty$. Let $S_{j}(B)$ $(j\geq1)$ be defined as above. For $j=1$, by the assumption and Proposition \ref{le52},
we have $\sqrt{t}\nabla^{m}e^{-tL}$ also satisfies the $L^{2}-L^{q_{0}}$ estimate. Thus
\begin{eqnarray}
\Big(\frac{1}{|B|}\int_{B}\Big|\nabla^{m}e^{-\ell
r^{2m}L}(\chi_{S_{1}(B)}f)(x)\Big|^{q_{0}}dx\Big)^{\frac{1}{q_{0}}}\leq C|B|^{-\frac{1}{q_{0}}}r^{m+N(\frac{1}{q_{0}}-\frac{1}{2})}\|f\|_{L^{2}(4B)}.\nonumber
\end{eqnarray}
When $j\geq2$, it follows from the $L^{2}-L^{p}$ off-diagonal estimate for $\sqrt{t}\nabla^{m}e^{-tL}$ that
\begin{eqnarray}
\Big(\frac{1}{|B|}\int_{B}\Big|\nabla^{m}e^{-\ell
r^{2m}L}(\chi_{S_{j}(B)}f)(x)\Big|^{q_{0}}dx\Big)^{\frac{1}{q_{0}}}
&\leq& C|B|^{-\frac{1}{q_{0}}}r^{m+N(\frac{1}{q_{0}}-\frac{1}{2})}e^{-2^{\frac{2mj}{2m-1}}}\|f\|_{L^{2}(S_{j}(B))}.\nonumber
\end{eqnarray}

On the other hand, for every $j\geq1$, by Rellich inequality (see \cite[Corollary. 14]{D-H}), we have for $N>2m$
\begin{eqnarray}
\Big(\int_{S_{j}(B)}|f(x)|^{2}dx\Big)^{\frac{1}{2}}&\leq& (2^{j}r)^{m}\Big(\frac{1}{(2^{j}r)^{2m}}\int_{S_{j}(B)}|f(x)|^{2}dx\Big)^{\frac{1}{2}}\nonumber\\
&\leq&C(2^{j}r)^{m}\Big(\int_{\mathbb{R}^{N}}\frac{|\chi^{2}_{S_{j}(B)}f(x)|^{2}}{|x|^{2m}}dx\Big)^{\frac{1}{2}}\nonumber\\
&\leq&C(2^{j}r)^{m}\|\nabla^{m}f\|_{L^{2}(S_{j}(B))}.\nonumber
\end{eqnarray}
Thus we have
\begin{eqnarray}
\Big(\frac{1}{|B|}\int_{B}\Big|\nabla^{m}e^{-\ell
r^{2m}L}f\Big|^{q_{0}}dx\Big)^{\frac{1}{q_{0}}}&\leq& C\sum_{j\geq1}2^{-2mMj+mj+\frac{N}{2}j}
\Big(\frac{1}{|2^{j+1}B|}\int_{2^{j+1}B}|\nabla^{m}f(x)|^{2}dx\Big)^{\frac{1}{2}}\nonumber
\end{eqnarray}
which   means (\ref{eq58}) by choosing large enough M. Now applied to $f=L^{-1/2}g$ gives us (\ref{cond28}).
Therefore, we finish the proof.
\end{proof}

\begin{rem}
{\rm If $V=0$, the restriction $N>2m$ in (ii) of Proposition \ref{pro31} can be removed due to the conversation property $e^{-tL_{0}}1=1$ (see \cite{A2007}). In our paper, we adapt a different approach which require $N>2m$ since $e^{-tL}1=1$ may no longer hold if $V\neq0$.
}
\end{rem}

\subsection{The potential class $L^{\frac{N}{2m},\infty}(\mathbb{R}^{N})$}\label{sec4-2}

In this subsection, we focus on the Riesz transform associated to $L\in \mathscr{A}_{m}(\rho,\Lambda,\mu)$ with extra assumption that $V\in L^{\frac{N}{2m},\infty}(\mathbb{R}^{N})$ on $L^q$ for $q>2$. By Proposition \ref{pro31}, we will mainly consider the $L^{2}-L^{p}$ off-diagonal estimates for $\sqrt{t}\nabla^{m}e^{-tL}$.

First of all, we introduce two important constants.
Denote $s_{m,p}$ ($1\leq p<\frac{N}{m}$) the constant such that
$\|f\|_{L^{p^{\ast}}}\leq s_{m,p}\|\nabla^{m}f\|_{L^{p}}.$
Recall the weak type H\"{o}lder inequality (see \cite[Lemma 4.1]{As}), let $h_{p,q,r}$ be the constant such that
\begin{eqnarray}\label{eq5-1}
\|fg\|_{L^{p}}\leq h_{p,q,r}\|f\|_{L^{r}_{w}}\|g\|_{L^{q}},
\end{eqnarray}
where $f\in L^{r}_{w}(\mathbb{R}^{N})$, $g\in L^{q}(\mathbb{R}^{N})$ and $\frac{1}{p}=\frac{1}{q}+\frac{1}{r}$ for all $p,q,r\in (1,\infty)$.

\begin{rem}\label{rem4-2}{\rm
 Let $N>2m$ and $V\in L^{\frac{N}{2m},\infty}(\mathbb{R}^{N})$,  by the weak H\"{o}lder inequality (\ref{eq5-1}) we have
\begin{eqnarray}\label{extra2}
\int_{\mathbb{R}^{N}}V_{-}(x)|f(x)|^{2}dx&\leq& \|V^{\frac{1}{2}}_{-}f\|_{L^{2}}\|V^{\frac{1}{2}}_{-}f\|_{L^{2}}
\leq h^{2}\|V^{\frac{1}{2}}_{-}\|^{2}_{L^{\frac{N}{m}}_{w}}\|f\|^{2}_{L^{\frac{2N}{N-2m}}}\nonumber\\
&\leq& h^{2}s^{2}_{m,2}\||V|^{\frac{1}{2}}\|^{2}_{L^{\frac{N}{m}}_{w}}\|\nabla^{m}f\|^{2}_{L^{2}}\nonumber\\
&\leq& h^{2}s^{2}_{m,2}\|V\|_{L^{\frac{N}{2m},\infty}}\rho^{-1}
\Big{(}\mathcal {Q}_{0}(f,f)+\int_{\mathbb{R}^N}V_{+}(x)|f(x)|^2dx \Big{)}
\end{eqnarray}
where $\rho$ is defined in (\ref{strong}) and $h=h_{2,\frac{2N}{N-2m},\frac{N}{m}}$ in (\ref{eq5-1}).  Thus if $\mathcal {Q}_{0}$ satisfies (\ref{Q_0 form})-(\ref{strong}) and $V\in L^{\frac{N}{2m},\infty}(\mathbb{R}^{N})$ with
\begin{eqnarray}\label{extra10}
\|V\|_{L^{\frac{N}{2m},\infty}}<\Theta:=\rho(hs_{m,2})^{-2},
\end{eqnarray}
then by (\ref{extra2}), $L=L_{0}+V\in\mathscr{A}_{m}(\rho,\Lambda,\mu)$ automatically where $\mu\leq \Theta^{-1}\|V\|_{L^{\frac{N}{2m},\infty}}$ and $L_{0}$ is the operator associated to $\mathcal {Q}_{0}$.
}
\end{rem}

\begin{lem}\label{le7}
Let $N>2m$ and $L=L_{0}+V\in \mathscr{A}_{m}(\rho,\Lambda,\mu)$ where $L_{0}$ is a homogeneous elliptic operator of order 2m in divergence form defined by (\ref{divergence}) and $V\in L^{\frac{N}{2m},\infty}(\mathbb{R}^{N})$. Then there exist a constant
$r_{0}\in [1,2)$, such that the operator $L+{\bf I}$ is bounded and
invertible from $W^{m,p}(\mathbb{R}^{N})$ to $W^{-m,p}(\mathbb{R}^{N})$ with $p$ satisfying
$|1/2-1/p|<|1/2-1/r_{0}|$.
\end{lem}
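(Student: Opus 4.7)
The plan is to combine Lax--Milgram at $p=2$ with a stability-under-interpolation argument to push the isomorphism property to a neighbourhood of $p=2$. This naturally splits into three steps.

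First, at $p=2$ the strongly subcritical condition (\ref{condition}) together with the G\aa rding inequality (\ref{strong}) imply that the shifted form $\mathcal{Q}(f,g)+\langle f,g\rangle$ is continuous and coercive on $W^{m,2}(\mathbb{R}^{N})$:
\begin{equation*}
\operatorname{Re}\!\bigl(\mathcal{Q}(f,f)+\|f\|_{L^{2}}^{2}\bigr)\ge(1-\mu)\rho\|\nabla^{m}f\|_{L^{2}}^{2}+(1-\mu)\|f\|_{L^{2}}^{2}\gtrsim\|f\|_{W^{m,2}}^{2}.
\end{equation*}
Lax--Milgram therefore provides a bounded inverse $(L+{\bf I})^{-1}\colon W^{-m,2}(\mathbb{R}^{N})\to W^{m,2}(\mathbb{R}^{N})$.

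Second, I would show that $L+{\bf I}$ is bounded from $W^{m,p}(\mathbb{R}^{N})$ to $W^{-m,p}(\mathbb{R}^{N})$ for every $p$ in the open interval $\bigl(\tfrac{N}{N-m},\tfrac{N}{m}\bigr)$, which contains $2$ since $N>2m$. The term $L_{0}+{\bf I}$ is bounded on every $L^{p}$-scale because its divergence-form coefficients lie in $L^{\infty}$. For the potential term, Sobolev embedding gives $W^{m,p}\hookrightarrow L^{p^{\ast}}$ and $W^{m,p'}\hookrightarrow L^{(p')^{\ast}}$ with $1/p^{\ast}+1/(p')^{\ast}=1-2m/N$, so the weak H\"older inequality (\ref{eq5-1}) yields
\begin{equation*}
|\langle Vf,g\rangle|\le h\,\|V\|_{L^{N/(2m),\infty}}\|f\|_{L^{p^{\ast}}}\|g\|_{L^{(p')^{\ast}}}\le C\,\|V\|_{L^{N/(2m),\infty}}\|f\|_{W^{m,p}}\|g\|_{W^{m,p'}},
\end{equation*}
and hence by duality $V\colon W^{m,p}\to W^{-m,p}$ is bounded throughout $\bigl(\tfrac{N}{N-m},\tfrac{N}{m}\bigr)$.

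Third, I would invoke \v{S}ne\u{\i}berg's stability-of-invertibility theorem applied to the complex interpolation scales $(W^{m,p})_{p}$ and $(W^{-m,p})_{p}$: a bounded family across such a scale that is invertible at one interior point remains invertible on an open neighbourhood of that point. Applied to $T=L+{\bf I}$ at $p=2$, this produces an open subinterval of $\bigl(\tfrac{N}{N-m},\tfrac{N}{m}\bigr)$ around $2$ on which $L+{\bf I}$ is an isomorphism. Choosing $r_{0}\in[1,2)$ so that this neighbourhood contains $\{p\colon|\tfrac{1}{2}-\tfrac{1}{p}|<|\tfrac{1}{2}-\tfrac{1}{r_{0}}|\}$ yields exactly the claimed conclusion.

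The principal obstacle is Step~3, namely applying the stability result correctly: one must verify that $\{W^{m,p}\}$ and $\{W^{-m,p}\}$ form genuine complex interpolation couples on the relevant range (classical for Bessel-potential scales) and that the $p$-dependent operator norms of $L+{\bf I}$ are controlled across the window from Step~2. The hypothesis $N>2m$ enters essentially in Step~2 to guarantee that both $p^{\ast}$ and $(p')^{\ast}$ are finite for $p$ close to~$2$; Steps~1 and~2 are otherwise routine bookkeeping around the form inequality and the weak H\"older estimate.
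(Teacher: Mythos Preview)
Your proposal is correct and follows essentially the same approach as the paper: establish boundedness of $L+{\bf I}:W^{m,p}\to W^{-m,p}$ on an interval around $p=2$ via the weak H\"older inequality and Sobolev embedding, verify invertibility at $p=2$ by Lax--Milgram (the paper cites \cite[Proposition~2]{A-T} for this), and then apply \v{S}ne\u{\i}berg's stability theorem. Your restriction of the boundedness step to $p\in\bigl(\tfrac{N}{N-m},\tfrac{N}{m}\bigr)$ is in fact more accurate than the paper's stated ``all $1<p<\infty$'', since the Sobolev exponents $p^{\ast}$ and $(p')^{\ast}$ used in the potential estimate are only finite on that range.
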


\begin{proof}
 Let $f\in W^{m,p}(\mathbb{R}^{N})$ and $g\in
W^{m,p'}(\mathbb{R}^{N})$ with $1/p+1/p'=1$, by the weak H\"{o}lder inequality (\ref{eq5-1}) and Sobolev embedding theorem, we have
\begin{eqnarray}\label{eq02-e}
\int_{\mathbb{R}^{N}}V(x)f(x)\overline{g}(x)dx&\leq&\overline{h}\|V\|_{L^{\frac{N}{2m},\infty}}\|fg\|_{L^{N/N-2m}}\nonumber\\
&\leq&\overline{h}\|V\|_{L^{\frac{N}{2m},\infty}}\|f\|_{L^{Np/N-2m}}\|g\|_{L^{Np'/N-2m}}\nonumber\\
&\leq&\overline{h}s_{m,p}s_{m,p'}\|V\|_{L^{\frac{N}{2m},\infty}}\|f\|_{W^{m,p}}\|g\|_{W^{m,p'}},
\end{eqnarray}
where $\overline{h}=h_{1,\frac{N}{N-2m},\frac{N}{2m}}$ in (\ref{eq5-1}). Thus, since $a_{\alpha,\beta}\in L^{\infty}(\mathbb{R}^{N},\mathbb{C})$, we can obtain
\begin{eqnarray}\label{eq4-4}
&&|\langle (L+{\bf I})f,g\rangle|\nonumber\\
&\le&
\int_{\mathbb{R}^{N}}\bigg(\sum_{|\alpha|=|\beta|=m}\big|a_{\alpha\beta}(x)\partial^{\alpha}f(x)
\overline{\partial^{\beta}g}(x)\big|+\big|V(x)f(x)\overline{g}(x)\big|+\big|f(x)\overline{g}(x)\big|\bigg)dx\nonumber\\
&\leq&C\|f\|_{W^{m,p}}\|g\|_{W^{m,p'}},
\end{eqnarray}
which means that $L+{\bf I}$ is bounded from $W^{m,p}(\mathbb{R}^{N})$ to $W^{-m,p}(\mathbb{R}^{N})$
for all $1<p<\infty$. On the other hand, it follows from the same argument as in the proof of \cite[Proposition 2]{A-T} that $L+{\bf I}$ is invertible from
$W^{m,2}(\mathbb{R}^{N})$ to $W^{-m,2}(\mathbb{R}^{N})$. Thus, applying a result of Sneiberg
\cite{Sn} (see also \cite[~Lemma 23]{A-T}), there exists an
$r_{0}\in [1,2)$, such that the operator $L+{\bf I}$ is bounded and
invertible from $W^{m,p}(\mathbb{R}^{N})$ to $W^{-m,p}(\mathbb{R}^{N})$ with $p$ satisfying
$|1/2-1/p|<|1/2-1/r_{0}|$.
\end{proof}

\begin{rem}
{\rm We need to emphasize that the constant $r_{0}$ may depend on $\rho,\ \Lambda,\ \mu$, $\|V\|_{L^{\frac{N}{2m},\infty}}$ and the Sobolev constants, which will plays an important role in the study of this subsection.
}
\end{rem}

\begin{lem}\label{le8}
Let $N>2m$ and $L=L_{0}+V\in \mathscr{A}_{m}(\rho,\Lambda,\mu)$ where $L_{0}$ is a homogeneous elliptic operator of order 2m in divergence form defined by (\ref{divergence}) and $V\in L^{\frac{N}{2m},\infty}$. Assume that $r_{0}$ is defined as in Lemma \ref{le7}. Then for all $p\in (\frac{Nr_{0}}{N+mr_{0}},2]$, $e^{-tL}$ satisfies the $L^{p}-L^{2}$ estimates.
\end{lem}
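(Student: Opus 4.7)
The plan is to extend the $L^p$-$L^2$ estimates of Theorem \ref{pro22} (valid for $p\in[\frac{2N}{N+2m},2]$ when $N>2m$) down to $p\in(\frac{Nr_0}{N+mr_0},2]$ by a bootstrap in the exponent. The engine is the algebraic identity $e^{-tL}f=(L+\lambda I)\,e^{-tL}(L+\lambda I)^{-1}f$ with $\lambda=1/t$, combined with a scaling-invariant version of Lemma \ref{le7}.

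First, I will establish the weighted resolvent smoothing
\[
\|(L+\lambda I)^{-1}\|_{L^r\to L^{r^*}}\lesssim \lambda^{-1/2}\qquad(\lambda>0,\ r\in(r_0,r_0')\cap(1,N/m)),
\]
where $r^*=Nr/(N-mr)$. To prove this, I use the dilation $L_\delta=\delta^{-2m}\mathcal U_\delta^{-1}L\mathcal U_\delta$ introduced in the proof of Theorem \ref{thm1}. Crucially both $\mathscr A_m(\rho,\Lambda,\mu)$ and $\|V\|_{L^{N/2m,\infty}}$ are invariant under this scaling, so Lemma \ref{le7} applies to $L_\delta$ with the \emph{same} constant $r_0$ and a uniform operator norm. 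Using the identity $(L+\lambda)^{-1}=\lambda^{-1}\mathcal U_\delta(L_\delta+I)^{-1}\mathcal U_\delta^{-1}$ with $\delta=\lambda^{1/2m}$, composing with the Sobolev embedding $W^{m,r}\hookrightarrow L^{r^*}$ (valid because $r<N/m$), and recording $\|\mathcal U_\delta\|_{L^p\to L^p}=\delta^{N(1/2-1/p)}$, the scaling factor $\delta^{N(1/r-1/r^*)}=\delta^m=\lambda^{1/2}$ combines with the prefactor $\lambda^{-1}$ to yield the stated bound.

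Next, assuming inductively that $\|e^{-tL}\|_{L^s\to L^2}\lesssim t^{-\frac N{2m}(1/s-1/2)}$ holds for all $s\in[q_k,2]$, with $q_0=\frac{2N}{N+2m}$ provided by Theorem \ref{pro22}, I target a $p$ such that $p^*\in(\max(r_0,q_k),2]$ and $p<N/m$. Setting $\lambda=1/t$ and $g=(L+\lambda I)^{-1}f$, the resolvent estimate yields $\|g\|_{L^{p^*}}\lesssim t^{1/2}\|f\|_{L^p}$. Then
\[
e^{-tL}f=(L+\lambda I)e^{-tL}g=Le^{-tL}g+\lambda e^{-tL}g,
\]
and using $\|Le^{-tL/2}\|_{L^2\to L^2}\lesssim t^{-1}$ from the analyticity of the semigroup on $L^2$, together with the induction hypothesis applied to $e^{-tL/2}:L^{p^*}\to L^2$, each term on the right is controlled by $t^{-1}\cdot t^{-\frac N{2m}(1/p^*-1/2)}\|g\|_{L^{p^*}}$. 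Inserting the bound on $\|g\|_{L^{p^*}}$ and using $1/p^*=1/p-m/N$ produces the desired scaling $t^{-\frac N{2m}(1/p-1/2)}\|f\|_{L^p}$, extending the induction to $s\in[q_{k+1},2]$ with $q_{k+1}=\frac{N\max(r_0,q_k)}{N+m\max(r_0,q_k)}$. Since the iterates decrease through $\frac{2N}{N+2(k+1)m}\to 0$, after finitely many steps $q_k\le r_0$, and the next iterate equals $\frac{Nr_0}{N+mr_0}$, completing the proof.

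The main obstacle will be the first step: making the resolvent smoothing $\|(L+\lambda)^{-1}\|_{L^r\to L^{r^*}}\lesssim\lambda^{-1/2}$ genuinely uniform in $\lambda$. The subtle point is that the constant in Lemma \ref{le7} depends a priori on $\rho,\Lambda,\mu$ and $\|V\|_{L^{N/2m,\infty}}$; the argument works only because the dilated operator $L_\delta$ stays in the same class $\mathscr A_m(\rho,\Lambda,\mu)$ with unchanged weak-type norm for $V$. Once this uniformity is secured, the resolvent identity and the inductive extension of the exponent range are routine.
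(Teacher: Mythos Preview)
Your approach is close in spirit to the paper's: both hinge on the dilation invariance of $\mathscr{A}_m(\rho,\Lambda,\mu)$ and of $\|V\|_{L^{N/2m,\infty}}$, so that Lemma~\ref{le7} applies to every $L_\delta$ with the same $r_0$ and uniform constants. The paper first proves the bound at $t=1$ by iterating $(L+I)^{-1}:W^{-m,r}\to W^{m,r}$ through Sobolev exponents (deferring to \cite[Corollary~3.1]{D-D-Y1}) and then rescales to recover general $t$; you rescale the resolvent first to obtain a $\lambda$-uniform smoothing estimate and then bootstrap. These are essentially rearrangements of the same argument.

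There is, however, a genuine gap in your bootstrap. You establish the resolvent smoothing $\|(L+\lambda)^{-1}\|_{L^r\to L^{r^*}}\lesssim\lambda^{-1/2}$ only for the \emph{source} exponent $r\in(r_0,r_0')$, via the embedding $W^{m,r}\hookrightarrow L^{r^*}$. But when you invoke it in the induction you impose the condition $p^*\in(\max(r_0,q_k),2]$, a constraint on the \emph{target} exponent $p^*>r_0$, not on $p$. Since $p<p^*$, the condition $p^*>r_0$ does not force $p>r_0$; in particular your last iteration step---where $q_k\le r_0$ and you need $p\in(\frac{Nr_0}{N+mr_0},r_0]$---falls outside the range $p>r_0$ that you actually proved. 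The repair is to exploit the full strength of Lemma~\ref{le7}: since $(L+I)^{-1}$ maps $W^{-m,r}$ (not merely $L^r$) into $W^{m,r}\hookrightarrow L^r$, the dual Sobolev embedding $L^{r_*}\hookrightarrow W^{-m,r}$ (valid when $r'<N/m$, automatic here for $r$ near $2$ since $N>2m$) yields $(L+\lambda)^{-1}:L^{r_*}\to L^r$ for $r\in(r_0,r_0')$. Relabelling $p=r_*$, $p^*=r$, this is exactly the smoothing with $p^*>r_0$, i.e.\ $p>\frac{Nr_0}{N+mr_0}$, that your final step requires.
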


\begin{proof}
Assume first $t=1$, it follows from the same method as in the proof of \cite[Corollary 3.1]{D-D-Y1} that $\|e^{-L}\|_{L^{p}-L^{2}}\leq C$ where $p\in (\frac{Nr_{0}}{N+mr_{0}},2]$.

Now let $L_{\delta}=\delta^{-2m}\mathcal {U}^{-1}_{\delta}L\mathcal {U}_{\delta}$ be defined by (\ref{scaling-operator}), it follows that $L_{\delta}\in \mathscr{A}_{m}(\rho,\Lambda,\mu)$ for the same constants $\rho,\Lambda,\mu$ as $L$.
Moreover, for all $\delta>0$ and $V_{\delta}$ defined as in (\ref{extra2-1}), we have that the inequality (\ref{eq02-e}) and (\ref{eq4-4}) also  for $V_{\delta}$ and $L_{\delta}$ respectively with the same bounds.
Thus $L_{\delta}$ and $L$ are the same type which means that $\|e^{-L_{\delta}}\|_{L^{p}-L^{2}}\leq C$ where $p\in (\frac{Nr_{0}}{N+mr_{0}},2]$ and $C>0$ doesnot depend on $\delta$. Then by a scaling argument as in the proof of \cite[Corollary 3.1]{D-D-Y1} or \cite[Corollary 4.5]{A2007}, we can finish the proof.
\end{proof}

\begin{thm}\label{pro5-1}
Let $N>2m$ and $L=L_{0}+V\in \mathscr{A}_{m}(\rho,\Lambda,\mu)$ where $L_{0}$ is a homogeneous elliptic operator of order 2m in divergence form defined by (\ref{divergence}) and $V\in L^{\frac{N}{2m},\infty}(\mathbb{R}^{N})$. There exist $\varepsilon>0$ such that for all $p\in (2,2+\varepsilon)$, $\sqrt{t}\nabla^{m}e^{-tL}$ satisfies the $L^{2}-L^{p}$ estimates.
\end{thm}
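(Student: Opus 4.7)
\textbf{Proof plan for Theorem \ref{pro5-1}.} The strategy is to first prove the bound at the single time $t=1$ using Lemma \ref{le7} together with the $L^{2}$--$L^{p}$ estimates for $\{e^{-tL}\}$ and $\{tLe^{-tL}\}$ already available from Theorem \ref{pro22}, and then upgrade to general $t>0$ by a dilation argument that exploits the scaling invariance of $\|V\|_{L^{N/2m,\infty}}$. The scalar $\varepsilon$ will arise as the width of the overlap between the Sneiberg range of Lemma \ref{le7} and the Sobolev range $I_{m,N}$.

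For the $t=1$ estimate I would write
\begin{eqnarray*}
\nabla^{m}e^{-L} \;=\; \nabla^{m}(L+{\bf I})^{-1}\,(L+{\bf I})\,e^{-L}
\;=\; \nabla^{m}(L+{\bf I})^{-1}\bigl(Le^{-L}+e^{-L}\bigr).
\end{eqnarray*}
Lemma \ref{le7} provides some $r_{0}\in[1,2)$ such that $L+{\bf I}:W^{m,p}\to W^{-m,p}$ is bounded and invertible whenever $|1/2-1/p|<|1/2-1/r_{0}|$, i.e.\ $p\in(r_{0},r_{0}')$. Since $L^{p}\hookrightarrow W^{-m,p}$ and $W^{m,p}\hookrightarrow L^{p}$, this implies that $\nabla^{m}(L+{\bf I})^{-1}$ is bounded on $L^{p}$ for such $p$. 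On the other hand Theorem \ref{pro22} tells us that $e^{-L}$ and $Le^{-L}$ (i.e.\ $tLe^{-tL}|_{t=1}$) are $L^{2}$--$L^{p}$ bounded for every $p\in I_{m,N}=[2,\tfrac{2N}{N-2m}]$. Choosing $p\in\bigl(2,\min(r_{0}',\tfrac{2N}{N-2m})\bigr)$ and composing the three bounds gives $\|\nabla^{m}e^{-L}f\|_{L^{p}}\le C\|f\|_{L^{2}}$.

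To pass from $t=1$ to general $t>0$, I would invoke the rescaled operator $L_{\delta}=\delta^{-2m}\mathcal{U}_{\delta}^{-1}L\mathcal{U}_{\delta}$ from (\ref{scaling-operator}). The key structural point is that $\|V_{\delta}\|_{L^{N/2m,\infty}}=\|V\|_{L^{N/2m,\infty}}$, because the weak-$L^{N/2m}$ quasi-norm is homogeneous of degree $-2m$ under $x\mapsto\delta^{-1}x$ and the prefactor $\delta^{-2m}$ cancels it exactly. Therefore $L_{\delta}\in\mathscr{A}_{m}(\rho,\Lambda,\mu)$ with the same parameters and the same weak-norm data as $L$. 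Consequently Lemma \ref{le7} yields the same $r_{0}$ for $L_{\delta}$, and Theorem \ref{pro22} applies with uniform constants, so the previous paragraph's bound
\[
\|\nabla^{m}e^{-L_{\delta}}g\|_{L^{p}}\le C\|g\|_{L^{2}}
\]
holds with $C$ independent of $\delta>0$. Setting $\delta=t^{-1/(2m)}$ so that $t\delta^{2m}=1$, using $e^{-tL}=\mathcal{U}_{\delta}e^{-L_{\delta}}\mathcal{U}_{\delta}^{-1}$, $\nabla^{m}\mathcal{U}_{\delta}=\delta^{m}\mathcal{U}_{\delta}\nabla^{m}$, $\|\mathcal{U}_{\delta}h\|_{L^{p}}=\delta^{N/2-N/p}\|h\|_{L^{p}}$ and the unitarity of $\mathcal{U}_{\delta}$ on $L^{2}$, the powers of $\delta$ collapse to $\delta^{m+N/2-N/p}=t^{-1/2-\frac{N}{2m}(\frac{1}{2}-\frac{1}{p})}$, which after multiplying by $\sqrt{t}$ is precisely the $L^{2}$--$L^{p}$ estimate for $\sqrt{t}\nabla^{m}e^{-tL}$. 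One then takes $\varepsilon=\min\bigl(r_{0}'-2,\tfrac{2N}{N-2m}-2\bigr)>0$.

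The main obstacle is not algebraic but \emph{uniformity}: everything rests on the claim that the constants delivered by Lemma \ref{le7} and Theorem \ref{pro22} depend on $L$ only through $\rho,\Lambda,\mu,\|V\|_{L^{N/2m,\infty}}$ and the Sobolev constants, every one of which is preserved by the dilation $L\mapsto L_{\delta}$. This is precisely the invariance theme already used in Section 2 (Theorems \ref{thm1}, \ref{pro20}, \ref{pro22}) to strip off the $e^{ct}$ factors produced by the Davies perturbation method; the same mechanism suffices here. Once this uniformity is granted, the scaling is what automatically provides the correct $t$-power and delivers the desired estimate for \emph{all} $t>0$, not merely for small or large $t$.
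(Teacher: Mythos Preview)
Your proposal is correct and follows essentially the same route as the paper: prove the estimate at $t=1$ via Lemma \ref{le7}, then transfer to all $t>0$ by the dilation $L\mapsto L_{\delta}$, using that both the form parameters $(\rho,\Lambda,\mu)$ and the weak norm $\|V\|_{L^{N/2m,\infty}}$ are scale invariant so that the $r_{0}$ of Lemma \ref{le7} and the constants of Theorem \ref{pro22} are uniform in $\delta$.

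The only difference worth recording is the decomposition at $t=1$. You use the single-step factorization $\nabla^{m}e^{-L}=\nabla^{m}(L+{\bf I})^{-1}\bigl(Le^{-L}+e^{-L}\bigr)$ and feed in the $L^{2}\to L^{p}$ bounds of Theorem \ref{pro22}, which forces the additional constraint $p\le \tfrac{2N}{N-2m}$ and hence yields $\varepsilon=\min\bigl(r_{0}'-2,\tfrac{2N}{N-2m}-2\bigr)$. The paper instead writes $\nabla^{m}e^{-L}=\nabla^{m}(L+{\bf I})^{-k}\,(L+{\bf I})^{k}e^{-L}$ and iterates the isomorphism $(L+{\bf I})^{-1}:W^{-m,p}\to W^{m,p}$ (together with the Sobolev embeddings) to push $(L+{\bf I})^{-k}$ all the way from $L^{2}$ to $W^{m,p}$ for every $p<r_{0}'$, obtaining the larger value $\varepsilon=r_{0}'-2$. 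Since the theorem only asserts the existence of \emph{some} $\varepsilon>0$, your simpler $k=1$ argument suffices; the paper's iteration buys the sharper explicit range that is then used in the proof of Theorem \ref{thm5}.
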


\begin{proof}Let $r_{0}$ be defined as in Lemma \ref{le7} and $p>2$ such that $|1/2-1/p|<|1/2-1/r_{0}|$. It is easy to see that $2<p<r'_{0}$. Now let $\varepsilon=r'_{0}-2$, we will prove that $\{\sqrt{t}\nabla^{m}e^{-tL}\}_{t>0}$ satisfy the $L^{2}-L^{p}$ estimate for all $p\in (2,2+\varepsilon)$. To this end, write
$$\nabla^{m}e^{-L}f=\nabla^{m}(L+{\bf I})^{k}(L+{\bf I})^{-k}e^{-L}f,$$
where $k$ will be chosen later. By applying the same procedure as in the proof of  \cite[Corollary 3.1]{D-D-Y1}, we can determine a $k$ such that
$(L+{\bf I})^{-k}$ maps $L^{2}(\mathbb{R}^{N})$ into $W^{m,p}(\mathbb{R}^{N})$, then by the analyticity of
the semigroup $e^{-tL}$, we have that $\|\nabla^{m}e^{-L}\|_{L^{2}-L^{p}}\leq C$ for all $p\in (2,2+\varepsilon)$. Thus, by using scaling as above, we can obtain that $\sqrt{t}\nabla^{m}e^{-tL}$ satisfies the $L^{2}-L^{p}$ estimates for all $p\in (2,2+\varepsilon)$.
\end{proof}
\vskip0.2cm

{\bf \emph{The proof of Theorem \ref{thm5}:}}\ Let $L$ be defined as in Theorem \ref{thm5}, it follows from Theorem \ref{pro5-1} and Proposition \ref{le52}  that $\sqrt{t}\nabla^{m}e^{-tL}$ satisfies the $L^{2}-L^{p}$ off-diagonal estimates for all $p\in (2,2+\varepsilon)$. Thus by using (ii) of Proposition \ref{pro31}, we have that $\nabla^{m}L^{-1/2}$ is bounded on $L^{q}$ for all $q\in (2,2+\varepsilon)$.

On the other hand, by Theorem \ref{pro5-1}, Proposition \ref{le52}, lemma \ref{le005} and duality, we have  that $e^{-tL}$  is bounded on $L^{p}$  for all $p\in(\frac{(2+\varepsilon)N}{(1+\varepsilon)N+(2+\varepsilon)m},\frac{(2+\varepsilon)N}{N-(2+\varepsilon)m})$. Then it follows from Proposition \ref{pro1'}  and (i) of Proposition \ref{prop-2}
that  $\nabla^{m}L^{-1/2}$ is bounded on $L^{q}$ for all $q\in(\frac{(2+\varepsilon)N}{(1+\varepsilon)N+(2+\varepsilon)m},2)$. Hence we finish the proof.

\subsection{Small potentials}\label{sec4-3}

In Theorem \ref{thm5}, we extend the upper bound $2$ to $2+\varepsilon$ by assuming that $L=L_{0}+V\in \mathscr{A}_{m}(\rho,\Lambda,\mu)$ and $V\in L^{\frac{N}{2m},\infty}(\mathbb{R}^{N})$, where $\varepsilon>0$ is small and may depends on $\rho,\ \Lambda,\ \mu$ and $\|V\|_{L^{\frac{N}{2m},\infty}}$. In this subsection, by imposing extra conditions,
we will obtain a larger interval for the boundedness of Riesz transform. First of all, let us see the following theorem dealing with  $\{\sqrt{t}\nabla^{m}e^{-tL}\}_{t>0}$ on $L^{p}$ for some $p>2$.

\begin{thm}\label{ethm2}Let $N>2m$ and $L=L_{0}+V\in \mathscr{A}_{m}$ where $L_{0}$ is a homogeneous elliptic operator of order 2m in divergence form defined by (\ref{divergence}). Assume that  ($A_{1}$) and ($A_{2}$) hold for some $2<q_{0}<\frac{N}{m}$. Then there exist small constant $\delta_{q_{0}}>0$ depending on $q_{0}$ such that when
\begin{eqnarray}\label{eq1-1}
\|V\|_{L^{\frac{N}{2m},\infty}}\leq \delta_{q_{0}},
\end{eqnarray}
$\sqrt{t}\nabla^{m}e^{-tL}$ is  bounded on $L^{q_{0}}(\mathbb{R}^{N})$.
\end{thm}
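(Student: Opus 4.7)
The plan is to establish uniform $L^{q_0}$-boundedness of $\sqrt{t}\nabla^{m}e^{-tL}$ via a perturbative expansion of $e^{-tL}$ around $e^{-tL_{0}}$, exploiting the smallness of $\|V\|_{L^{N/2m,\infty}}$ to sum the resulting Dyson series.

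I would first secure the base case. Writing
\begin{equation*}
\sqrt{t}\nabla^{m}e^{-tL_{0}}=(\nabla^{m}L_{0}^{-1/2})(\sqrt{t}\,L_{0}^{1/2}e^{-tL_{0}}),
\end{equation*}
the first factor is bounded on $L^{q_{0}}$ by hypothesis $(A_{1})$. For the second factor, $(A_{1})$ combined with the $L^{2}$ off-diagonal framework of Section \ref{sec2} and an Auscher-style extrapolation (in the spirit of Theorems \ref{pro22} and \ref{thm003}) promotes the $L^{2}$-analyticity of the semigroup generated by $L_{0}$ to $L^{q_{0}}$-analyticity, so $\sqrt{t}L_{0}^{1/2}e^{-tL_{0}}$ is uniformly bounded on $L^{q_{0}}$; parallel bounds hold for $e^{-tL_{0}}$, $tL_{0}e^{-tL_{0}}$, and the resolvents $(L_{0}+\lambda)^{-1}$.

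Next I would expand via the Dyson (iterated Duhamel) series
\begin{equation*}
e^{-tL}=\sum_{n=0}^{\infty}(-1)^{n}I_{n}(t),\quad I_{n}(t)=\int_{\Delta_{n}(t)}e^{-(t-s_{n})L_{0}}Ve^{-(s_{n}-s_{n-1})L_{0}}V\cdots Ve^{-s_{1}L_{0}}\,d\vec{s},
\end{equation*}
with $\Delta_{n}(t)=\{0\le s_{1}\le\cdots\le s_{n}\le t\}$. By Remark \ref{rem1-3}(i), condition $(A_{2})$ is equivalent to $V(I+L_{0})^{-1}$ being bounded on $L^{q_{0}}$; combined with the weak-type H\"older inequality $\|Vh\|_{L^{p}}\le c\|V\|_{L^{N/2m,\infty}}\|h\|_{L^{r}}$ (with $1/p=1/r+2m/N$) and Sobolev embedding, this yields $\|V(I+L_{0})^{-1}\|_{L^{q_{0}}\to L^{q_{0}}}\le C\|V\|_{L^{N/2m,\infty}}\le C\delta_{q_{0}}$. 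I would pair each interior $V$ in $I_{n}$ with an adjacent resolvent via the identity $Ve^{-uL_{0}}=V(I+L_{0})^{-1}\cdot(I+L_{0})e^{-uL_{0}}$, so that each $V$-factor contributes $C\delta_{q_{0}}$ while $(I+L_{0})e^{-uL_{0}}$ has $L^{q_{0}}$-norm $\lesssim 1+1/u$; the borderline singularity is absorbed against the simplex measure after a suitable change of variables. Applying $\sqrt{t}\nabla^{m}$ places the derivative on the outermost semigroup $e^{-(t-s_{n})L_{0}}$, producing the factor $C\sqrt{t/(t-s_{n})}$, which I would handle by splitting the outermost integration into $\{s_{n}\le t/2\}$ and $\{s_{n}>t/2\}$. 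Tallying these contributions should deliver
\begin{equation*}
\sqrt{t}\,\|\nabla^{m}I_{n}(t)\|_{L^{q_{0}}\to L^{q_{0}}}\le C(C\delta_{q_{0}})^{n}.
\end{equation*}

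The principal obstacle is ensuring that each of the $n$ factors of $V$ genuinely carries a multiplicative $\delta_{q_{0}}$ while the borderline singularities from the interior semigroups are absorbed against the simplex volume $|\Delta_{n}(t)|=t^{n}/n!$ without triggering factorial growth that would spoil the geometric decay. Once the bound $C(C\delta_{q_{0}})^{n}$ is in hand uniformly in $t$, choosing $\delta_{q_{0}}$ so small that $C\delta_{q_{0}}<1$ makes the Dyson series absolutely convergent in the operator norm on $L^{q_{0}}$, yielding the uniform $L^{q_{0}}$-boundedness of $\sqrt{t}\nabla^{m}e^{-tL}$ claimed by the theorem.
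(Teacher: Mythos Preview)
Your Dyson series strategy differs from the paper's and, as written, has a genuine gap. The pairing $Ve^{-uL_{0}}=V(I+L_{0})^{-1}\cdot(I+L_{0})e^{-uL_{0}}$ produces the norm bound $\|(I+L_{0})e^{-uL_{0}}\|_{L^{q_{0}}\to L^{q_{0}}}\lesssim 1+u^{-1}$, and this $u^{-1}$ singularity is \emph{not} integrable near $u=0$. Already at $n=1$ your bound reads $\int_{0}^{t}\sqrt{t/(t-s)}\,(1+1/s)\,ds=\infty$, so the simplex volume $t^{n}/n!$ cannot ``absorb'' anything: the integrand is not locally integrable, and no change of variables rescues a logarithmic divergence. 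The obstacle you flag is thus more serious than factorial growth; the estimate simply diverges.

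The paper avoids time integrals entirely by working with resolvents: one expands $(\mathbf{I}+tL)^{-1}$ as a Neumann series via $(\mathbf{I}+tL_{0})^{-1/2}\sum_{n\ge0}\big(V(t^{-1}+L_{0})^{-1}\big)^{n}$ and, crucially, factorises $V=|V|^{1/2}\,{\rm sgn}(V)\,|V|^{1/2}$, so that each building block $|V|^{1/2}(t^{-1}+L_{0})^{-1/2}$ and $(t^{-1}+L_{0})^{-1/2}|V|^{1/2}$ is bounded on $L^{q_{0}}$ (respectively $L^{q_{0}'}$) by $C\delta_{q_{0}}^{1/2}$ via weak H\"older and Sobolev. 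This yields $\|\nabla^{m}(\mathbf{I}+tL)^{-1}\|_{L^{q_{0}}\to L^{q_{0}}}\lesssim t^{-1/2}$ directly, with no singular integrals. The passage to $\sqrt{t}\nabla^{m}e^{-tL}$ then requires the additional ingredient $(\mathbf{I}+tL)e^{-tL}$ bounded on $L^{q_{0}}$, which is nontrivial when $q_{0}>\frac{2N}{N-2m}$ and is obtained by a bootstrap through Sobolev exponents (Lemma \ref{le005}). If you want to salvage the Duhamel route, the $|V|^{1/2}$ splitting is the missing idea: it turns each $u^{-1}$ into two $u^{-1/2}$ factors distributed across adjacent semigroups, which \emph{are} integrable over the simplex.
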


Before proving Theorem \ref{ethm2}, we consider functional calculus for $L_{0}$ defined by (\ref{divergence}). It is easy to see that the self-adjoint operator $L_{0}$ is densely defined with dense range.  Thus it follows from McIntosh and Yagi \cite{M-Y} that $L_{0}$ has a bounded $H^{\infty}(S^{o}_{\nu})$ functional calculus for all $0<\nu<\pi$ on $L^{2}(\mathbb{R}^{N})$.  Moreover, we have the following result.

\begin{pro}\label{thm-H}
Assume that $L_{0}$ is a homogeneous elliptic operator of order $2m$ in divergence form defined by (\ref{divergence}) and satisfies ($A_{1}$) for some $q_{0}>2$. Then $L_{0}$ has a bounded $H^{\infty}(S^{o}_{\nu})$ functional calculus on $L^{p}(\mathbb{R}^{N})$ for all $\nu>0$ and $p\in (\frac{Nq'_{0}}{N+mq'_{0}},\frac{Nq_{0}}{N-mq_{0}})$ if $q_{0}<\frac{N}{m}$ and $p\in (1,\infty)$ if $q_{0}\geq\frac{N}{m}$.
\end{pro}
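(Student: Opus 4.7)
Since $L_0$ is a non-negative self-adjoint operator on $L^2(\mathbb{R}^N)$ with dense range, McIntosh's theorem gives that $L_0$ admits a bounded $H^{\infty}(S^o_\nu)$ functional calculus on $L^2$ for every $\nu\in(0,\pi)$. My whole task is therefore to extend this $L^2$ calculus to $L^p$ for $p$ in the stated range, and the plan is to do so via the off-diagonal machinery built up in Sections \ref{sec2} and \ref{sec4-1}, combined with the known extension principle of Blunck--Kunstmann and Auscher.

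First, note that with $V=0$ one has $L_0\in\mathscr{A}_m$. Under assumption $(A_1)$, the operator $\nabla^m L_0^{-1/2}$ is bounded on $L^{q_0}$, so Proposition \ref{pro31}(i) yields that $\{\sqrt{t}\nabla^m e^{-tL_0}\}_{t>0}$ satisfies the $L^2$--$L^q$ off-diagonal estimates for every $2<q<q_0$. I would then bootstrap via the Sobolev embedding $W^{m,q}(\mathbb{R}^N)\hookrightarrow L^{q^\ast}(\mathbb{R}^N)$ for $q<N/m$: each application promotes $L^2$--$L^p$ off-diagonal estimates of $e^{-tL_0}$ to exponent $p=q^\ast$, while preserving the off-diagonal exponential factor coming from composition (Lemma \ref{le25}). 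Using the $L^2$--$L^p$ off-diagonal estimates from Theorem \ref{pro22} as a base case, after finitely many iterations $\{e^{-tL_0}\}_{t>0}$ enjoys the $L^2$--$L^p$ off-diagonal estimates for all $p\in[2,\tfrac{Nq_0}{N-mq_0})$ in the subcritical case $q_0<N/m$, and for all $p\in[2,\infty]$ in the supercritical case $q_0\geq N/m$ (using the H\"older continuity trick from the proof of Proposition \ref{le52}(i) to cross the threshold $p=N/m$ and reach $p=\infty$).

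Second, by self-adjointness of $L_0$, taking adjoints converts these estimates into $L^{p'}$--$L^2$ off-diagonal estimates on the symmetric dual interval, namely for $p'\in(\tfrac{Nq_0'}{N+mq_0'},2]$ when $q_0<N/m$ and for $p'\in(1,2]$ when $q_0\geq N/m$. With two-sided off-diagonal estimates in hand, I would invoke the standard extension principle of Blunck--Kunstmann \cite{B-K03} and Auscher \cite{A2007}: for a non-negative self-adjoint operator $L$ on $L^2$ whose semigroup satisfies $L^{p_0}$--$L^2$ and $L^2$--$L^{p_0'}$ off-diagonal estimates for some $p_0<2$, the bounded $H^{\infty}$ calculus on $L^2$ extends to a bounded $H^{\infty}(S^o_\nu)$ calculus on $L^p$ for every $p\in(p_0,p_0')$ and every $\nu\in(0,\pi)$. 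Taking $p_0=\tfrac{Nq_0'}{N+mq_0'}$ (respectively $p_0$ arbitrarily close to $1$ in the supercritical case) delivers exactly the claimed range.

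The principal obstacle, in my view, is driving the bootstrap in the first step all the way up to the Sobolev endpoint without losing the exponential off-diagonal decay: at each Sobolev step one must invoke Lemma \ref{le25} carefully to compose off-diagonal estimates, and one needs the gradient off-diagonal estimates of Theorems \ref{pro20} and \ref{thm003} as inputs rather than merely uniform $L^p$ bounds. A secondary technical point is to cite the correct version of the Blunck--Kunstmann--Auscher extension theorem, namely the one that applies to operators whose semigroups need not have pointwise Gaussian kernel bounds (indispensable here, since for $m\geq2$ the kernel of $e^{-tL_0}$ is generally not positive and has no such Gaussian majorant).
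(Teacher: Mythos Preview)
Your proposal is correct and follows essentially the same route as the paper: invoke Proposition \ref{pro31}(i) with $V=0$ to obtain $L^2$--$L^q$ regularity of $\sqrt{t}\nabla^m e^{-tL_0}$, bootstrap via Sobolev embedding to $L^2$--$L^p$ off-diagonal estimates for $e^{-tL_0}$ on the full range, dualize, and conclude by Blunck--Kunstmann \cite[Theorem 1.2]{B-K03}. The only organizational difference is that the paper packages the bootstrap by first passing through $L^p$ boundedness (Lemma \ref{le005}) and then recovering off-diagonal bounds via the equivalences in Propositions \ref{pro1'} and \ref{le52}, which sidesteps exactly the delicacy you flag in your last paragraph about preserving the exponential factor through each Sobolev step.
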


\begin{proof}
By hypothesises and (i) of Proposition \ref{pro31} (with $V=0$), it is easy to see that $\sqrt{t}\nabla^{m}e^{-tL_{0}}$ satisfies $L^{2}-L^{q_{0}}$ estimates, which combined Proposition \ref{le52}, Lemma \ref{le005} and the Sobolev embedding theorem imply that
that $e^{-tL_{0}}$ satisfies the $L^{2}-L^{p}$ off-diagonal estimates for $p\in (2,\frac{Nq_{0}}{N-mq_{0}})$ if $q_{0}<\frac{N}{m}$ and $p\in (2,\infty)$ if $q_{0}\geq\frac{N}{m}$.
The $L^{p}-L^{2}$ off-diagonal estimates for $e^{-tL_{0}}$ with corresponding $p$ can be obtained by duality. Then we can finish the proof  by using  Blunck and  Kunstmann \cite[Theorem 1.2]{B-K03}.
\end{proof}

\vskip0.4cm
{\bf \emph{The proof of Theorem \ref{ethm2}:}}
Let us first show that the operators $(\textbf{I}+tL_{0})^{-1/2}|V|^{1/2}$,
$(t^{-1}+L_{0})^{-1/2}|V|^{1/2}$ and $|V|^{1/2}(t^{-1}+L_{0})^{-1/2}$ are bounded on $L^{q_{0}}$. To this end, notice that $(A_{1})$ holds for $2<q_{0}<\frac{N}{m}$, then for all $t>0$, let $F_{t}(z)=(\frac{tz}{1+tz})^{1/2}$ with $\Re z\geq0$ and
$\nu=\frac{\pi}{2}$, by
 Proposition \ref{thm-H}, we have that $F_{t}\in H^{\infty}(\Sigma_{\nu})$ and
\begin{eqnarray}\label{eq6'}
\|L^{1/2}_{0}(t^{-1}+L_{0})^{-1/2}\|_{L^{q_{0}'}-L^{q_{0}'}}\leq c_{q_{0}'},
\end{eqnarray}
where $\frac{1}{q_{0}}+\frac{1}{q_{0}'}=1$. Moreover, it follows from (i) of Remark \ref{rem1-3} that $\nabla^{m}L^{-1/2}_{0}$ is bounded on $L^{q'_{0}}$.
Thus by (\ref{eq6'}) and the weak type H\"{o}lder inequality (\ref{eq5-1}), we have that
\begin{eqnarray}\label{eq6-3}
\Big\||V|^{1/2}(\textbf{I}+tL_{0})^{-1/2}\Big\|_{L^{q_{0}'}-L^{q_{0}'}}&\leq& \Big\||V|^{1/2}L_{0}^{-1/2}\Big\|_{L^{q_{0}'}-L^{q_{0}'}}
\Big\|L_{0}^{1/2}(\textbf{I}+tL_{0})^{-1/2}\Big\|_{L^{q_{0}'}-L^{q_{0}'}}\nonumber\\
&\leq&c_{q_{0}'}t^{-\frac{1}{2}}\Big\||V|^{1/2}L_{0}^{-1/2}\Big\|_{L^{q_{0}'}-L^{q_{0}'}}\nonumber\\
&\leq& c_{q_{0}'}h_{q_{0}'}t^{-\frac{1}{2}}\Big\||V|^{1/2}\Big\|_{L^{\frac{N}{m},\infty}}\Big\|L_{0}^{-1/2}\Big\|_{L^{q_{0}'}-L^{(q_{0}')^{\ast}}}\nonumber\\
&\leq&c_{q_{0}'}h_{q_{0}'}s_{m,q_{0}'}\delta^{\frac{1}{2}}_{q_{0}}t^{-\frac{1}{2}}
\Big\|\nabla^{m}L_{0}^{-1/2}\Big\|_{L^{q_{0}'}-L^{q_{0}'}}
\leq C_{q_{0}'}\delta^{\frac{1}{2}}_{q_{0}} t^{-\frac{1}{2}},
\end{eqnarray}
where  $z_{q'_{0}}:=\|\nabla^{m} L^{-\frac{1}{2}}_{0}\|_{L^{q'_{0}}-L^{q'_{0}}}$, $h_{q_{0}'}=h_{q_{0}',\frac{Nq_{0}'}{N-mq_{0}'},\frac{N}{m}}$ and $C_{q_{0}'}=c_{q_{0}'}h_{q_{0}'}s_{m,q_{0}'}z_{q_{0}'}$. The same procedure as above can be to apply to obtain
\begin{eqnarray}
\Big\||V|^{1/2}(t^{-1}+L_{0})^{-1/2}\Big\|_{L^{q_{0}'}-L^{q_{0}'}}\leq t^{\frac{1}{2}}
\Big\||V|^{1/2}L_{0}^{-1/2}\Big\|_{L^{q_{0}'}-L^{q_{0}'}}
\Big\|L_{0}^{1/2}(\textbf{I}+tL_{0})^{-1/2}\Big\|_{L^{q_{0}'}-L^{q_{0}'}}
\leq C_{q_{0}'}\delta^{\frac{1}{2}}_{q_{0}}.\nonumber
\end{eqnarray}
Then by duality, we have
\begin{eqnarray}\label{eq10'}
\Big\|(\textbf{I}+tL_{0})^{-1/2}|V|^{1/2}\Big\|_{L^{q_{0}}-L^{q_{0}}}
=\Big\||V|^{1/2}(\textbf{I}+tL_{0})^{-1/2}\Big\|_{L^{q_{0}'}-L^{q_{0}'}}
\leq C_{q_{0}'}\delta^{\frac{1}{2}}_{q_{0}}t^{-\frac{1}{2}},
\end{eqnarray}
and
\begin{eqnarray}\label{eq11'}
\Big\|(t^{-1}+L_{0})^{-1/2}|V|^{1/2}\Big\|_{L^{q_{0}}-L^{q_{0}}}
=\Big\||V|^{1/2}(t^{-1}+L_{0})^{-1/2}\Big\|_{L^{q_{0}'}-L^{q_{0}'}}
\leq C_{q_{0}'}\delta^{\frac{1}{2}}_{q_{0}}.
\end{eqnarray}
Similarly to (\ref{eq6-3}),
\begin{eqnarray}\label{eq101'}
\Big\||V|^{1/2}(t^{-1}+L_{0})^{-1/2}\Big\|_{L^{q_{0}}-L^{q_{0}}}\leq C_{q_{0}}\delta^{\frac{1}{2}}_{q_{0}},
\end{eqnarray}
where $z_{q_{0}}:=\|\nabla^{m}L^{-1/2}_{0}\|_{L^{q_{0}}-L^{q_{0}}}$, $h_{p}=h_{q_{0},\frac{Nq_{0}}{N-mq_{0}},\frac{N}{m}}$ and $C_{q_{0}}=c_{q_{0}}h_{q_{0}}s_{m,q_{0}}z_{q_{0}}$.

We denote  $A=\textbf{I}-V(t^{-1}+L_{0})^{-1}$, it is easy to see that the operator $A$ is well defined on $L^{q_{0}}$ by condition ($A_{2}$). Let $$I_{n}=(\textbf{I}+tL_{0})^{-1/2}(V(t^{-1}+L_{0})^{-1})^{n},$$ and $J_{n}=I_{n}A$. Notice that for each $n$, we write
\begin{eqnarray}
I_{n}&=&(\textbf{I}+tL_{0})^{-1/2}\Big(V(t^{-1}+L_{0})^{-1}\Big)\cdots\Big(V(t^{-1}+L_{0})^{-1}\Big)\nonumber\\
&=&(\textbf{I}+tL_{0})^{-1/2}|V|^{1/2} {\rm sign} V|V|^{1/2}(t^{-1}+L_{0})^{-1/2}\nonumber\\
&&\ \ \ \ \Big((t^{-1}+L_{0})^{-1/2}|V|^{1/2} {\rm sign} V|V|^{1/2}(t^{-1}+L_{0})^{-1/2}\Big)\cdots\nonumber\\
&&\ \ \ \ \ \ \Big((t^{-1}+L_{0})^{-1/2}|V|^{1/2} {\rm sign} V|V|^{1/2}(t^{-1}+L_{0})^{-1/2}\Big)(t^{-1}+L_{0})^{-1/2}.\nonumber
\end{eqnarray}
Thus we have for $n\geq1$,
\begin{eqnarray}\label{eq9'}
\|I_{n}\|_{L^{q_{0}}-L^{q_{0}}}
&\leq&\Big\|(\textbf{I}+tL_{0})^{-1/2}|V|^{1/2}
\Big\|_{L^{q_{0}}-L^{q_{0}}}\Big\||V|^{1/2}(t^{-1}+L_{0})^{-1/2}\Big\|^{n}_{L^{q_{0}}-L^{q_{0}}}\nonumber\\
&&\ \ \ \ \ \ \Big\|(t^{-1}+L_{0})^{-1/2}|V|^{1/2}\Big\|^{n-1}_{L^{q_{0}}-L^{q_{0}}}\Big\|(t^{-1}+L_{0})^{-1/2}\Big\|_{L^{q_{0}}-L^{q_{0}}}
\end{eqnarray}
Similarly to (\ref{eq6'}),
\begin{eqnarray}\label{eq13'}
\|(t^{-1}+L_{0})^{-1/2}\|_{L^{q_{0}}-L^{q_{0}}}\leq c_{q_{0}}t^{\frac{1}{2}}.
\end{eqnarray}
Then it follows from (\ref{eq10'})-(\ref{eq13'}) that
\begin{eqnarray}\label{eq14'}
\|I_{n}\|_{L^{p}-L^{p}}\leq c_{p}(C_{p}C_{p'}\delta_{p})^{n},
\end{eqnarray}
which means that $\sum^{\ell}_{n=0}I_{n}$ converges to an operator $T$ on $L^{q_{0}}$ if we choose  $\delta_{q_{0}}<(C_{q_{0}}C_{q_{0}'})^{-1}$. That is, $T=\sum^{\infty}_{n=0}I_{n}$ in the sense of $L^{q_{0}}$. Thus
\begin{eqnarray}
\Big\|TAf-\sum^{\ell}_{n=0}J_{n}f\Big\|_{L^{q_{0}}}
=\Big\|(T-\sum^{\ell}_{n=0}I_{n})Af\Big\|_{L^{q_{0}}}
\leq\Big\|T-\sum^{\ell}_{n=0}I_{n}\Big\|_{L^{q_{0}}-L^{q_{0}}}\|Af\|_{L^{q_{0}}}.\nonumber
\end{eqnarray}
hold for all $f\in L^{q_{0}}$, which implies that
$TAf=\lim_{\ell\rightarrow\infty}\sum^{\ell}_{n=0}J_{n}f$.
Now we claim that
\begin{eqnarray}\label{claim2-1-1'}
\sum^{\ell}_{n=0}(J_{n}f)\rightarrow (\textbf{I}+tL_{0})^{-1/2}f,  \ \ \ \ell\rightarrow\infty
\end{eqnarray}
in the sense of $L^{q_{0}}$. Once the claim (\ref{claim2-1-1'}) holds, we can write
\begin{eqnarray}\label{eq5'}
\nabla^{m}(\textbf{I}+tL)^{-1}&=&\nabla^{m}(\textbf{I}+tL_{0}+tV)^{-1}\nonumber\\
&=&\nabla^{m}(\textbf{I}+tL_{0})^{-1}(\textbf{I}-V(t^{-1}+L_{0})^{-1})^{-1}\nonumber\\
&=&\nabla^{m}(\textbf{I}+tL_{0})^{-1/2}(\textbf{I}+tL_{0})^{-1/2}(\textbf{I}-V(t^{-1}+L_{0})^{-1})^{-1}\nonumber\\
&=&\nabla(\textbf{I}+tL_{0})^{-1/2}\sum^{\infty}_{n=0}I_{n}.
\end{eqnarray}
It follows from  (\ref{eq6'}) and the condition ($A_{1}$) that for all $q\in (1,\infty)$
\begin{eqnarray}\label{eq7'}
\Big\|\nabla^{m}(\textbf{I}+tL_{0})^{-1/2}\Big\|_{L^{q_{0}}-L^{q_{0}}}
&\leq&
\Big\|\nabla^{m}L_{0}^{-1/2}\Big\|_{L^{q_{0}q}-L^{q_{0}}}\Big\|L_{0}^{1/2}(\textbf{I}+tL_{0})^{-1/2}\Big\|_{L^{q_{0}}-L^{q_{0}}}\nonumber\\
&\leq& c_{q_{0}}z_{q_{0}}t^{-\frac{1}{2}}
\end{eqnarray}
Therefore, by (\ref{eq5'}), (\ref{eq7'}) and choosing $\delta_{q_{0}}<(C_{q_{0}}C_{q_{0}'})^{-1}$ in (\ref{eq14'}), we have
\begin{eqnarray}\label{eq6-4}
\|\nabla^{m}(\textbf{I}+tL)^{-1}\|_{L^{q_{0}}-L^{q_{0}}}\leq Ct^{-\frac{1}{2}}.
\end{eqnarray}

If $q_{0}\leq \frac{2N}{N-2m}$, it follows from (\ref{eq6-4}) and the fact that $(tL)^{k}e^{-tL}$ ($k\in \mathbb{N}_{0}$) is bounded on $L^{q_{0}}$ that
\begin{eqnarray}\label{eq15''}
\|\nabla^{m} e^{-tL}\|_{L^{q_{0}}-L^{q_{0}}}\leq \|\nabla^{m} (\textbf{I}+tL)^{-1}\|_{L^{q_{0}}-L^{q_{0}}}\|(\textbf{I}+tL)e^{-tL}\|_{L^{q_{0}}-L^{q_{0}}}\leq Ct^{-\frac{1}{2}}.\nonumber
\end{eqnarray}
However, when
$\frac{2N}{N-2m}<p$, we need more sophisticated discussion. First of all, it follows from the fact that $\nabla^{m} L^{-\frac{1}{2}}$ is bounded on $L^{2}$ and the functional calculi of $L$ on $L^{2}$ that
\begin{eqnarray}\label{eq6-5}
\|\nabla^{m} (\textbf{I}+tL)^{-1}\|_{L^{2}-L^{2}}\leq \|\nabla^{m}L^{-\frac{1}{2}}\|_{L^{2}-L^{2}}\|L^{\frac{1}{2}}(\textbf{I}+tL)^{-1}\|_{L^{2}-L^{2}}\leq Ct^{-\frac{1}{2}}.
\end{eqnarray}
By interpolating (\ref{eq6-4})  with (\ref{eq6-5}), we obtain for all $2\leq r\leq q_{0}$
\begin{eqnarray}\label{eq6-6}
\|\nabla^{m} (\textbf{I}+tL)^{-1}\|_{L^{r}-L^{r}}\leq Ct^{-\frac{1}{2}}.
\end{eqnarray}
Notice that $e^{-tL}$ and $tLe^{-tL}$  are bounded on $L^{r}$ for all $2\leq r\leq \frac{2N}{N-2m}$ (see Propositions \ref{pro1'}, \ref{pro22} and Remark \ref{rem4-2}), then for all $2\leq r<\frac{2N}{N-2m}<q_{0}$, we have
\begin{eqnarray}\label{eq6-7}
\|\nabla^{m} e^{-tL}\|_{L^{r}-L^{r}}\leq \|\nabla^{m} (\textbf{I}+tL)^{-1}\|_{L^{r}-L^{r}}\|(\textbf{I}+tL)e^{-tL}\|_{L^{r}-L^{r}}\leq Ct^{-\frac{1}{2}},
\end{eqnarray}
which combined Lemma \ref{le005} that $e^{-tL}$ is bounded on $L^{r}$ for all $2\leq r<(\frac{2N}{N-2m})^{\ast}$ where $r^{\ast}=\frac{Nr}{N-mr}$ for all $1<r<\frac{N}{m}$. Moreover, by Proposition \ref{pro1'} and the identity $tLe^{-tL}=2e^{-\frac{t}{2}L}\frac{t}{2}Le^{-\frac{t}{2}L}$, we have that $tLe^{-tL}$ is bounded on $L^{r}$ for all $2\leq r<(\frac{2N}{N-2m})^{\ast}$.
Therefore, for all $2\leq r<(\frac{2N}{N-2m})^{\ast}$ (we assume that $(\frac{2N}{N-2m})^{\ast}<q_{0}$, otherwise the proof would be finished), it follows from (\ref{eq6-7}) that
\begin{eqnarray}
\|\nabla^{m} e^{-tL}\|_{L^{r}-L^{r}}\leq \|\nabla^{m} (\textbf{I}+tL)^{-1}\|_{L^{r}-L^{r}}\|(\textbf{I}+tL)e^{-tL}\|_{L^{r}-L^{r}}\leq Ct^{-\frac{1}{2}}.\nonumber
\end{eqnarray}
 Now let $r_{0}\in (2,\frac{2N}{N-2m})$ be chosen later and
 $r_{j}=r_{j-1}^{\ast}=\frac{nr_{j-1}}{n-r_{j-1}}$, we can find suitable
 $r_{0}\in (2,\frac{2N}{N-2m})$ and a integer $j_{0}$ such that $q_{0}=r_{j_{0}}<\frac{N}{m}$.
Then by the same procedure as above, we have that $e^{-tL}$ and $tLe^{-tL}$ are bounded on $L^{r}$ for all $2\leq r< q_{0}^{*}$, which combined (\ref{eq6-6}) implies that  $\sqrt{t}\nabla^{m} e^{-tL}$ is bounded on $L^{q_{0}}$.

It remains to prove (\ref{claim2-1-1'}). For every $f\in L^{q_{0}}$,
\begin{eqnarray}
&&\Big\|\sum^{\ell}_{n=0}J_{n}f-(\textbf{I}+tL_{0})^{-1/2}f\Big\|_{L^{q_{0}}}\nonumber\\
&=&\Big\|I_{\ell+1}f\Big\|_{L^{q_{0}}}
\leq\Big\|(\textbf{I}+tL_{0})^{-1/2}|V|^{1/2}\Big\|_{L^{q_{0}}-L^{q_{0}}}\Big\||V|^{1/2}(t^{-1}+L_{0})^{-1/2}\Big\|^{\ell+1}_{L^{q_{0}}-L^{q_{0}}}\nonumber\\
&&\ \ \ \ \ \ \Big\|(t^{-1}+L_{0})^{-1/2}|V|^{1/2}\Big\|^{\ell}_{L^{q_{0}}-L^{q_{0}}}\Big\|(t^{-1}+L_{0})^{-1/2}f\Big\|_{L^{q_{0}}}\nonumber\\
&\leq&C(C_{q_{0}}C_{q_{0}'}\delta_{q_{0}})^{\ell+1}\|f\|_{L^{q_{0}}},\nonumber
\end{eqnarray}
which implies (\ref{claim2-1-1'}) by choosing $\delta_{q_{0}}<(C_{q_{0}}C_{q_{0}'})^{-1}$. Hence we finish the proof.
\vskip0.2cm

\begin{rem}\label{rem-ext-1'}
{\rm It follows from the proof above that the constant $\delta_{q_{0}}$ can be expressed explicitly by
$\delta_{q_{0}}<(C_{q'_{0}}C_{q_{0}})^{-1}$  where
$$C_{q_{0}'}=c_{q_{0}'}h_{q'_{0},\frac{Nq'_{0}}{N-q'_{0}},N}s_{q'_{0}}\alpha_{q'_{0}}\ \ \ \ \text{and}
\ \ \ \ \ \ \ C_{q_{0}}=c_{q_{0}}h_{q_{0},\frac{Nq_{0}}{N-q_{0}},N}s_{q_{0}}\alpha_{q_{0}}.$$
Moreover, $C_{p}$ ($p\in (q'_{0},q_{0})$) can be obtained by interpolating all the constants in $C_{p_{0}'}$ and $C_{p_{0}}$.
}
\end{rem}
\vskip 0.3cm

Now we turn to prove Theorem \ref{ethm2-2-0}. Let $L=L_{0}+V\in\mathscr{A}_{m}$ with extra conditions ($A_{1}$) and ($A_{2}$), if we further assume that the $L^{\frac{N}{2m},\infty}$ norm of  $V$ is small, then the boundedness of $\nabla^{m}L^{-1/2}$ on $L^{q}$ for some $q>2$ can be obtained.

\vskip 0.2cm

{\bf \emph{The proof of Theorem \ref{ethm2-2-0}:}}
Assume that the condition ($A_{1}$) and ($A_{2}$) are satisfied for $q_{0}\in (2,\frac{N}{m})$, it follows from Theorem \ref{ethm2}, Lemma \ref{le005} and duality that $e^{-tL}$ is bounded on $L^{p}$ for all $p\in ((q_{0}^{*})',q_{0}^{*})$ where $q_{0}^{\ast}=\frac{Nq_{0}}{N-mq_{0}}$ and $1/(q_{0}^{*})'+1/q_{0}^{*}=1$. Then by Propositions \ref{pro1'} and \ref{prop-2}, we have that $\nabla^{m}L^{-\frac{1}{2}}$ is bounded on $L^{q}(\mathbb{R}^{N})$ for all $q\in ((q_{0}^{*})',2]$.

On the other hand, we proved in Theorem \ref{ethm2} that $\sqrt{t}\nabla^{m}e^{-tL}$ is bounded on $L^{q_{0}}$. Then it follows from  Proposition \ref{le52} and (ii) of Proposition \ref{pro31} that $\nabla^{m}L^{-\frac{1}{2}}$ is bounded on $L^{q}$ for all $q\in[2,q_{0})$. Hence we finish the proof.

\section{Applications}

In this section,
we mainly concern the Riesz transform associated to operators which belong to a special subclass of $\mathscr{A}_{m}$.
Let $P(D)$ be a homogeneous elliptic operator of order $2m$ with constant real coefficients, in particular, $P(D)=(-\Delta)^{m}$. Denote by  where
$$\mathscr{B}_{m}=\bigcup_{\rho,\Lambda,\mu}\{L=L_{0}+V\in\mathscr{A}_{m}(\rho,\Lambda,\mu);\ L_{0}=P(D)\}:=\bigcup_{\rho,\Lambda,\mu}\mathscr{B}_{m}(\rho,\Lambda,\mu).$$
By applying Theorems \ref{thm4}, \ref{thm5} and \ref{ethm2-2-0},  we have the following results.

\begin{thm}\label{thm-R0}
Let $m\geq2$, then the following results hold.

{\rm (i)}\ Assume that $L\in\mathscr{B}_{m}$, then $\nabla^{m}L^{-1/2}$ is bounded on $L^{q}(\mathbb{R}^{N})$ for all $q\in (\frac{2N}{N+2m}\vee1,2]$.

{\rm (ii)}\ Assume that $L\in\mathscr{B}_{m}(\rho,\Lambda,\mu)$ and $V\in L^{\frac{N}{2m},\infty}$, then $\nabla^{m}L^{-1/2}$ is bounded on $L^{q}(\mathbb{R}^{N})$ for $q\in(\frac{(2+\varepsilon)N}{(1+\varepsilon)N+(2+\varepsilon)m},2+\varepsilon)$ where $\varepsilon$ is a positive constant.

{\rm (iii)}\ Let $N>2m$ and $L=P(D)+V$. Assume that ($A_{2}$) holds for some $q_{0}\in (2,\frac{N}{m})$. Then there exist a constant $\delta_{q_{0}}>0$ depending on $q_{0}$ such that when
\begin{eqnarray}\label{eq1-1-1'}
\|V\|_{L^{\frac{N}{2m},\infty}}\leq \delta_{q_{0}},
\end{eqnarray}
$\nabla^{m}L^{-1/2}$ is bounded on $L^{q}(\mathbb{R}^{N})$ for all $\frac{Nq_{0}'}{N+mq_{0}'}<q<q_{0}$.
\end{thm}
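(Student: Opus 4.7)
The plan is to derive all three parts of Theorem \ref{thm-R0} as corollaries of Theorems \ref{thm4}, \ref{thm5}, and \ref{ethm2-2-0} by exploiting the inclusion $\mathscr{B}_m \subset \mathscr{A}_m$ and the fact that constant-coefficient principal parts $P(D)$ make the hypothesis $(A_1)$ automatic. Parts (i) and (ii) are essentially restatements: since $\mathscr{B}_m \subset \mathscr{A}_m$ by construction, every $L \in \mathscr{B}_m$ meets the hypotheses of Theorem \ref{thm4}, immediately giving the $L^q$ boundedness of $\nabla^m L^{-1/2}$ for $q \in (\tfrac{2N}{N+2m}\vee 1, 2]$; and similarly $\mathscr{B}_m(\rho,\Lambda,\mu) \subset \mathscr{A}_m(\rho,\Lambda,\mu)$, so the additional assumption $V \in L^{N/(2m),\infty}$ places us directly under Theorem \ref{thm5}, yielding the boundedness on the interval $(\tfrac{(2+\varepsilon)N}{(1+\varepsilon)N+(2+\varepsilon)m}, 2+\varepsilon)$.

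For part (iii), the core observation is that when $L_0 = P(D)$ is a homogeneous elliptic operator of order $2m$ with constant real coefficients, the Riesz transform $\nabla^m L_0^{-1/2}$ is a Fourier multiplier whose symbol is smooth and homogeneous of degree $0$ on $\mathbb{R}^N \setminus \{0\}$, hence satisfies the Mikhlin-H\"ormander condition and is bounded on $L^{q_0}$ for every $q_0 \in (1,\infty)$. Consequently, $(A_1)$ is automatic for every $q_0 > 2$, so the only nontrivial assumption left from Theorem \ref{ethm2-2-0} is $(A_2)$, which is taken as hypothesis. I would then note that by choosing $\delta_{q_0}$ small enough (no larger than the threshold $\Theta$ in Remark \ref{rem4-2}), the smallness condition (\ref{eq1-1-1'}) also guarantees $L = P(D) + V \in \mathscr{A}_m(\rho,\Lambda,\mu) \subset \mathscr{B}_m$, so all prerequisites of Theorem \ref{ethm2-2-0} are met and the conclusion follows verbatim on the interval $(\tfrac{Nq_0'}{N+mq_0'}, q_0)$.

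Since everything is a direct specialization, there is no genuinely difficult step; the only place that requires a moment of care is the verification that the single smallness constant $\delta_{q_0}$ in part (iii) simultaneously guarantees membership in $\mathscr{A}_m$ (via Remark \ref{rem4-2}) and the smallness inequality used inside the proof of Theorem \ref{ethm2-2-0} (compare Remark \ref{rem-ext-1'}). One simply takes $\delta_{q_0}$ to be the minimum of the two thresholds. No new estimates or perturbation arguments need to be produced at this stage.
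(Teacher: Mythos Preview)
Your proposal is correct and follows essentially the same approach as the paper: parts (i) and (ii) are obtained directly from Theorems \ref{thm4} and \ref{thm5} via the inclusion $\mathscr{B}_m\subset\mathscr{A}_m$, and for part (iii) the paper likewise invokes Remark \ref{rem4-2} to ensure $L\in\mathscr{A}_m$ for small $\delta_{q_0}$, notes (via Remark \ref{rem1-3}) that $(A_1)$ is automatic when $L_0=P(D)$, and then applies Theorem \ref{ethm2-2-0}. Your additional remark about taking $\delta_{q_0}$ as the minimum of the two thresholds is a welcome clarification but not a departure from the paper's argument.
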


\begin{proof}Since the conclusions (i) and (ii) can be obtained directly by using Theorems  \ref{thm4} and \ref{thm5} respectively,
we only need to prove (iii). In fact,
by  Remark \ref{rem4-2}, it is easy to see that $L\in\mathscr{B}_{m}$ if one choose $\delta_{q_{0}}$ appropriately in (\ref{eq1-1-1'}). On the other hand, it follows from Remark \ref{rem1-3} that if $L_{0}=P(D)$, the condition ($A_{1}$) holds for all $q_{0}\in (2, \infty)$. Then we can finish the proof by applying Theorem  \ref{ethm2-2-0}.
\end{proof}

\begin{rem}
{\rm (i)\  In statement (iii) of Theorem \ref{thm-R0}, if $N>4m$, $L_{0}=P(D)$ and $V\in L^{\frac{N}{2m},\infty}$, the condition ($A_{2}$) holds for all $q_{0}\in (2,\frac{N}{2m})$ automatically (see (ii) of Remark \ref{rem1-3}).

(ii)\ If $m=1$, $N\geq3$ and $L\in\mathscr{B}_{1}(\rho,\Lambda,\mu)$. By using the positivity  and contractivity of $e^{-t(P(D)+V_{+})}$,  Assaad \cite{As} proved that $\nabla L^{-1/2}$ is bounded on $L^q$ for all $(\frac{2N}{N+2+(N-2)\sqrt{1-\mu}},2]$ and the lower bound  is sharp. However, when $N>2m\geq4$, it is still unknown to us  if the interval in (i) of Theorem \ref{thm-R0} is sharp (with respect to $\mathscr{B}_{m}(\rho,\Lambda,\mu)$ or $\mathscr{B}_{m}$).
}
\end{rem}
\vskip0.2cm

At last, we treat the higher order Schr\"{o}dinger operators $L=(-\Delta)^{m}-\gamma|x|^{-2m}$. Let $\rho$ be the constant such that (\ref{strong}) holds with $L_{0}=(-\Delta)^{m}$ and $\kappa(m,N)$ is defined by (\ref{H-R}).

\begin{cor}\label{cor2}
Let $m\geq2$ and $L=(-\Delta)^{m}-\gamma|x|^{-2m}$, the following statements hold.

{\rm(i)}\ Let $N>2m$ and $\gamma<\frac{\rho}{\kappa(m,N)}$. Then the Riesz transform $\nabla^{m}L^{-1/2}$ is bounded on $L^{q}(\mathbb{R}^{N})$ for $q\in(\frac{(2+\varepsilon)N}{(1+\varepsilon)N+(2+\varepsilon)m},2+\varepsilon)$ where  $\varepsilon$ is a positive constant.

{\rm(ii)}\ Let $N>4m$. Then  there exist a  constant $\delta>0$ such that when $\gamma<\delta$, the Riesz transform $\nabla^{m}L^{-1/2}$ is bounded on $L^{q}(\mathbb{R}^{N})$ for all $\frac{N}{N-m}<q<\frac{N}{2m}$.
\end{cor}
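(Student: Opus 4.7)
Both parts reduce to invoking earlier theorems for the concrete potential $V(x) = -\gamma|x|^{-2m}$, for which $V_+ \equiv 0$, $V_- = \gamma|x|^{-2m}$, and $|x|^{-2m} \in L^{N/(2m),\infty}(\mathbb{R}^N)$ with norm a purely dimensional constant $C_{N,m}$. For (i), I plan to apply Theorem \ref{thm5}; the one point to check is that $L \in \mathscr{A}_m(\rho,\Lambda,\mu)$ with $\mu < 1$. Combining the Hardy--Rellich inequality (\ref{H-R}) with the G{\aa}rding bound (\ref{strong}) for $L_0 = (-\Delta)^m$ gives
$$\int_{\mathbb{R}^N} V_-|f|^2\,dx \;\leq\; \gamma\kappa(m,N)\|\nabla^m f\|_{L^2}^2 \;\leq\; \frac{\gamma\kappa(m,N)}{\rho}\,\mathcal{Q}_0(f,f),$$
so the strongly subcritical condition (\ref{condition}) holds with $\mu = \gamma\kappa(m,N)/\rho$, which is $<1$ exactly under $\gamma < \rho/\kappa(m,N)$. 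Since $V \in L^{N/(2m),\infty}$, Theorem \ref{thm5} then delivers (i).

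For (ii), the target is Theorem \ref{ethm2-2-0}. Taking $\delta$ smaller than the threshold in (i), part (i) ensures $L \in \mathscr{A}_m$. Because $L_0 = (-\Delta)^m$ has smooth constant coefficients, the Mikhlin multiplier theorem supplies $(A_1)$ for every $q_0 \in (1,\infty)$; and under $N > 4m$ with $V \in L^{N/(2m),\infty}$, Remark \ref{rem1-3}(ii) (weak-type H\"older plus Sobolev embedding) furnishes $(A_2)$ for every $q_0 \in (2, N/(2m))$. Theorem \ref{ethm2-2-0} then produces boundedness on $L^q$ for $\frac{Nq_0'}{N+mq_0'} < q < q_0$ whenever $\gamma C_{N,m} \leq \delta_{q_0}$. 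A direct calculation shows
$$\lim_{q_0 \uparrow N/(2m)} \frac{Nq_0'}{N+mq_0'} \;=\; \frac{N}{N-m},$$
so as $q_0 \uparrow N/(2m)$ the corresponding intervals sweep out $\bigl(\tfrac{N}{N-m},\tfrac{N}{2m}\bigr)$.

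The main obstacle is extracting a \emph{single} threshold $\delta > 0$ valid simultaneously across that whole interval, because $\delta_{q_0}$ in Theorem \ref{ethm2-2-0} a priori depends on $q_0$. I would resolve this by inspecting the explicit form of $\delta_{q_0}$ in Remark \ref{rem-ext-1'}: the Sobolev, weak-H\"older, $H^\infty$-calculus and Riesz-transform constants for the fixed operator $L_0 = (-\Delta)^m$ all remain bounded as $q_0$ ranges over any compact subinterval of $(1,N/m)$, and in particular as $q_0 \uparrow N/(2m) < N/m$. Consequently $\inf_{q_0 \in [q^*, N/(2m))} \delta_{q_0} > 0$ for any fixed $q^* \in (2, N/(2m))$, and $\delta$ may be taken to be this infimum divided by $C_{N,m}$ (capped at $\rho/\kappa(m,N)$ to preserve (i)), producing the uniform threshold asserted in the corollary.
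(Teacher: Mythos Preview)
Your proposal is correct and follows essentially the same route as the paper. For (i) both you and the paper verify membership in $\mathscr{A}_m$ via the Hardy--Rellich inequality and then invoke Theorem~\ref{thm5}; for (ii) both check $(A_1)$ and $(A_2)$ for $L_0=(-\Delta)^m$ and $q_0\in(2,N/(2m))$, apply Theorem~\ref{ethm2-2-0}, and then argue that $\delta_{q_0}$ stays bounded away from zero as $q_0\uparrow N/(2m)$ using the explicit description in Remark~\ref{rem-ext-1'}. The only cosmetic differences are that the paper verifies $(A_2)$ via the $L^p$-Rellich inequality of Davies--Hinz directly rather than through Remark~\ref{rem1-3}(ii), and phrases the uniformity argument as an interpolation of the endpoint constants $C_2$ and $C_{N/(2m)}$ rather than as boundedness on compact subintervals of $(1,N/m)$; these are equivalent justifications of the same fact.
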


\begin{proof} The  conclusion (i) is actually proved in (ii) of Remark \ref{rem1-1},
we only need to prove (ii). To this end, notice that by Davies and Hinz \cite[Corollary 14]{D-H},   there exist a constant $K(m,N,p)$ such that for all $1<p<\frac{N}{2m}$
\begin{eqnarray}\label{eq1-2}
\|f(x)|x|^{-2m}\|_{L^{p}}\leq K(m,N,p)\|(-\Delta)^{m}f\|_{L^{p}},
\end{eqnarray}
which means that the condition  ($A_{2}$) holds for all $2<q_{0}<\frac{N}{2m}$ when $N>4m$. Thus by (iii) of Theorem \ref{thm-R0}, there exist a small constant $\overline{\delta}_{q_{0}}$ such that when
$$\|V\|_{L^{\frac{N}{2m},\infty}}=\gamma \||x|^{-2m}\|_{L^{\frac{N}{2m},\infty}}<\overline{\delta}_{q_{0}},$$
i.e. $\gamma<\delta_{q_{0}}:=\overline{\delta}_{q_{0}} \||x|^{-2m}\|^{-1}_{L^{\frac{N}{2m},\infty}}$, the Riesz transform $\nabla^{m}L^{-1/2}$ is bounded on $L^{q}(\mathbb{R}^{N})$ for all $q\in (\frac{Nq_{0}'}{N+mpq_{0}'},q_{0})$.

It remains to show that $\delta_{q_{0}}$ is uniformly bounded for all $q_{0}\in (2,\frac{N}{2m})$. In fact, it follows from Remark \ref{rem4-2} and the proof of Theorem \ref{ethm2} that
$\overline{\delta}_{q_{0}}<\min\{\Theta, (C_{q_{0}}C_{q_{0}'})^{-1}\}$ where $\Theta$ is defined as (\ref{extra10}),
 $$C_{q_{0}'}=c_{q_{0}'}h_{q'_{0},\frac{Nq'_{0}}{N-q'_{0}},N}s_{q'_{0}}\alpha_{q'_{0}}\ \ \ \ \text{and}
\ \ \ \ \ \ C_{q_{0}}=c_{q_{0}}h_{q_{0},\frac{Nq_{0}}{N-q_{0}},N}s_{q_{0}}\alpha_{q_{0}}.$$
On the other hand, it is easy to see that $C_{2}$ and $C_{\frac{N}{2m}}$ is finite. Then by Remark \ref{rem-ext-1'}, we have that
$\overline{\delta}_{q_{0}}$ is uniformly
bounded for all $2< p_{0}<\frac{N}{2m}$, which finishes the proof.
\end{proof}

\begin{rem}{\rm (i)\ When $m=1$, recently Hassel and Lin \cite{H-L} have obtained the sharp interval for the boundedness of $\nabla L^{-1/2}$ on $L^{q}$ based on a different method. Moreover, Killip et al. \cite{KMVZZ} studied the generalized Riesz transform and obtained
\begin{eqnarray}
\big\|(-\Delta)^{s/2}\big\|_{L^q}\leq C\big\|L^{s/2}f\big\|_{L^{q}}\nonumber
\end{eqnarray}
for some $q$ by using the heat kernel estimates and Littlewood-Paley argument. However,  our corollary can clearly  deal with higher order case.

(ii)\  When $m=2$, that is, $L=\Delta^{2}-\gamma|x|^{-4}$, Gregorio \cite{Gre} obtained the $L^{q}$ bounededness of $\Delta L^{-1/2}$ for $q\leq 2$.

}
\end{rem}

\bigskip

\vspace{1cm}
{\bf Qingquan Deng}\\
Department of Mathematics \\
Hubei Key Laboratory of Mathematical Science\\
Central China Normal University\\
Wuhan 430079, China \\
E-mail: {\it dengq@mail.ccnu.edu.cn} \vskip 0.5cm

\vskip 0.2cm
{\bf Yong Ding}\\
School of Mathematical Sciences\\
Laboratory of Mathematics and Complex Systems (BNU)\\
Ministry
of Education 
Beijing Normal University\\
Beijing 100875, China \\
E-mail: {\it dingy@bnu.edu.cn} 
\vskip 0.2cm 

{\bf Xiaohua Yao}\\
Department of Mathematics \\
Hubei Key Laboratory of Mathematical Science\\
Central China Normal University\\
Wuhan 430079, China \\
E-mail: {\it yaoxiaohua@mail.ccnu.edu.cn}

\end{document}